\documentclass[11pt,a4paper]{article}
\usepackage[T1]{fontenc}
\usepackage[utf8]{inputenc}
\usepackage{lmodern}
\usepackage{amsmath,amssymb,amsthm,mathtools}
\usepackage[margin=28mm]{geometry}
\usepackage{microtype}
\usepackage{enumitem}
\usepackage[colorlinks=true,linkcolor=blue,citecolor=blue,urlcolor=blue]{hyperref}
\usepackage{bookmark}
\numberwithin{equation}{section}
\newtheorem{theorem}{Theorem}[section]
\newtheorem{proposition}[theorem]{Proposition}
\newtheorem{lemma}[theorem]{Lemma}
\newtheorem{corollary}[theorem]{Corollary}
\theoremstyle{definition}
\newtheorem{definition}[theorem]{Definition}
\theoremstyle{remark}
\newtheorem{remark}[theorem]{Remark}
\newcommand{\R}{\mathbb{R}}
\newcommand{\Z}{\mathbb{Z}}
\newcommand{\T}{\mathbb{T}}

\DeclareMathOperator{\Corr}{Corr}

\DeclareMathOperator{\Lip}{Lip}

\setlist[enumerate]{label=\textup{(\roman*)},leftmargin=*,itemsep=3pt}
\hypersetup{
  pdftitle={A Smooth Autonomous Flow with Exponential Mixing on the Three-Torus},
  pdfauthor={Nianci Jiang and Weili Zhang}
}
\title{A Smooth Autonomous Flow with Exponential Mixing on the Three-Torus}
\author{
Nianci Jiang$^{1,2}$ and Weili Zhang$^{3}$\\[1em]
{\small $^{1}$School of Mathematical Sciences,
University of Chinese Academy of Sciences,
Beijing 100049, China}\\
{\small $^{2}$Academy of Mathematics and Systems Science,
Chinese Academy of Sciences,
Beijing 100190, China}\\
{\small $^{3}$Department of Mathematical Sciences,
Tsinghua University, Beijing 100084, China}\\[0.5em]
}
\date{}
\begin{document}
\maketitle
\begingroup
\renewcommand{\thefootnote}{}
\footnotetext{%
\begin{tabular}[t]{@{}l@{}}
\texttt{jiangnianci22@mails.ucas.ac.cn}\\
\texttt{zhangweili@amss.ac.cn}
\end{tabular}%
}
\endgroup

\vspace{-0.5cm}
\begin{abstract}
We construct a smooth autonomous divergence-free vector field on $\mathbb{T}^3$ whose flow is exponentially mixing with respect to Lebesgue measure. This answers an open question about smooth autonomous exponential mixing in three dimensions. Starting from an exponentially mixing Anosov suspension flow, we perform surgery near four periodic orbits and show that the resulting manifold is diffeomorphic to $\mathbb{T}^3$. The resulting vector field has exactly sixteen hyperbolic equilibria. We also prove a general transfer theorem that preserves exponential mixing under certain singular changes of coordinates.
\end{abstract}

\section{Introduction and main results}\label{sec:introduction}

Mixing by incompressible flows is a basic mechanism in fluid dynamics.
A velocity field transports and deforms material elements by repeated
stretching and folding, thereby transferring large-scale variations of a passive scalar to progressively finer spatial scales. From the viewpoint of dynamical systems, a standard way to quantify mixing is through the decay of correlations between observables.

Let $\varphi^t$ be a flow on a compact manifold $M$ preserving a probability measure $\mu$. For $f,g\in L^2(M,\mu)$, define their correlation at time $t$ by
\begin{equation}\label{eq:intro-correlation}
\operatorname{Corr}_{\mu,\varphi}(f,g;t)
:=
\int_M f(x)g(\varphi^t(x))\,d\mu(x)
-
\left(\int_M f\,d\mu\right)
\left(\int_M g\,d\mu\right).
\end{equation}
Its decay measures how rapidly correlations between observables are lost under the flow. Typical rates
include polynomial, stretched-exponential, and exponential decay. Exponential mixing is one of the strongest commonly studied quantitative mixing properties in smooth dynamics.

\begin{definition}\label{def:exponential-mixing}
Let $0<\alpha<1$. The flow $\varphi^t$ is called
\emph{exponentially mixing for $C^\alpha$ observables} if there exist
constants $C_\alpha,c_\alpha>0$ such that 
\begin{equation}\label{eq:intro-exponential}
\left|
\operatorname{Corr}_{\mu,\varphi}(f,g;t)
\right|
\le
C_\alpha e^{-c_\alpha t}
\|f\|_{C^\alpha(M)}
\|g\|_{C^\alpha(M)}
\end{equation}
for every $f,g\in C^\alpha(M)$ and every $t\ge0$.
\end{definition}

\begin{remark}\label{rem:regularity-formulations}
For \(h\in C^\alpha(M)\), we use the seminorm and norm 
\[
[h]_{C^\alpha}
:=
\sup_{x\neq y}
\frac{|h(x)-h(y)|}{d_{M}(x,y)^\alpha},
\qquad
\|h\|_{C^\alpha}
:=
\|h\|_{L^\infty}+[h]_{C^\alpha},
\]
where \(d_M\) denotes the Riemannian distance induced by a fixed smooth Riemannian metric on \(M\). Different regularity classes are used in the literature to formulate exponential mixing. We adopt the H\"older formulation used for Anosov flows in \cite{TZ}. In the passive-scalar literature, one often works instead with Sobolev observables. On $\mathbb T^3$ with Lebesgue measure, exponential correlation decay for H\"older observables implies exponential correlation decay for $H^s$ observables for every $s>0$, possibly with a smaller decay rate. We prove the version needed here in Appendix~\ref{app:regularity-transfer}.
\end{remark}
An interesting problem is whether exponential mixing can be achieved under strong regularity assumptions on the velocity field.

This question appears explicitly in several forms in the literature. In
\cite{ElgindiZlatos}, the authors asked whether one can construct an incompressible velocity field on the flat torus $\mathbb T^d$, $d\ge3$, which is smooth uniformly in time and whose flow is an exponential mixer. More recently, \cite{CoopermanIyerNolen} identified the existence of a \(C^1\), time-independent, divergence-free exponentially mixing vector field on \(\mathbb T^3\) as an open problem. The latter is an autonomous version of the problem and is the one addressed in this paper.

Several related constructions achieve exponential decay of mixing scales for suitably chosen initial data, or use velocity fields that may depend on
the initial datum \cite{AlbertiCrippaMazzucato,YaoZlatos}. Other results provide exponential mixing for a fixed velocity field, but do not satisfy all of the requirements above: existing examples either have lower spatial regularity \cite{ElgindiZlatos}, are deterministic but time-dependent \cite{ElgindiLissMattingly}, or are based on random time-dependent dynamics
\cite{BlumenthalCotiZelatiGvalani,Cooperman,CotiZelatiNavarroFernandez,JiangZhang,
Melzi,ZhangHamiltonian}.

The autonomous case is more restrictive. In dimension two, there are strong obstructions to exponential mixing for smooth area-preserving autonomous flows; see \cite{DKRH,Young}. In dimension three, smooth volume-preserving mixing flows are known \cite{HuPesinTalitskaya}, but this result does not provide exponential decay of correlations.

Consequently, the existence of a smooth autonomous divergence-free exponentially mixing flow on $\mathbb T^3$ remained open. The main result of this paper provides such a flow. To the best of our knowledge, this is the first construction.

\begin{theorem}\label{thm:main}
Let $m$ be the normalized Lebesgue measure on $\mathbb T^3$.
There exists a $C^\infty$ autonomous divergence-free vector field $X$ on $\mathbb T^3$ whose flow is exponentially mixing with respect to \(m\) for every \(0<\alpha<1\), in the sense of Definition~\ref{def:exponential-mixing}. Moreover, $X$ has exactly sixteen hyperbolic equilibria.
\end{theorem}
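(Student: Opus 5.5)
The plan follows the strategy announced in the abstract. We start from a hyperbolic toral automorphism $A\in SL(2,\Z)$, say $A=\begin{pmatrix}2&1\\1&1\end{pmatrix}$, and consider a suspension flow over $A$ with a smooth non-constant roof function. Choosing the roof so that the flow is a contact-type or otherwise non-jointly-integrable Anosov flow, the results of \cite{TZ} (and Dolgopyat-type estimates) give exponential mixing for H\"older observables with respect to the smooth invariant volume. The underlying manifold, however, is the mapping torus $M_A$, a Sol-manifold, not $\T^3$.

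The topological step is to perform surgery near four periodic orbits. We choose four periodic orbits $\gamma_1,\dots,\gamma_4$ of the suspension flow, for instance orbits through fixed or low-period points of $A$. In tubular neighbourhoods $U_i$, we replace the flow by a local model: we remove a solid torus and reglue it with a twisted framing, in the spirit of Dehn surgery. Inside each new solid torus we insert a divergence-free vector field. This field is built from an explicit local model containing four hyperbolic saddle equilibria, giving sixteen in total, and it matches the original flow smoothly near $\partial U_i$. We then verify, by computing $H_1$ and identifying the resulting manifold via its fundamental group or a Heegaard or torus-bundle description, that the surgery coefficients can be chosen so that the result is diffeomorphic to $\T^3$. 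Essentially we undo the monodromy $A$ by surgeries, using that $A$ factors into Dehn twists along curves realized by periodic orbits. Finally we adjust the volume form: by Moser's theorem, the invariant smooth volume of the new flow is pushed to normalized Lebesgue measure $m$ by a diffeomorphism. This yields a divergence-free $X$ on $\T^3$.

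The analytic step is the transfer theorem mentioned in the abstract. Outside the small surgery regions, the new flow coincides, up to a coordinate change that is singular only on the removed orbits, with a time change of the Anosov flow. We first show that the surgered flow is measurably conjugate to the original exponentially mixing flow via a map $\Phi$ that is smooth away from the periodic orbits and has at most polynomial blow-up of derivatives near them. Alternatively, $\Phi$ conjugates it to a reparametrization with a roof function that has mild singularities. We then prove that exponential mixing passes through such $\Phi$. Given $f,g\in C^\alpha$, we cut off a $\delta$-neighbourhood of the singular set, of measure $O(\delta^2)$. On the complement, $f\circ\Phi^{-1}$ is H\"older with norm $O(\delta^{-N})$. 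We then apply exponential mixing and optimize with $\delta=e^{-\epsilon t}$. For the time change near the equilibria, we use that orbits passing close to a hyperbolic saddle spend logarithmic time there. This makes the induced roof function have logarithmic singularities with exponential tails, which is compatible with the Young-tower or Dolgopyat approach. The last point is the delicate part.

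I expect the main obstacle to be this transfer of exponential mixing across the slowdown near the sixteen equilibria. Time changes by unbounded (logarithmic) roof functions can destroy exponential mixing, and the non-integrability condition (UNI) must survive. I would first try to arrange the surgery so that the new flow equals $\Phi_*$ of a time change $\tau\cdot Y$ of the Anosov field $Y$, where $\tau$ vanishes to finite order only on the four periodic orbits. I would then prove exponential mixing of this degenerate time change directly. To do so, I would induce on a cross-section away from the orbits and show the return-time has exponential tails, which follows since the time spent near a saddle is $\sim|\log d|$ and $m(d<r)\sim r^2$. A twisted transfer operator or Dolgopyat estimate, with UNI inherited from the original flow on the unperturbed region, then gives the result.
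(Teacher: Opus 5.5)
The genuine gap is the topological step, which has no working mechanism. Your cutoff argument (remove a $\delta$-neighbourhood of the singular orbits, use mass $O(\delta^2)$ and polynomial blow-up of $D\Phi$, take $\delta=e^{-\epsilon t}$) and the final Moser normalization do match the paper.

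Periodic orbits of a suspension flow are transverse to the fibres. So they cannot serve as the Dehn-twist curves of a factorization of the monodromy. Nor can an $H_1$ computation certify $\T^3$.

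Moreover, the slope is not a free parameter in your scheme. Your conjugacy step needs the new field on each punctured tube to be exactly $\Phi_*X_0$. On the torus of directions of $\Gamma_p$ the old flow is $\dot\theta=-\lambda\sin 2\theta$, $\dot s=1$. It permutes the meridians $\{s=\text{const}\}$, which give back $M$, and the circles $\{\theta=\text{const}\}$ of the degeneracy slope. Collapsing circles of any other slope is essentially incompatible with it. If you instead insert a different local model, $\Phi$ fails to be a conjugacy on a set of positive volume.

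The real task is therefore to choose the seed so that degeneracy-slope fillings yield $\T^3$, and this is the missing idea. The paper takes $B=A^3$ on $\widehat\Sigma=\R^2/(2\Z)^2$ with its four fixed points $P=\Z^2/(2\Z)^2$. The roof is $\equiv1$ near $P$, so exact linear solid-torus coordinates exist and the exchange $\xi=r\cos2\pi s$, $\eta=r\sin2\pi s$, $z=-\theta$ works. The roof is $\equiv\sqrt2$ near another fixed point, which gives topological mixing. The free $(\Z/2\Z)^2$ translation action reduces the four fillings to one degeneracy filling of $M_1(A^3;\Sigma)$. That filling is the $3$-fold cyclic branched cover of the figure-eight knot, i.e.\ the Hantzsche--Wendt manifold $W$. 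Since $H_1(W)\cong(\Z/4\Z)^2$, the regular $(\Z/2\Z)^2$ cover $N\to W$ must be the translation cover, so $N\cong\T^3$. For comparison, a single degeneracy filling of $M_1(A;\Sigma)$ would give $S^3$.

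The analytic obstacle you single out does not arise, and your fallback for it would fail. With the exchange above, $\Phi_*X_0$ extends smoothly across $\xi=\eta=0$. Its sixteen saddle-foci sit at $z=k\pi/2$ on the new cores; these are the stable and unstable eigenrays. The identity $\Phi\circ\phi^t=\Psi^t\circ\Phi$ holds with the same elapsed time. Orbits near an equilibrium are just old orbits near the hyperbolic orbit $\Gamma_p$. So there is no time change, induced roof, tail estimate or UNI to verify. The cutoff argument with $\kappa=2$, $p=1$ alone gives the rate $\gamma_\alpha/(1+\alpha)$.

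Conversely, a time change $\tau Y$ with $\tau$ vanishing to finite order $k$ on the orbits makes sojourn times of order $d^{-k}$, not $|\log d|$. Return times then have polynomial tails, which typically allows only polynomial decay. So the last two claims of your proposal are inconsistent with each other.
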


The theorem allows equilibria and therefore does not address the stronger problem of constructing a nowhere-vanishing autonomous exponential mixer.

The construction starts from an exponentially mixing Anosov suspension. We perform surgery near four periodic orbits by exchanging meridians and longitudes, and identify the resulting manifold with \(\mathbb T^3\). The vector field and its invariant volume extend smoothly across the surgery cores. Away from these cores, the new flow is conjugate to the seed flow with the same time parametrization, but the conjugacy becomes singular near the removed orbits. The main analytic difficulty is therefore to control the regularity of pulled-back observables. A cutoff argument balances this loss of regularity against the small volume of neighborhoods of the singular set, yielding a general quantitative transfer theorem for exponential mixing.

\begin{remark}\label{re:higher-dimensional tori}
The main theorem also gives examples on $\mathbb T^{3m}$ for every integer $m\geq 1$. Indeed, taking the product of $m$ copies of the constructed flow gives a smooth, autonomous, divergence-free vector field on $\mathbb T^{3m}$. This product flow is also exponentially mixing; see \cite[Theorem A.4]{DolgopyatFayadLiu2022} for the corresponding result for maps.

We are also investigating the remaining dimensions. Our approach uses torus extensions of the suspension flow, followed by the same surgery in the three-dimensional factor. The main additional difficulty is to establish exponential mixing for these extensions. We plan to address this in a separate paper.
\end{remark}

\paragraph{Notation.}
Unless otherwise stated, all manifolds are smooth, compact, connected, without boundary and are equipped with fixed smooth Riemannian metrics. We write $d_M$ for the corresponding Riemannian distance.
For a smooth map $F:M\to N$, the notation $F^*$ denotes the pullback of differential forms and $F_*$ denotes the pushforward of measures. When $F$ is a diffeomorphism, $F_*$ also denotes the pushforward of vector fields. A positive volume form $\Omega$ on $M$ determines a Borel measure $\mu_\Omega$ by $\mu_\Omega(A):=\int_A \Omega$
for every Borel set $A\subset M$. We use the same notation for the volume form and its associated measure whenever no confusion can arise. The notation $\cong$ denotes a smooth diffeomorphism of manifolds or an isomorphism of groups, according to context. All observables are taken to be real-valued.

\paragraph{Organization of the paper.}
Section~\ref{sec:seed} constructs the exponentially mixing suspension
that serves as the seed flow.
Section~\ref{sec:charts} introduces exact solid-torus coordinates near four selected periodic orbits, and Section~\ref{sec:surgery} performs the
volume-preserving surgery and describes the resulting equilibria.
Section~\ref{sec:topology} proves that the surgered manifold is diffeomorphic to $\mathbb T^3$.
Section~\ref{sec:transfer} states and proves a general quantitative transfer theorem for exponential mixing through a singular conjugacy.
Section~\ref{sec:completion} verifies its hypotheses for the present surgery and transports the resulting flow to $\mathbb{T}^3$ with normalized Lebesgue volume.

\section{An explicit exponentially mixing suspension}\label{sec:seed}

We first construct the seed flow used throughout the proof.
Starting from a hyperbolic automorphism, we form a smooth suspension flow with a suitably chosen roof function. We will prove that this flow preserves a smooth volume and is an exponentially mixing
Anosov flow.

The later sections modify this flow near four periodic orbits and show
that the resulting flow is defined on a manifold diffeomorphic to
\(\mathbb T^3\), while retaining exponential mixing.

\subsection{The base map and the roof function}

Set
\[
A=
\begin{pmatrix}
2&1\\
1&1
\end{pmatrix},
\qquad
B=A^3=
\begin{pmatrix}
13&8\\
8&5
\end{pmatrix},
\qquad
\widehat{\Sigma}:=\mathbb R^2/(2\mathbb Z)^2.
\]
Since \(B\in SL(2,\mathbb Z)\), the linear map
\(q\mapsto Bq\) preserves the lattice \((2\mathbb Z)^2\)
and hence descends to a smooth diffeomorphism
\[
B:\widehat{\Sigma}\longrightarrow\widehat{\Sigma},
\qquad
[q]\longmapsto[Bq].
\]
Define
\[
P:=\mathbb Z^2/(2\mathbb Z)^2
=
\{(0,0),(1,0),(0,1),(1,1)\}
\subset \widehat{\Sigma},
\qquad
q_*:=\left(\frac12,0\right)\in\widehat{\Sigma}.
\]
By direct computation, we have the following lemma.
\begin{lemma}\label{lem:seed-base}
The map \(B:\widehat{\Sigma}\to\widehat{\Sigma}\) is a smooth area-preserving diffeomorphism. Its eigenvalues are 
\begin{equation}
\Lambda=9+4\sqrt5>1,
\qquad
\Lambda^{-1}=9-4\sqrt5.
\label{eq:seed-eigenvalues}
\end{equation}
The map \(B\) fixes every point of \(P\), as well as the additional point \(q_*\notin P\).
\end{lemma}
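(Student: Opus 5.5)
The plan is to verify each assertion of Lemma~\ref{lem:seed-base} by direct computation.

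\emph{Diffeomorphism and area preservation.} First I compute $B=A^3$ explicitly. From
\[
A^2=\begin{pmatrix}5&3\\3&2\end{pmatrix}
\quad\text{and}\quad
A^3=A^2A=\begin{pmatrix}13&8\\8&5\end{pmatrix},
\]
we get $\det B=65-64=1$. Hence $B\in SL(2,\mathbb Z)$, and so is its inverse
\[
B^{-1}=\begin{pmatrix}5&-8\\-8&13\end{pmatrix}.
\]
Both $B$ and $B^{-1}$ therefore map $(2\mathbb Z)^2$ onto itself. Consequently the linear maps descend to mutually inverse smooth maps of $\widehat\Sigma$, so $B$ is a diffeomorphism. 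The descended map is locally the linear map $q\mapsto Bq$, whose Jacobian determinant is $1$. Hence it preserves the area form $dq_1\wedge dq_2$ and Lebesgue measure on $\widehat\Sigma$.

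\emph{Eigenvalues.} The characteristic polynomial is $\lambda^2-\operatorname{tr}B\,\lambda+\det B=\lambda^2-18\lambda+1$. Its roots are $9\pm\sqrt{80}=9\pm4\sqrt5$. Their product is $1$, so they are reciprocal. Since $4\sqrt5<9$, we have $0<9-4\sqrt5<1<9+4\sqrt5$, which gives \eqref{eq:seed-eigenvalues}.

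\emph{Fixed points.} Since $B$ is linear, $[q]$ is fixed exactly when $(B-I)q\in(2\mathbb Z)^2$. Here
\[
B-I=\begin{pmatrix}12&8\\8&4\end{pmatrix}=4\begin{pmatrix}3&2\\2&1\end{pmatrix}.
\]
For $q\in\mathbb Z^2$ we get $(B-I)q\in4\mathbb Z^2\subset(2\mathbb Z)^2$, so every point of $P$ is fixed. For $q_*=(\tfrac12,0)$ we get $(B-I)q_*=(6,4)\in(2\mathbb Z)^2$, so $q_*$ is fixed. Finally, $q_*\notin P$ because $\tfrac12\notin\mathbb Z$, so $q_*$ is not congruent mod $(2\mathbb Z)^2$ to any integer point.

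There is no real obstacle in this argument. The only point needing care is that fixed points are taken modulo the lattice $(2\mathbb Z)^2$ rather than $\mathbb Z^2$. This is exactly why the factor $4$ in $B-I$ matters: it places both the integer points and $q_*$ in the fixed set.
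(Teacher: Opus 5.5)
Your proposal is correct and matches the paper's approach: the paper simply asserts the lemma ``by direct computation,'' and you carry that computation out. The checks are $\det B=1$ with integral inverse, the characteristic polynomial $\lambda^2-18\lambda+1$, and $(B-I)q\in(2\mathbb Z)^2$ for $q\in\mathbb Z^2\cup\{q_*\}$.
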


Now we define the roof function. To retain integer translations as symmetries, we choose the roof function to be $\mathbb Z^2$-periodic, rather than merely $(2\mathbb Z)^2$-periodic. Let \begin{equation*}
a(t):=\begin{cases}e^{-1/t},&t>0,\\0,&t\leq0,\end{cases}
 \qquad
 \beta(t):=\frac{a(1-t)}{a(1-t)+a(t-1/4)}.
\end{equation*}
Then $\beta$ is smooth and satisfies
$\beta(t)=1$ for $t\le 1/4$ and $\beta(t)=0$ for $t \ge 1$.
Define the radial cutoff $b(v):=\beta(256|v|^2)$ for $v \in \mathbb R^2.$ Then $b(v)=1$ for $|v|\le 1/32$ and $b(v)=0$ for $|v|\ge 1/16$.

Finally, for $q\in\mathbb R^2$, set
\[
\chi(q):=
\sum_{k\in\mathbb Z^2}
b(q-q_*-k),
\qquad
\rho(q):=
1+(\sqrt2-1)\chi(q).
\label{eq:roof}
\]

\begin{lemma}\label{lem:seed-roof}
The function $\rho$ is smooth and $\mathbb Z^2$-periodic, and hence
descends to a smooth positive function on
$\widehat{\Sigma}$. Moreover,
\begin{equation}\label{eq:seed-roof-values}
 1\leq\rho\leq\sqrt2,\qquad
 \rho=1\text{ near }P,\qquad
 \rho=\sqrt2\text{ near }q_*+P.
\end{equation}
\end{lemma}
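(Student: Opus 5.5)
The plan is to verify the properties of $\beta$, $b$, $\chi$ and $\rho$ one at a time, directly from the definitions.

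\emph{Smoothness and values of $\beta$.} The denominator $a(1-t)+a(t-1/4)$ never vanishes. Indeed, if $t<1$ the first term is positive, and if $t\ge1>1/4$ the second is. Hence $\beta$ is a quotient of smooth functions with nonvanishing denominator, so it is $C^\infty$ and takes values in $[0,1]$. For $t\le 1/4$ we have $a(t-1/4)=0$, so $\beta=1$. For $t\ge1$ we have $a(1-t)=0$, so $\beta=0$. Consequently $b(v)=\beta(256|v|^2)$ is smooth, since $|v|^2$ is a polynomial. Also $0\le b\le1$, with $b=1$ when $256|v|^2\le1/4$, i.e.\ $|v|\le 1/32$, and $\operatorname{supp} b\subset\{|v|\le1/16\}$.

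\emph{Smoothness and periodicity of $\chi$.} Each translate $b(\cdot-q_*-k)$ is supported in the disk $D_k$ of radius $1/16$ about $q_*+k$. These disks are pairwise disjoint, because their centres are at distance at least $1>1/8$. On any bounded set only finitely many terms are nonzero, so the sum defining $\chi$ is locally finite and hence smooth. Reindexing $k\mapsto k+m$ shows $\chi(q+m)=\chi(q)$ for $m\in\mathbb Z^2$. In particular $\rho$ is $(2\mathbb Z)^2$-periodic and descends to a smooth function on $\widehat\Sigma$. Disjointness of the supports gives $0\le\chi\le1$ pointwise, since at each point at most one term is nonzero. Therefore $1\le\rho\le 1+(\sqrt2-1)=\sqrt2$, and in particular $\rho>0$.

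\emph{Values near $P$ and $q_*+P$.} The distance from any lattice point $p\in\mathbb Z^2$ to the set $q_*+\mathbb Z^2$ equals $1/2$. So the ball of radius $1/2-1/16$ about $p$ misses every $D_k$. There $\chi=0$ and $\rho=1$, which gives $\rho=1$ near $P$. Near $q_*+p$, the disk of radius $1/32$ about it lies in $D_{p}$, where $b(\cdot-q_*-p)=1$ and every other term vanishes. Hence $\chi=1$ and $\rho=\sqrt2$ there.

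There is no real obstacle. The only point needing care is the disjointness of the supports, which simultaneously gives local finiteness (smoothness) and the bound $\chi\le1$; it follows from the separation $1$ of the points of $q_*+\mathbb Z^2$ exceeding twice the support radius $1/16$.
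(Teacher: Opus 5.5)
Your proof is correct and follows the same route as the paper: disjointness of the translated supports gives local finiteness, smoothness and $0\le\chi\le1$; reindexing gives $\mathbb Z^2$-periodicity; and the distance $1/2$ from $\mathbb Z^2$ to $q_*+\mathbb Z^2$ gives the values near $P$ and $q_*+P$. You additionally verify the properties of $\beta$ and $b$, which the paper asserts without proof just before the lemma.
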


\begin{proof}
The function $b$ is a smooth bump function supported in $\overline{B(0,1/16)}$. 
Since the centers $q_*+k$ for $k\in\mathbb Z^2$ are mutually separated by distance at least $1$, their supports are disjoint. Hence the sum defining $\chi$ is locally finite and smooth, with $0\le\chi\le1.$ Reindexing the sum shows that $\chi$ is $\mathbb Z^2$-periodic, so $\rho$ is $\mathbb Z^2$-periodic. Since every integer point has distance at least $1/2$ from the set of centers, we have $\chi=0$ near every point of $\mathbb Z^2$, while $\chi=1$ near every point of $q_*+\mathbb Z^2$.
\end{proof}

\subsection{The suspension flow and its invariant volume}

Using the base map \(B:\widehat{\Sigma}\to\widehat{\Sigma}\)
and the roof function \(\rho:\widehat{\Sigma}\to(0,\infty)\), define the suspension manifold
\[
M
:=
\bigl\{(q,s):q\in\widehat{\Sigma},\ 0\le s\le \rho(q)\bigr\}/\!\sim ,
\]
where $\sim$ is the equivalence relation generated by $(q,\rho(q))\sim(Bq,0)$.
Equivalently,
\begin{equation}\label{eq:def of group}
M=
(\widehat{\Sigma}\times\mathbb R)/\langle F \rangle,
\quad
F(q,s):=(Bq,s-\rho(q)),
\end{equation}
where $\langle F\rangle=\{F^n:n\in\mathbb Z\}.$

\begin{proposition}\label{prop:m is compact}
The quotient $M$ is a compact connected smooth three-dimensional manifold without boundary.
\end{proposition}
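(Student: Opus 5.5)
We realize $M$ as the quotient of the smooth manifold $\widetilde M:=\widehat{\Sigma}\times\mathbb R$ by the action of $\mathbb Z$ generated by $F$. We then check that this action is free and properly discontinuous. Standard theory then gives a smooth manifold structure on $M$ for which the projection $\pi:\widetilde M\to M$ is a smooth covering map.

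First, $F$ is a diffeomorphism of $\widetilde M$. It is smooth because $B$ and $\rho$ are smooth (Lemmas~\ref{lem:seed-base} and~\ref{lem:seed-roof}). Its inverse $F^{-1}(q,s)=(B^{-1}q,\,s+\rho(B^{-1}q))$ is also smooth.

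Second, we control the iterates. By induction,
\[
F^n(q,s)=\bigl(B^nq,\ s-S_n(q)\bigr),\qquad S_n(q):=\sum_{j=0}^{n-1}\rho(B^jq)\quad(n\ge1),
\]
with the analogous formula for $n<0$. Since $1\le\rho\le\sqrt2$, we get $|S_n(q)|\ge|n|$ for all $n$ and $q$.
\begin{itemize}
\item[] \emph{Freeness.} If $F^n(q,s)=(q,s)$ with $n\neq0$, then $S_n(q)=0$, which contradicts $|S_n(q)|\ge|n|$.
\item[] \emph{Proper discontinuity.} Let $K\subset\widetilde M$ be compact, so $K\subset\widehat{\Sigma}\times[-R,R]$ for some $R$. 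If $F^n(K)\cap K\neq\emptyset$, the second coordinates force $|n|\le|S_n(q)|\le 2R$. Hence only finitely many $n$ qualify.
\item[] \emph{Local injectivity.} For each point $(q,s)$, the open set $\widehat{\Sigma}\times(s-\tfrac12,s+\tfrac12)$ is disjoint from all of its translates under $F^n$ with $n\ne0$. This is because every nontrivial $F^n$ shifts the $s$-coordinate by at least $1$.
\end{itemize}
So $\pi$ is a covering map and $M$ inherits a unique smooth structure making $\pi$ a local diffeomorphism. Hausdorffness follows from proper discontinuity. $M$ is three-dimensional and without boundary because $\widetilde M$ is.

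Third, we show $M$ agrees with the fundamental-domain description. Set
\[
D:=\{(q,s):0\le s\le\rho(q)\}.
\]
Every orbit of $\langle F\rangle$ meets $D$: given $(q,s)$, apply $F^n$ with $n$ chosen so that the second coordinate lands in $[0,\rho(B^nq)]$. Such an $n$ exists since the partial sums $S_n$ increase strictly to $\pm\infty$ with increments $\rho(B^jq)$. Two points of $D$ lie in the same orbit only when they have the form $(q,\rho(q))$ and $F(q,\rho(q))=(Bq,0)$. Hence $D/\!\sim\;\to\widetilde M/\langle F\rangle$ is a continuous bijection. It is a homeomorphism because $D$ is compact and the target is Hausdorff. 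This gives compactness of $M$, since $\pi(D)=M$ and $D$ is compact, being a closed subset of $\widehat{\Sigma}\times[0,\sqrt2]$.

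Finally, connectedness follows because $\widehat{\Sigma}\times\mathbb R$ is connected and $\pi$ is a continuous surjection. The only mildly delicate points are the identification of the two descriptions and the proper-discontinuity estimate. Both reduce to the uniform bounds $1\le\rho\le\sqrt2$.
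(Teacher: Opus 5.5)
Your proof is correct, but it takes a different route from the paper.

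The paper works directly with the fundamental domain $\mathcal F=\{(q,s):0\le s\le\rho(q)\}$. It notes that $\mathcal F$ is a compact connected smooth manifold with two boundary components $\partial_\pm\mathcal F$, and that $(q,\rho(q))\mapsto(Bq,0)$ is a diffeomorphism $\partial_+\mathcal F\to\partial_-\mathcal F$. It then invokes the standard gluing of boundary components to get a closed smooth $3$-manifold, with compactness and connectedness inherited from $\mathcal F$.

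You instead obtain the smooth structure from the free, proper $\mathbb Z$-action generated by $F$ on $\widehat\Sigma\times\mathbb R$. All of your checks reduce to $|S_n|\ge|n|$, that is, to $\rho\ge1$. You then separately prove that $D/\!\sim$ is homeomorphic to $(\widehat\Sigma\times\mathbb R)/\langle F\rangle$.

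The paper's argument is shorter. However, the smooth structure produced by gluing formally depends on a choice of collars, and the paper only asserts (``Equivalently'') that the two descriptions of $M$ agree. Your approach proves that identification. It also gives a canonical smooth structure in which $\widehat\Sigma\times\mathbb R\to M$ is a smooth covering. This is exactly what the paper later relies on when it descends the flow $T_t$, the form $\widetilde\Omega$, and the translations $\widetilde\tau_v$ to $M$, and when it argues freeness via $F^n$.

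One minor remark: the upper bound $\rho\le\sqrt2$ is inessential. Continuity of $\rho$ on the compact $\widehat\Sigma$ already makes $D$ compact.
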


\begin{proof}
Since $\widehat{\Sigma}$ is compact and $\rho>0$ is smooth,
\[
\mathcal F
=
\{(q,s):q\in\widehat{\Sigma},\ 0\le s\le\rho(q)\}
\]
is a compact connected smooth three-dimensional manifold with two boundary components,
\[
\partial_-\mathcal F=\{(q,0)\},
\qquad
\partial_+\mathcal F=\{(q,\rho(q))\}.
\]
The identification $(q,\rho(q))\longmapsto(Bq,0)$ is a diffeomorphism from $\partial_+\mathcal F$ onto $\partial_-\mathcal F$. Gluing these two boundary components therefore gives a compact smooth three-dimensional manifold $M$ without boundary. Since the quotient map $\mathcal F\to M$ is continuous and surjective, $M$ is connected.
\end{proof}

Vertical translation
\[
T_t(q,s):=(q,s+t),\quad t\in\mathbb R,
\]
commutes with $F$ and hence descends to a smooth flow on $M$, given by
\[
\phi^t([q,s]):=[q,s+t],\quad t\in\mathbb R.
\]
Its generating vector field is $X_0=\partial_s.$ Since $B:\widehat{\Sigma}\to\widehat{\Sigma}$
is an Anosov diffeomorphism, as shown in \eqref{eq:seed-eigenvalues}, and $\rho$ is positive and smooth, the corresponding suspension flow is Anosov; see \cite[Chapter~1, Exercise~1.2]{BP13}. 

\begin{proposition}\label{prop:seed-volume}
The suspension flow \(\phi^t\) preserves the smooth volume form $\Omega=dq_1\wedge dq_2\wedge ds.$ Its total volume and normalized probability are
\begin{equation}\label{eq:seed-volume-normalization}
Z=\int_{\widehat\Sigma}\rho(q)\,dq_1dq_2 <\infty,
\quad
 \mu=Z^{-1}\Omega.
\end{equation}
\end{proposition}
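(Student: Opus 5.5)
The plan is to work on the covering space $\widehat\Sigma\times\mathbb R$ and show that the form $\Omega=dq_1\wedge dq_2\wedge ds$ is invariant under the deck group $\langle F\rangle$. Once that holds, $\Omega$ descends to a smooth volume form on $M$. Invariance under the flow and the value of the total volume are then short computations.

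\textbf{Step 1: descent to $M$.} Pull $\Omega$ back by $F(q,s)=(Bq,s-\rho(q))$. Since $B$ is linear,
\[
F^*(dq_1\wedge dq_2)=\det B\,dq_1\wedge dq_2=dq_1\wedge dq_2,
\qquad
F^*ds=ds-d\rho .
\]
Here $\det B=1$ by Lemma~\ref{lem:seed-base}. The form $d\rho$ involves only $dq_1$ and $dq_2$, so its wedge with $dq_1\wedge dq_2$ vanishes. Hence $F^*\Omega=\Omega$. The form $\Omega$ is also invariant under the lattice translations defining $\widehat\Sigma$. It therefore descends to a nowhere-vanishing smooth $3$-form on $M$, which we still denote $\Omega$. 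It is a volume form because the quotient map is a local diffeomorphism.

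\textbf{Step 2: invariance under the flow.} The flow lifts to $T_t(q,s)=(q,s+t)$, which is a translation, so $T_t^*\Omega=\Omega$ on the cover. Since $T_t$ commutes with $F$, this identity descends to $(\phi^t)^*\Omega=\Omega$ on $M$. Alternatively, one can use Cartan's formula with $X_0=\partial_s$: the form $\iota_{X_0}\Omega=dq_1\wedge dq_2$ is closed, so $\mathcal L_{X_0}\Omega=d\,\iota_{X_0}\Omega=0$.

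\textbf{Step 3: total volume.} The set $\mathcal F=\{0\le s\le\rho(q)\}$ from Proposition~\ref{prop:m is compact} maps onto $M$. This map is injective except on the boundary, which has measure zero. By Fubini,
\[
Z=\int_{\widehat\Sigma}\int_0^{\rho(q)}ds\,dq=\int_{\widehat\Sigma}\rho\,dq .
\]
Lemma~\ref{lem:seed-roof} gives $1\le\rho\le\sqrt2$, so $4\le Z\le4\sqrt2$. In particular $Z$ is finite and positive, and $\mu=Z^{-1}\Omega$ is a probability measure.

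The only point requiring care is the $ds-d\rho$ term in Step 1, and it vanishes by degree counting. I expect no real obstacle.
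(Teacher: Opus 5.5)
Your proposal is correct and follows essentially the same route as the paper: you verify $F^*\Omega=\Omega$ on $\widehat\Sigma\times\mathbb R$ using $\det B=1$ and $dq_1\wedge dq_2\wedge d\rho=0$, you deduce flow invariance from $X_0=\partial_s$ (the paper states this as $\operatorname{div}_\Omega X_0=0$, which is equivalent to your translation or Cartan-formula argument), and you compute $Z$ by Fubini over the fundamental domain. Your explicit bound $4\le Z\le 4\sqrt2$, from the area $4$ of $\widehat\Sigma$ and $1\le\rho\le\sqrt2$, is a harmless sharpening of the paper's finiteness claim.
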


\begin{proof}
We first verify that $\Omega$ is well-defined on the quotient $M$. On $\widehat{\Sigma}\times\mathbb R$, let
$\widetilde{\Omega}
:=
dq_1\wedge dq_2\wedge ds.$ Since $B:\widehat{\Sigma}\to\widehat{\Sigma}$ preserves area,
\[
\begin{aligned}
F^*\widetilde{\Omega}
&=
B^*(dq_1\wedge dq_2)\wedge d(s-\rho)=
dq_1\wedge dq_2\wedge(ds-d\rho)\\
&=
dq_1\wedge dq_2\wedge ds=
\widetilde{\Omega},
\end{aligned}
\]
where $dq_1\wedge dq_2\wedge d\rho=0$ due to $\rho=\rho(q_1,q_2)$. Hence $\widetilde{\Omega}$ descends to a smooth volume form $\Omega$ on $M$.

The suspension vector field is $X_0=\partial_s.$ Therefore, $\operatorname{div}_{\Omega}X_0
=0.$ Thus the suspension flow $\phi^t$ preserves $\Omega$. Finally, 
\[
Z:=
\int_M\Omega=
\int_{\widehat{\Sigma}}
\int_0^{\rho(q)}
ds\,dq_1\,dq_2=
\int_{\widehat{\Sigma}}
\rho(q)\,dq_1\,dq_2 < \infty,
\]
where the last inequality follows from the boundedness of \(\rho\) and the compactness of \(\widehat\Sigma\). Hence $\mu:=Z^{-1}\Omega$ is an invariant probability measure for $\phi^t$.
\end{proof}

\subsection{Topological mixing}

We next prove that the suspension flow is topologically mixing. We use the following periodic-orbit criterion for hyperbolic flows.
\begin{proposition}\cite[Theorem~1.1(2)]{FMT}
\label{prop:period-criterion}
Let \(\varphi^t\) be a \(C^r (r \geq 1)\) Axiom~A flow satisfying the no-cycle condition, and let
\(L\) be a nontrivial basic set of \(\varphi^t\). Then \(\varphi^t|_L\) is not topologically mixing if and only if there exists \(c>0\) such that every periodic-orbit period in $L$ belongs to $c\mathbb Z.$
\end{proposition}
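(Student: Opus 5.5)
The plan is to pass to a symbolic model and reduce both implications to statements about the roof function. Both directions hinge on one reformulation: $\varphi^t|_L$ fails to be topologically mixing exactly when it has a nonconstant continuous eigenfunction, i.e.\ a continuous $h:L\to S^1$ with $h\circ\varphi^t=e^{2\pi i t/c}h$ for some $c>0$.

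First, by Bowen's Markov partitions for basic sets of Axiom~A flows, $\varphi^t|_L$ is semiconjugate to a suspension flow $\sigma_r^t$ over a transitive subshift of finite type $(\Sigma_A,\sigma)$ with a positive H\"older roof $r$. The semiconjugacy is finite-to-one and is a bijection on a residual set. Periodic orbits of $\sigma_r^t$ map to periodic orbits of $\varphi^t$, with the same period up to the bounded multiplicity coming from boundary identifications. Conversely, every periodic orbit of $\varphi^t$ in $L$ lifts to one whose period is a positive integer multiple of the original. A period of $\sigma_r^t$ through $\sigma^n x=x$ is the Birkhoff sum $S_nr(x)$.

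For the direction ``not mixing $\Rightarrow$ lattice of periods'', I would use the eigenfunction characterization. If the flow is transitive but not topologically mixing, I would show that the strong unstable leaves are not dense in $L$. The closure of a strong unstable leaf, saturated by strong stable leaves, then yields a global cross-section on which the return time is constant, say $c$. Equivalently, $t\mapsto h(\varphi^t x)$ is a continuous eigenfunction with frequency $1/c$. Evaluating $h$ along a periodic orbit of period $T$ gives $e^{2\pi i T/c}=1$, so $T\in c\mathbb Z$. I expect this step to be the main obstacle: the non-density argument needs the local product structure of $L$ in an essential way, via the Plante/Bowen dichotomy.

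For the converse, suppose all periods lie in $c\mathbb Z$. Then the function $e^{2\pi i r/c}:\Sigma_A\to S^1$ has trivial product over every periodic orbit of $\sigma$. Here one must handle the fact that lifted periods are multiples of actual periods, which still lie in $c\mathbb Z$. The Liv\v{s}ic theorem for $S^1$-valued H\"older cocycles over a transitive subshift then gives a H\"older $u$ with $e^{2\pi i r/c}=u\circ\sigma/u$. The formula $h(x,s)=u(x)^{-1}e^{2\pi i s/c}$ defines a continuous eigenfunction of $\sigma_r^t$. Since $h$ is constant on fibres of the semiconjugacy, it descends to $L$; this is checked using the fact that the semiconjugacy preserves time and that $h$ is determined by its values on the dense bijective set. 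A nonconstant continuous eigenfunction forbids topological mixing: choose open sets $U,V$ on which $h$ takes values in disjoint small arcs. Then $\varphi^tU\cap V=\emptyset$ whenever $e^{2\pi i t/c}$ rotates the first arc away from the second, and this happens for an unbounded set of times $t$.

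Nontriviality of $L$ ensures the eigenfunction is nonconstant, i.e.\ that $L$ is not a single periodic orbit.
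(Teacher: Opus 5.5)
The paper gives no proof of this statement: it is quoted from \cite{FMT} and used as a black box, so your outline has to stand on its own. Your direction ``not mixing $\Rightarrow$ periods in $c\mathbb Z$'' reduces to the Bowen--Plante alternative (a non-mixing basic set has a global cross-section with constant return time). That is acceptable as an appeal to that theorem, and evaluating the resulting eigenfunction on a periodic orbit is correct.

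The genuine gap is in the converse, at the step where you push the symbolic eigenfunction down to $L$. Non-mixing does not automatically pass from an extension to a factor, and your justification does not show that $h$ is constant on fibres of $\pi$. Indeed, $h\circ\pi^{-1}$ is continuous on the set $G$ where $\pi$ is injective, but it extends continuously to $L$ only if $h$ is already constant on every fibre $\pi^{-1}(y)$ with $y\notin G$, which is the claim itself. Time-preservation plus injectivity on a dense set do not force this. For example, take the constant-roof suspension of the full $2$-shift and glue the two period-one circles over its fixed points by a time shift $\theta\in(0,1)$. This gives a time-preserving factor map, injective on an open dense set, through which $e^{2\pi i s}$ does not descend. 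There is also an inversion slip: with $e^{2\pi i r/c}=u\circ\sigma/u$ and $(x,r(x))\sim(\sigma x,0)$, compatibility forces $h(x,s)=u(x)e^{2\pi i s/c}$, not $u(x)^{-1}e^{2\pi i s/c}$.

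Either of two repairs works.

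\emph{Minimal repair.} Do not descend $h$ at all. Your arc argument, run on $\Sigma_r$, gives nonempty open $U,V\subset\Sigma_r$ and $t_n\to\infty$ with $\sigma_r^{t_n}U\cap V=\emptyset$. Set $U^\sharp:=L\setminus\pi(\Sigma_r\setminus U)$, and define $V^\sharp$ similarly.
\begin{enumerate}
\item They are open, since $\pi$ is a closed map.
\item They are nonempty, since points with singleton fibres are dense in $\Sigma_r$ for Bowen's coding.
\item Since $\varphi^tU^\sharp=(\sigma_r^tU)^\sharp$, any $w\in\varphi^tU^\sharp\cap V^\sharp$ satisfies $\emptyset\ne\pi^{-1}(w)\subset\sigma_r^tU\cap V$.
\end{enumerate}
Hence $\varphi^{t_n}U^\sharp\cap V^\sharp=\emptyset$, so $\varphi^t|_L$ is not mixing.

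\emph{Simpler route.} Drop Markov partitions and Liv\v{s}ic entirely and use the closing lemma with period control on the locally maximal set $L$. For $\epsilon>0$ there are $\delta,T_0>0$ such that if $x\in L$, $t\ge T_0$ and $d(x,\varphi^tx)<\delta$, then some $p\in L$ satisfies $\varphi^{T'}p=p$ with $|T'-t|<\epsilon$. Since $T'$ is a multiple of the least period of $p$, the hypothesis gives $T'\in c\mathbb Z$. If $\varphi^t|_L$ were mixing, a relatively open $U\subset L$ of diameter less than $\delta$ would meet $\varphi^tU$ for all large $t$, in particular for $t\in(\mathbb Z+\tfrac12)c$. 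Taking $\epsilon=c/4$ gives a contradiction.

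Finally, nontriviality is needed only to exclude a fixed point. An eigenfunction with nonzero frequency is never constant, and a single periodic orbit is consistent with the statement anyway.
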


\begin{proposition}
\label{prop:seed-topological-mixing}
The suspension flow \(\phi^t\) is topologically mixing.
\end{proposition}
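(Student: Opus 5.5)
We will apply Proposition~\ref{prop:period-criterion} with $L=M$. The suspension flow $\phi^t$ is Anosov, as noted above. Hence it is Axiom~A, and since its nonwandering set is all of $M$ (it preserves the smooth volume $\Omega$ by Proposition~\ref{prop:seed-volume}, so Poincar\'e recurrence applies), the no-cycle condition holds trivially with the single nontrivial basic set $L=M$. By the criterion, it suffices to exhibit two periodic orbits whose periods have irrational ratio. Then no $c>0$ can satisfy that all periods lie in $c\mathbb Z$.

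To produce these orbits, we use the fixed points of $B$ from Lemma~\ref{lem:seed-base}. If $Bq=q$, the orbit of $[q,0]$ under $\phi^t$ closes up for the first time at $t=\rho(q)$. Indeed, $\phi^{\rho(q)}[q,0]=[q,\rho(q)]=[Bq,0]=[q,0]$. Minimality is automatic, since $[q,s]$ for $0\le s<\rho(q)$ are distinct points of the fundamental domain. By Lemma~\ref{lem:seed-roof}, the fixed point $(0,0)\in P$ has $\rho=1$, so it gives a periodic orbit of period $1$. The fixed point $q_*=(\tfrac12,0)$ lies in $q_*+P$, where $\rho=\sqrt2$, so it gives a periodic orbit of period $\sqrt2$.

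Suppose all periods lay in $c\mathbb Z$. Then $1=cn$ and $\sqrt2=cm$ for some positive integers $n,m$, which forces $\sqrt2=m/n\in\mathbb Q$, a contradiction. Hence $\phi^t|_M$ is topologically mixing. We could double-check that $Bq_*=q_*$ in $\widehat\Sigma$: $B(\tfrac12,0)=(\tfrac{13}2,4)\equiv(\tfrac12,0)$ mod $(2\mathbb Z)^2$, which is what Lemma~\ref{lem:seed-base} asserts.

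The only point needing care is the verification of the hypotheses of Proposition~\ref{prop:period-criterion}: that an Anosov flow is Axiom~A with no cycles, and that $M$ is a single nontrivial basic set. The key fact is that the nonwandering set equals $M$, because volume is preserved. Then spectral decomposition, together with connectedness of $M$ (Proposition~\ref{prop:m is compact}) and the fact that for flows basic sets are either disjoint closed invariant pieces or the whole set, shows there is exactly one basic set. It is nontrivial since it is not a single orbit. Everything else is the elementary period computation above.
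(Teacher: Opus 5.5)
Your proof is correct and follows the paper's argument: you apply the periodic-orbit criterion of Proposition~\ref{prop:period-criterion} with $L=M$, using the orbits over the $B$-fixed points $(0,0)$ and $q_*$, whose periods $1$ and $\sqrt2$ have irrational ratio. The only difference is in checking the hypotheses: you get a single basic set from volume preservation (Poincar\'e recurrence forces the nonwandering set to be $M$) together with connectedness of $M$, whereas the paper derives transitivity from a dense forward $B$-orbit and density of periodic orbits from that of $B$-periodic points; both routes are valid.
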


\begin{proof}

We first verify that $\phi^t$ and $M$ meet the conditions in Proposition~\ref{prop:period-criterion}.
Since \(B\) is a hyperbolic toral automorphism, it is topologically mixing. Moreover, its periodic points are dense in \(\widehat{\Sigma}\); see \cite[Exercise~1.7.1]{BS02}. If \(q\) is periodic for \(B\) with period \(n\), then its suspension orbit is periodic with period $T(q)=\sum_{j=0}^{n-1}\rho(B^j q).$
Since the periodic points of \(B\) are dense in \(\widehat{\Sigma}\),
the periodic orbits of \(\phi^t\) are dense in \(M\). Since $\phi^t$ is also a  smooth Anosov flow, it is a smooth Axiom A flow. 

Since $B$ is topologically mixing, there exists \(q\in\widehat{\Sigma}\) whose forward orbit under \(B\) is dense. We claim that $[q,0] \in M$ has a dense forward orbit under $\phi^t$. Thus, $\phi^t$ is topologically transitive. The spectral decomposition therefore consists of the single basic set \(M\), so the no-cycle condition holds.

Since \(M\) is the whole phase space, it is closed and locally maximal.
Thus \(M\) is a nontrivial hyperbolic basic set in the sense of \cite{FMT}.

Since the points in \(P\) and \(q_*\) are fixed by \(B\), their
suspension orbits close after a single roof crossing. Hence, by
\eqref{eq:seed-roof-values}, their least periods are \(1\) and \(\sqrt2\), respectively. Thus the set of periodic-orbit periods is not contained in any lattice \(c\mathbb Z\). By Proposition~\ref{prop:period-criterion}, \(\phi^t\) is topologically mixing.
\end{proof}

\subsection{Exponential mixing of the seed flow}

We now apply the exponential mixing theorem of \cite{TZ}.
The version needed here is the following.

\begin{theorem}\cite[Theorem~1.1]{TZ}
\label{thm:tz}
Let $\varphi^t$ be a topologically mixing $C^\infty$ Anosov flow on a compact three-dimensional manifold. Then, $\varphi^t$ is exponentially mixing with respect to any equilibrium measure with H\"older  potential and all H\"older test functions. 
More precisely, for each fixed $0<\alpha<1$, there exists
$\gamma_\alpha>0$ such that for every
$h,l\in C^\alpha(M)$, there exists $C_{h,l} >0$ such that
\[
|\Corr_{\eta,\varphi}(h,l;t)|
\le C_{h,l}e^{-\gamma_\alpha t},
\qquad t\ge0,
\]
where $\eta$ is the corresponding equilibrium measure.
\end{theorem}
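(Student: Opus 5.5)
This statement is quoted from \cite[Theorem~1.1]{TZ}, and in the paper we use it as a black box. If we had to establish it ourselves, we would argue through the spectral theory of the transfer operators $\mathcal L^t h:=(e^{\int_0^t\psi\circ\varphi^{-s}\,ds}\,h)\circ\varphi^{-t}$ acting on an anisotropic Banach space $\mathcal B$. Here $\psi$ is the H\"older potential, normalized so that its pressure is zero. The space $\mathcal B$ consists of distributions that are smooth along $E^u$ and dual to smooth along $E^s$. The first step is a Lasota--Yorke inequality for the resolvent $R(z)=\int_0^\infty e^{-zt}\mathcal L^t\,dt$. It should show that $R(z)$ has essential spectral radius strictly less than $|z|^{-1}$ in a half-plane $\Re z>-\sigma_0$. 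This is standard once $\mathcal B$ is adapted to the hyperbolic splitting.

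The second step is to exclude spectrum of the generator on the imaginary axis apart from the simple eigenvalue $0$. Topological mixing gives this, because a unitary eigenvalue $ib\neq0$ would produce a continuous eigenfunction and force all periods into $(2\pi/b)\mathbb Z$.

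The decisive third step is a Dolgopyat-type estimate. We would aim to show $\|R(a+ib)^n\|_{\mathcal B}\le C|b|^{\epsilon}\,(a+\kappa)^{-n}$ for $|b|$ large and $n\sim\log|b|$, uniformly in a strip $|a|<\sigma_1$. The cancellation mechanism needs a quantitative non-integrability of $E^s\oplus E^u$. In dimension three we would get it as follows. Plante's dichotomy says that either $E^s\oplus E^u$ is integrable, in which case the flow is a constant-roof suspension and hence not topologically mixing, or it is not integrable. For $C^\infty$ flows on three-manifolds, the stable and unstable foliations are $C^{1+}$, and one can consider the temporal distance function on $su$-quadrilaterals. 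Real-analyticity-type rigidity of these jets then upgrades mere non-integrability to a uniform non-degeneracy at some finite jet order. This yields the oscillatory cancellation in the Dolgopyat argument.

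Once the resolvent bound holds in a strip, a contour shift in the inverse Laplace transform gives $|\Corr_{\eta,\varphi}(h,l;t)|\le C_{h,l}e^{-\gamma_\alpha t}$, first for smooth $h,l$. We then pass to $C^\alpha$ observables by mollification, which is where $\gamma_\alpha$ shrinks with $\alpha$. We expect the main obstacle to be the third step, namely converting qualitative non-integrability into a uniform cancellation estimate with only limited regularity of the foliations. This is precisely the contribution of \cite{TZ}, and it is the reason we invoke their result rather than reprove it. For our purposes we only need to check the hypotheses: the seed flow is $C^\infty$, Anosov, and topologically mixing by Proposition~\ref{prop:seed-topological-mixing}, and $\mu$ is the equilibrium measure of the H\"older potential $-\log\det D\varphi^1|_{E^u}$, since it is a smooth invariant volume (Proposition~\ref{prop:seed-volume}).
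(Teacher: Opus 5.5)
Your proposal matches the paper. The paper gives no proof of this statement: it imports it from \cite{TZ} as a black box and then checks the hypotheses, exactly as you do. The flow is $C^\infty$ and Anosov, it is topologically mixing by Proposition~\ref{prop:seed-topological-mixing}, and $\mu$ is identified as the equilibrium state of the geometric unstable potential via Bowen--Ruelle, Proposition~\ref{prop:bowen-ruelle}. Your heuristic outline of the Tsujii--Zhang argument is not load-bearing, but its Lasota--Yorke step is misstated. The standard bound is $r_{\mathrm{ess}}(R(z))\le(\Re z+\sigma_0)^{-1}$ for $\Re z>0$, not a bound by $|z|^{-1}$ on the half-plane $\Re z>-\sigma_0$; on that half-plane the Laplace integral defining $R(z)$ does not even converge.
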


By the preceding sections, $M$ is a compact smooth three-dimensional manifold and $\phi^t$ is a topologically mixing $C^\infty$ Anosov flow. Thus, in order to apply Theorem~\ref{thm:tz} to the invariant probability $\mu$, it remains only to identify $\mu$ as the equilibrium measure of a H\"older potential.

Let $E^u$ denote the unstable bundle of $\phi^t$, and let $J_t^u(x)$ denote the Jacobian of $D\phi_x^t:E_x^u\longrightarrow E_{\phi^t x}^u$, with respect to a fixed smooth Riemannian metric. Define the unstable geometric potential by
\[
F^u(x)
:=
-\left.\frac{d}{dt}\right|_{t=0}
\log J_t^u(x).
\label{eq:unstable-potential}
\]
An equilibrium measure for a potential $F$ is an invariant probability maximizing
$h_\eta(\phi^1)+\int F\,d\eta$ among all invariant probability measures $\eta$, where \(h_\eta(\phi^1)\) denotes the measure entropy of the time-one map \(\phi^1\) with respect to \(\eta\).

\begin{proposition}\cite{BR}
\label{prop:bowen-ruelle}
Let \(A\) be a basic hyperbolic set for a \(C^2\) flow. The unstable geometric potential \(F^u\) is H\"older continuous and has a unique equilibrium measure \(\mu_{F^u}\). Moreover, if $A=M$, then every invariant probability measure absolutely continuous with respect to smooth volume coincides with \(\mu_{F^u}\).
\end{proposition}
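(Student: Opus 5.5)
We follow the classical Bowen–Ruelle argument and split the statement into three parts: the regularity of \(F^u\), uniqueness of its equilibrium measure, and the identification of absolutely continuous invariant measures. Each part reduces to standard results of uniform hyperbolic theory, applied to the flow in the setting of Proposition~\ref{prop:bowen-ruelle}.

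\emph{Hölder continuity.} For a \(C^2\) flow with a hyperbolic set \(A\), the unstable bundle \(E^u\) is Hölder continuous on \(A\). This follows from the standard graph-transform or cone-field argument: the bundle is the fixed point of a fiber contraction whose base dynamics expands at a bounded rate. Since \(D\phi^t\) is \(C^1\) in \((x,t)\), we have
\[
F^u(x)=-\left.\frac{d}{dt}\right|_{t=0}\log J^u_t(x)=-\operatorname{tr}\bigl(\Pi^u_x\circ \nabla X_0(x)|_{E^u_x}\bigr).
\]
Here \(\Pi^u_x\) is a projection onto \(E^u_x\), adapted with respect to the metric. The right-hand side is a smooth function of the pair \((x,E^u_x)\). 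Hence \(F^u\) is Hölder continuous.

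\emph{Uniqueness.} On a basic set the flow is expansive and has the specification property; equivalently, it admits a Markov partition, which gives a symbolic model as a suspension over a subshift of finite type with Hölder roof. Bowen's theorem then states that every Hölder potential has a unique equilibrium state. The proof transfers the potential to the shift, applies the Ruelle–Perron–Frobenius theorem there, and uses that the coding is injective off a set which has measure zero for every equilibrium state.

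\emph{Identification with volume when \(A=M\).} We plan a variational argument. Ruelle's inequality gives, for every invariant probability \(\nu\),
\[
h_\nu(\phi^1)\le \int \sum_{\lambda_i>0}\lambda_i\,d\nu=-\int F^u\,d\nu,
\]
where the equality holds because, for uniformly hyperbolic systems, the sum of positive exponents equals the averaged unstable volume growth. Hence the pressure satisfies \(P(F^u)\le 0\). If \(\nu\) is absolutely continuous with respect to smooth volume, Pesin's entropy formula makes this an equality. Therefore \(h_\nu+\int F^u d\nu=0\ge P(F^u)\), so \(\nu\) is an equilibrium state for \(F^u\). Uniqueness from the previous step then forces \(\nu=\mu_{F^u}\).

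The main obstacle is justifying Pesin's formula for an arbitrary absolutely continuous measure that is not assumed smooth or ergodic. If a more self-contained route is preferred, we would instead use the volume lemma. It gives that the volume of Bowen balls \(B(x,\varepsilon,T)\) is comparable to \(\exp\bigl(\int_0^T F^u(\phi^s x)\,ds\bigr)\), uniformly in \(x\in M\). From this, \(P(F^u)=0\) follows. Moreover, any absolutely continuous invariant measure satisfies the Gibbs bounds on Bowen balls, and for expansive systems with specification these bounds characterize the unique equilibrium state.
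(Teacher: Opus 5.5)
The paper gives no argument of its own here: it cites \cite{BR} for each claim (Lemma~4.1, Theorem~3.3, and Theorem~5.1 with Remark~5.2). Your first two steps unpack exactly those results, so there the routes coincide. One small correction: for flows, specification holds only on topologically mixing basic sets, so it is not ``equivalent'' to having a Markov partition. The Markov-partition route alone gives uniqueness on any basic set.

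Your third step is genuinely different. Bowen and Ruelle stay within uniform hyperbolicity. The volume lemma gives $P(F^u)=0$ and the basin property that Lebesgue-almost every point is generic for $\mu_{F^u}$. The claim for an absolutely continuous $\nu$ then follows by integrating Birkhoff averages against $\nu$. You instead combine Ruelle's inequality, the identity $\int\sum_i\lambda_i^+\,d\nu=\int\log J^u_1\,d\nu=-\int F^u\,d\nu$ (valid on a hyperbolic set), and Pesin's entropy formula.

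The obstacle you flag is not real. Ma\~n\'e's proof of Pesin's formula covers every invariant probability absolutely continuous with respect to Lebesgue for a $C^{1+\alpha}$ diffeomorphism, with no ergodicity or regularity of the density assumed, and $\phi^1$ is $C^2$. So your main route is complete. It is shorter to state, but it imports nonuniform theory (Oseledets, Ruelle, Ma\~n\'e) that postdates \cite{BR}. The volume-lemma route is self-contained and yields the stronger SRB statement.

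Your fallback is misstated, however. With a merely $L^1$ density, $\nu$ need not satisfy uniform two-sided Gibbs bounds on Bowen balls. A covering and Borel--Cantelli argument only gives a one-sided, almost-everywhere asymptotic comparison of $\nu(B(x,\varepsilon,T))$ with the volume of $B(x,\varepsilon,T)$. Combined with Brin--Katok, that yields $h_\nu(\phi^1)+\int F^u\,d\nu\ge 0$, and hence again the equilibrium property.
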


\begin{proof}
The H\"older regularity of \(F^u\) is
\cite[Lemma~4.1]{BR}, and uniqueness of its equilibrium measure follows
from \cite[Theorem~3.3]{BR}. In the case \(A=M\), \cite[Theorem~5.1 and Remark~5.2]{BR} show that every absolutely continuous invariant probability coincides with this equilibrium measure.
\end{proof}

By the proof of Proposition~\ref{prop:seed-topological-mixing}, the basic set considered here is the whole manifold \(A=M\). Since $\mu$ is a smooth invariant probability measure, it is absolutely continuous with respect to smooth volume. Proposition~\ref{prop:bowen-ruelle} therefore implies that $\mu$ is the equilibrium measure of the
H\"older potential $F^u$. Hence Theorem~\ref{thm:tz} applies to  $(\phi^t,\mu)$.
For every $0<\alpha<1$, there exists $\gamma_\alpha>0$ such that,
for every fixed $h,l\in C^\alpha(M)$, there exists $C_{h,l} >0$ such that
\begin{equation}\label{eq:pointwise-exp-mixing}
|\Corr_{\mu,\phi}(h,l;t)|
\le
C_{h,l}e^{-\gamma_\alpha t},
\quad t\ge0.
\end{equation}

For the transfer argument used later, we need a uniform estimate in
terms of the H\"older norms of the observables, with a constant
independent of the particular pair \(h,l\). This follows from the
uniform boundedness principle.

\begin{lemma}
\label{lem:uniform-bilinear}
Let $E$ be a Banach space and let $(B_t)_{t\ge0}$ be a family of
continuous bilinear forms on $E\times E$. If for every fixed $h,l\in E$,
\[
\sup_{t\ge0}|B_t(h,l)|<\infty,
\]
then $\sup_{t\ge0}\|B_t\|<\infty.$
\end{lemma}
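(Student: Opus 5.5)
I will apply the uniform boundedness principle twice, freezing one argument at a time. For fixed $h\in E$ and $t\ge0$, the map $l\mapsto B_t(h,l)$ is a continuous linear functional on $E$; call it $B_t(h,\cdot)\in E^*$. By hypothesis, for every fixed $l$ the family $\{B_t(h,\cdot)\}_{t\ge0}$ satisfies $\sup_t|B_t(h,\cdot)(l)|<\infty$. The Banach--Steinhaus theorem on the Banach space $E$ then gives, for each fixed $h$,
\[
K(h):=\sup_{t\ge0}\|B_t(h,\cdot)\|_{E^*}<\infty .
\]

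Next I consider the linear maps $T_t:E\to E^*$, $T_t h:=B_t(h,\cdot)$. Each $T_t$ is bounded, with $\|T_t\|=\|B_t\|$ by the definition of the norm of a bilinear form. The first step says exactly that $\sup_t\|T_t h\|_{E^*}=K(h)<\infty$ for every $h\in E$. Since $E$ is Banach and $E^*$ is a normed space (indeed Banach), the uniform boundedness principle applies once more and yields $\sup_t\|T_t\|<\infty$, i.e.\ $\sup_{t\ge0}\|B_t\|<\infty$.

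No real obstacle is expected. The only points to check are that continuity of a bilinear form on $E\times E$ is equivalent to $\|B_t\|:=\sup_{\|h\|,\|l\|\le1}|B_t(h,l)|<\infty$, and that $\|T_t\|=\|B_t\|$; both follow directly from the definitions. Completeness of $E$ is what both applications of Banach--Steinhaus need, and it is used in each step. (Alternatively, one could run a single Baire category argument on the closed sets $\{(h,l):\sup_t|B_t(h,l)|\le n\}$ in $E\times E$ and use bilinear scaling, but the two-step route is cleaner.) In the application, $E=C^\alpha(M)$ is Banach and $B_t(h,l)=e^{\gamma t}\Corr_{\mu,\phi}(h,l;t)$ for a fixed $0<\gamma\le\gamma_\alpha$. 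This $B_t$ is continuous, with $|B_t(h,l)|\le 2e^{\gamma t}\|h\|_{L^\infty}\|l\|_{L^\infty}$, and \eqref{eq:pointwise-exp-mixing} gives the pointwise bound needed in the hypothesis.
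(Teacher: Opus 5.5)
Your proposal is correct and is essentially the paper's own proof: Banach--Steinhaus is applied first to the functionals $l\mapsto B_t(h,l)$ for fixed $h$, and then to the operators $T_t:E\to E^*$, $T_th=B_t(h,\cdot)$, using $\|T_t\|=\|B_t\|$. Your additional remarks on the application (continuity of $B_t$ via the $L^\infty$ bound, and the pointwise hypothesis coming from \eqref{eq:pointwise-exp-mixing}) are also accurate.
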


\begin{proof}
For fixed $h\in E$, the family of bounded linear functionals
\[
l \longmapsto B_t(h,l)
\]
is pointwise bounded. The Banach–Steinhaus theorem gives
\[
\sup_{t\ge0}\|B_t(h,\cdot)\|_{E^*}<\infty.
\]
Now define
\[
T_t:E\to E^*,
\qquad
T_th:=B_t(h,\cdot).
\]
The family $(T_t)_{t\ge0}$ is pointwise bounded, so a second application of the Banach–Steinhaus theorem gives
\[
\sup_{t\ge0}\|T_t\|<\infty.
\]
Since $\|T_t\|=\|B_t\|,$ the result follows.
\end{proof}

\begin{proposition}
\label{prop:seed-mixing}
For every $0<\alpha<1$, there exist constants
$C_\alpha,\gamma_\alpha>0$ such that for all $h,l\in C^\alpha(M)$,
\begin{equation}\label{eq:seed-mixing}
|\Corr_{\mu,\phi}(h,l;t)|
\le
C_\alpha e^{-\gamma_\alpha t}
\|h\|_{C^\alpha(M)}
\|l\|_{C^\alpha(M)}, \quad t\ge0.
\end{equation}
\end{proposition}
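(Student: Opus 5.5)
The plan is to upgrade the pairwise estimate \eqref{eq:pointwise-exp-mixing} to a uniform bilinear estimate using Lemma~\ref{lem:uniform-bilinear}. Fix $0<\alpha<1$ and let $E=C^\alpha(M)$ with the norm $\|\cdot\|_{C^\alpha}$. This is a Banach space: a Cauchy sequence converges uniformly, and the H\"older seminorm is lower semicontinuous under uniform convergence, so the limit lies in $E$ and the convergence holds in norm.

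The weighting must be chosen so that the rescaled forms are still bounded for each fixed pair. For this I pick the exponent $\gamma_\alpha$ from \eqref{eq:pointwise-exp-mixing} and set $\gamma:=\gamma_\alpha/2$. Then I define, for $t\ge0$,
\[
B_t(h,l):=e^{\gamma t}\operatorname{Corr}_{\mu,\phi}(h,l;t).
\]
Each $B_t$ is bilinear. It is continuous because
\[
|\operatorname{Corr}_{\mu,\phi}(h,l;t)|\le 2\|h\|_{L^\infty}\|l\|_{L^\infty}\le 2\|h\|_{C^\alpha}\|l\|_{C^\alpha},
\]
which uses only that $\phi^t$ preserves the probability $\mu$. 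For fixed $h,l$, \eqref{eq:pointwise-exp-mixing} gives
\[
|B_t(h,l)|\le C_{h,l}e^{-\gamma t}\le C_{h,l}.
\]
In fact one could even take $\gamma=\gamma_\alpha$ directly, since boundedness for fixed $(h,l)$ is all that is needed; halving the rate is just a safe choice.

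Lemma~\ref{lem:uniform-bilinear} then yields $C_\alpha:=\sup_t\|B_t\|<\infty$. Unwinding the definition of $B_t$ gives
\[
|\operatorname{Corr}_{\mu,\phi}(h,l;t)|\le C_\alpha e^{-\gamma t}\|h\|_{C^\alpha}\|l\|_{C^\alpha},
\]
which is \eqref{eq:seed-mixing} with rate $\gamma$ (renamed $\gamma_\alpha$).

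The only points that need care are the completeness of $C^\alpha(M)$ and the continuity of each $B_t$. The main subtlety is conceptual rather than technical. Theorem~\ref{thm:tz} must give a rate $\gamma_\alpha$ that does not depend on the pair $(h,l)$, with only the constant $C_{h,l}$ varying. That is exactly how Theorem~\ref{thm:tz} is stated, so Banach--Steinhaus applies without further work.
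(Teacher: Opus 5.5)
Your proposal is correct and follows essentially the same route as the paper: weight the correlation by an exponential factor and apply Lemma~\ref{lem:uniform-bilinear} (Banach--Steinhaus twice) to get a constant independent of $(h,l)$. The paper uses the full rate $\gamma_\alpha$ rather than $\gamma_\alpha/2$, and your checks that $C^\alpha(M)$ is complete and that each $B_t$ is continuous (via $|\operatorname{Corr}_{\mu,\phi}(h,l;t)|\le 2\|h\|_{L^\infty}\|l\|_{L^\infty}$) supply details the paper only asserts.
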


\begin{proof}
Fix $0<\alpha<1$ and let $\gamma_\alpha>0$ be given by
Theorem~\ref{thm:tz}. Define
\[
B_t(h,l)
:=
e^{\gamma_\alpha t}
\Corr_{\mu,\phi}(h,l;t),
\quad
h,l\in C^\alpha(M).
\]
For each fixed $t$, $B_t$ is a continuous bilinear form on
$C^\alpha(M)\times C^\alpha(M)$. Moreover,
\eqref{eq:pointwise-exp-mixing} gives
\[
\sup_{t\ge0}|B_t(h,l)|\leq C_{h,l} <\infty
\]
for every fixed pair $h,l$.
Lemma~\ref{lem:uniform-bilinear} therefore yields $\sup_{t\ge0}\|B_t\|<\infty.$
Hence there exists $C_\alpha>0$ such that
\[
|B_t(h,l)|
\le
C_\alpha
\|h\|_{C^\alpha(M)}
\|l\|_{C^\alpha(M)}.
\]
Then the definition of \(B_t\) gives \eqref{eq:seed-mixing}.
\end{proof}

\section{Exact solid-torus coordinates near the four periodic orbits}
\label{sec:charts}

We introduce local coordinates near the four selected periodic orbits for the surgery in the next section. Let $
\widehat{\pi}:\mathbb{R}^2 \rightarrow\widehat{\Sigma}$ 
denote the quotient map. Let $\lambda:=\log \Lambda$, where $\Lambda$ is the eigenvalue of $B$ defined in \eqref{eq:seed-eigenvalues}. Choose \(Q\in SO(2)\) such that
\begin{equation}\label{eq:charts-diagonalization}
 Q^{-1}BQ=\operatorname{diag}(\Lambda,\Lambda^{-1}).
\end{equation}
For \(p\in P\), let
\[
\Gamma_p:=\{[p,s]:s\in\mathbb{R}/\mathbb{Z}\}
\]
denote the corresponding periodic orbit of the suspension flow. Set $\Gamma:=\bigcup_{p\in P}\Gamma_p.$

For each \(p\in P\), choose a lift
$\widetilde p\in\mathbb{R}^2,\widehat{\pi}(\widetilde p)=p.$
Since \(P\) is finite, we may choose \(r_0>0\), independent of \(p\),
such that
\begin{equation}\label{eq:def of kp}
\kappa_p:D_{r_0} \longrightarrow\widehat{\Sigma},
\qquad
\kappa_p(u,v)
:=
\widehat{\pi}\!\left(
\widetilde p+
Q
\begin{pmatrix}
u\\ v
\end{pmatrix}
\right),
\end{equation}
is a smooth coordinate chart centered at \(p\). Here $D_{r_0}=\{(u,v)\in \mathbb{R}^2: u^2+v^2 < r_0^2 \}$. The images
\(\kappa_p(D_{r_0})\) are pairwise disjoint, and the roof function is
identically equal to \(1\) on these neighborhoods. Since \(Bp=p\) on \(\widehat\Sigma\) and \eqref{eq:charts-diagonalization} holds, we have
\[
B\kappa_p(u,v)
=
\widehat{\pi}\!\left(
B\widetilde p+
BQ
\begin{pmatrix}
u\\ v
\end{pmatrix}
\right)                                    =
\widehat{\pi}\!\left(
\widetilde p+
Q
\begin{pmatrix}
\Lambda u\\
\Lambda^{-1}v
\end{pmatrix}
\right)                                    =
\kappa_p(\Lambda u,\Lambda^{-1}v),
\]
whenever $(u,v),(\Lambda u,\Lambda^{-1}v)\in D_{r_0}.$
Thus, near the periodic orbit \(\Gamma_p\), the suspension
identification takes the form
\[
(u,v,1)\sim(\Lambda u,\Lambda^{-1}v,0).
\]

\begin{proposition}
\label{prop:solid-torus-coordinates}
There exists \(R>0\) such that, for every \(p\in P\), there is a
\(C^\infty\)-diffeomorphism
\[
\Psi_p:D_R\times\mathbb S^1\longrightarrow U_p,
\]
where \(U_p\) is an open neighborhood of \(\Gamma_p\). The four neighborhoods \(U_p\) may be chosen pairwise disjoint.
For \(s\in[0,1]\) representing \([s]\in\mathbb S^1\), the map is given by
\begin{equation}
\label{eq:solid-torus-chart}
\Psi_p(x,y,[s])
=
\left[
\kappa_p
\left(
e^{-\lambda s}x,
e^{\lambda s}y
\right),
s
\right].
\end{equation}
In the resulting coordinates \((x,y,s)\) on \(U_p\),
\begin{equation}
\label{eq:solid-torus-normal-form}
\Gamma_p=\{x=y=0\},
\qquad
X_0
=
\lambda x\,\partial_x
-
\lambda y\,\partial_y
+
\partial_s,
\qquad
\Omega
=
dx\wedge dy\wedge ds.
\end{equation}
\end{proposition}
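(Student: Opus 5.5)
The plan is to define $\Psi_p$ first on the strip $D_R\times[0,1]$ by formula \eqref{eq:solid-torus-chart}, check that it is compatible with the gluing $s\sim s+1$, and then compute the pullbacks of $X_0$ and $\Omega$.

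\emph{Choice of $R$.} Take $R>0$ with $e^{\lambda}R=\Lambda R<r_0$. Then for $(x,y)\in D_R$ and $s\in[0,1]$,
\[
(e^{-\lambda s}x)^2+(e^{\lambda s}y)^2\le \Lambda^2(x^2+y^2)<r_0^2 .
\]
So the point $(e^{-\lambda s}x,e^{\lambda s}y)$ lies in $D_{r_0}$. On $\kappa_p(D_{r_0})$ we have $\rho\equiv1$, so the strip $\{(\kappa_p(u,v),s):0\le s\le 1\}$ is contained in the fundamental domain $\mathcal F$. The map is therefore well defined and smooth on the strip.

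\emph{Descent to $\mathbb S^1$.} At $s=1$ the image is $[\kappa_p(\Lambda^{-1}x,\Lambda y),1]$. Because $\rho=1$ on these charts, this point is identified with $[B\kappa_p(\Lambda^{-1}x,\Lambda y),0]$. By the conjugation identity established just before the proposition, this equals $[\kappa_p(x,y),0]$, which is the image at $s=0$. The required membership of $(\Lambda^{-1}x,\Lambda y)$ and $(x,y)$ in $D_{r_0}$ holds by the choice of $R$.

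\emph{Smoothness across $s=0$.} I use the description $M=(\widehat\Sigma\times\mathbb R)/\langle F\rangle$ from \eqref{eq:def of group}. The formula $(x,y,s)\mapsto(\kappa_p(e^{-\lambda s}x,e^{\lambda s}y),s)$ makes sense for $s$ in a slightly larger interval $(-\varepsilon,1+\varepsilon)$ after shrinking $R$ a little. This gives a smooth lift to $\widehat\Sigma\times\mathbb R$. On the overlaps near $s=0$ and $s=1$, the lift is intertwined by $F$ with the translation $s\mapsto s+1$, by the same conjugation identity. Hence $\Psi_p$ descends to a smooth map on $D_R\times\mathbb S^1$.

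\emph{Diffeomorphism onto an open neighborhood.} For fixed $s$, the map $(x,y)\mapsto(e^{-\lambda s}x,e^{\lambda s}y)$ is a linear isomorphism, and $\kappa_p$ is a chart. So the lift is a diffeomorphism onto its image, which is open. To see that $\Psi_p$ is injective on $M$, note that two points of the lift are identified only via $F^n$. Such an identification shifts $s$ by the integer $n$, so injectivity reduces to injectivity modulo $s\sim s+1$, which we already have. I would set $U_p:=\Psi_p(D_R\times\mathbb S^1)$. These sets lie over the disjoint sets $\kappa_p(D_{r_0})$, so they are pairwise disjoint. The image of $\{x=y=0\}$ is $\{[p,s]\}=\Gamma_p$.

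\emph{The normal form.} Write $u=e^{-\lambda s}x$, $v=e^{\lambda s}y$, $t=s$. Since $Q\in SO(2)$, the chart $\kappa_p$ satisfies $dq_1\wedge dq_2=du\wedge dv$. Then
\[
du\wedge dv=(e^{-\lambda s}dx-\lambda u\,ds)\wedge(e^{\lambda s}dy+\lambda v\,ds).
\]
Wedging with $ds$ kills every $ds$ term, so
\[
du\wedge dv\wedge ds=dx\wedge dy\wedge ds ,
\]
which gives $\Omega=dx\wedge dy\wedge ds$.

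For the vector field, the flow $\phi^t$ moves $(u,v,s)$ to $(u,v,s+t)$. In the new coordinates this reads $x=e^{\lambda s}u\mapsto e^{\lambda t}x$ and $y\mapsto e^{-\lambda t}y$. Differentiating at $t=0$ gives
\[
X_0=\lambda x\,\partial_x-\lambda y\,\partial_y+\partial_s .
\]

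\emph{Main obstacle.} The only delicate point is the smoothness and injectivity of $\Psi_p$ across the gluing seam. I handle it by working with the $F$-equivariant lift on $\widehat\Sigma\times\mathbb R$, as above, rather than with the fundamental domain $\mathcal F$.
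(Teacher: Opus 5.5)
Your proposal is correct and follows essentially the same route as the paper. Both arguments use the same ingredients: the lift on $D_R\times(-\varepsilon,1+\varepsilon)$, seam compatibility via $B\kappa_p(u,v)=\kappa_p(\Lambda u,\Lambda^{-1}v)$, the unit Jacobian of $(x,y,s)\mapsto(u,v,s)$, and computing $X_0$ and $\Omega$ in $(u,v,s)$. One small imprecision: in the injectivity step, $F^n$ shifts $s$ by $\sum_{j<n}\rho(B^jq)$, not exactly by $n$, but since $\rho\ge1$ the conclusion is unchanged.
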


\begin{proof}
Fix a small \(\varepsilon>0\).  Choose \(R>0\) so that $e^{\lambda(1+\varepsilon)}R<r_0.$ Then, for every \((x,y)\in D_R\) and
\(s\in(-\varepsilon,1+\varepsilon)\), $(
e^{-\lambda s}x,
e^{\lambda s}y )\in D_{r_0}$.
So the expression in \eqref{eq:solid-torus-chart} is defined in the chosen base chart.

We first verify the compatibility across the suspension seam. By direct computation, we have
\[
\begin{aligned}
&B\,
\kappa_p
\left(
e^{-\lambda(s+1)}x,
e^{\lambda(s+1)}y
\right)                                    =
\kappa_p
\left(
\Lambda e^{-\lambda(s+1)}x,
\Lambda^{-1}e^{\lambda(s+1)}y
\right)                                    =
\kappa_p
\left(
e^{-\lambda s}x,
e^{\lambda s}y
\right).
\end{aligned}
\]
Since the suspension relation is
$(q,s+1)\sim(Bq,s)$ for $q$ near $p$,
we obtain
\[
\left[
\kappa_p
\left(
e^{-\lambda(s+1)}x,
e^{\lambda(s+1)}y
\right),
s+1
\right]
=
\left[
\kappa_p
\left(
e^{-\lambda s}x,
e^{\lambda s}y
\right),
s
\right].
\]
Thus the two local expressions agree on an open neighborhood of the suspension seam. Therefore \eqref{eq:solid-torus-chart} descends from \(D_R\times[0,1]\) to a well-defined \(C^\infty\) map
\[
\Psi_p:D_R\times\mathbb{S}^1\longrightarrow M.
\]

The inverse coordinates follow directly from
$u=e^{-\lambda s}x, v=e^{\lambda s}y$. Since 
\[
e^{\lambda(s-1)}(\Lambda u)=e^{\lambda s}u,
\qquad
e^{-\lambda(s-1)}(\Lambda^{-1}v)=e^{-\lambda s}v,
\]
these expressions are compatible with the suspension identification. Hence, for a representative $0 \leq s \leq 1$,
\[
[\kappa_p(u,v),s]
\longmapsto
\bigl(e^{\lambda s}u,e^{-\lambda s}v,[s]\bigr)
\]
defines a smooth inverse to \(\Psi_p\) on its image. 

To show that the image is open, consider the change of variables in the lifted coordinates. Since \(\kappa_p\) is a local \(C^\infty\)-diffeomorphism and
\[
(x,y,s)
\longmapsto
(u,v,s)
=
\bigl(e^{-\lambda s}x,e^{\lambda s}y,s\bigr)
\]
has Jacobian determinant
\[
\det
\frac{\partial(u,v,s)}
     {\partial(x,y,s)}
=
1,
\]
\(\Psi_p\) is a local \(C^\infty\)-diffeomorphism. In particular, it is an open map.

Define $U_p:=\Psi_p(D_R\times\mathbb S^1)$. Then $U_p$ is open. Since $\Psi_p(0,0,[s])=[p,s]$, $U_p$ is an open neighborhood of $\Gamma_p$. The map
\[
\Psi_p:D_R\times\mathbb S^1\longrightarrow U_p
\]
is a \(C^\infty\)-diffeomorphism.  By decreasing \(R\), the four neighborhoods \(U_p\) may be chosen pairwise disjoint.

It remains to compute the local expressions in \eqref{eq:solid-torus-normal-form}. The periodic orbit $\Gamma_p$ corresponds to $u=v=0,$ and therefore to $x=y=0.$

In the local $(u,v,s)$ suspension coordinates, the flow is vertical, so $X_0=\partial_s$. Using $x=e^{\lambda s}u, y=e^{-\lambda  s}v,$ we obtain
\[
\dot x
=
\lambda e^{\lambda s}u
=
\lambda x,
\quad
\dot y
=
-\lambda e^{-\lambda s}v
=
-\lambda y.
\]
Therefore
\[
X_0
=
\lambda x\,\partial_x
-
\lambda y\,\partial_y
+
\partial_s.
\]

Finally, in the local $(u,v,s)$-coordinates, since \(Q\in SO(2)\),  $\Omega=dq_1 \wedge dq_2 \wedge ds= du\wedge dv\wedge ds.$
From $u=e^{-\lambda s}x, v=e^{\lambda s}y,$ we have
\[
du
=
e^{-\lambda s}(dx-\lambda x\,ds),
\quad
dv
=
e^{\lambda s}(dy+\lambda y\,ds).
\]
Therefore
\[
\begin{aligned}
\Omega
&=
du\wedge dv\wedge ds=
(dx-\lambda x\,ds)
\wedge
(dy+\lambda y\,ds)
\wedge ds=
dx\wedge dy\wedge ds.
\end{aligned}
\]
This proves \eqref{eq:solid-torus-normal-form}.
\end{proof}

\section{Smooth volume-preserving surgery}
\label{sec:surgery}

We now modify $M$ near the four selected periodic orbits by removing a solid-torus neighborhood of each orbit and filling it back in with a different boundary identification. The resulting manifold will be denoted by $N$. In Section~\ref{sec:topology} we will prove that $N\cong \mathbb T^3.$
The purpose of this section is to carry out this surgery while keeping control of the dynamics. 

\subsection{The coordinate exchange}
\label{subsec:the coordinate exchange}

We now describe the new filling near one of the selected periodic orbits. Recall from Proposition~\ref{prop:solid-torus-coordinates} that a neighborhood $U_p$ of such an orbit $\Gamma_p$ has coordinates $(x,y,s)\in D_R\times \mathbb S^1,$ and the periodic orbit itself is given by $x=y=0$.

On the punctured disk $x^2+y^2>0$, write
$x=r\cos\theta,y=r\sin\theta,$
where $r>0$ and $\theta\in\mathbb R/(2\pi\mathbb Z)$. We introduce new coordinates by
\begin{equation}
\label{eq:coordinate-exchange}
\xi=r\cos(2\pi s),\quad
\eta=r\sin(2\pi s),\quad
z=-\theta.
\end{equation}
Thus the old orbit coordinate $s$ becomes the angular coordinate in the new disk, while the old disk angle $\theta$ becomes the new circle coordinate $z$.

For $r>0$, this change of coordinates is invertible:
\begin{equation}\label{eq:inverse formula 1}
r=(\xi^2+\eta^2)^{1/2},\quad
s=\frac{\arg(\xi+i\eta)}{2\pi}\pmod 1,
\end{equation}
and
\begin{equation}\label{eq:inverse formula 2}
    x=r\cos z, y=-r\sin z.
\end{equation}
Hence \eqref{eq:coordinate-exchange} is a diffeomorphism away from the cores.
Fix $0<r_1<R$. Let
\[
\mathcal M_p=\{q\in U_p:r(q)\le r_1\}
\]
be the smaller tube around $\Gamma_p$, and introduce a new tube
\[
\widetilde{\mathcal M}_p
=
\left\{ (\xi,\eta,z):
\xi^2+\eta^2\le r_1^2,\quad
z\in\mathbb R/(2\pi\mathbb Z)
\right\}.
\]
On the boundary $r=r_1$, \eqref{eq:coordinate-exchange} gives
\begin{equation}\label{eq:def of gp}
g_p:\partial\mathcal M_p
\longrightarrow
\partial\widetilde{\mathcal M}_p,
\qquad
(r_1 \cos \theta, r_1 \sin \theta,s)
\longmapsto
\bigl(r_1\cos(2\pi s),r_1\sin(2\pi s),-\theta\bigr).
\end{equation}
This map is well-defined because $s$ is taken modulo $1$ and $\theta,z$ modulo $2\pi$. The inverse formula above shows that $g_p$ is a smooth diffeomorphism.

We remove the interiors of the four tubes $\mathcal M_p$ and attach the new tubes $\widetilde{\mathcal M}_p$ using the maps $g_p$. Denote the resulting space by
\begin{equation}\label{eq:def of N}
N
=
\left(
M\setminus
\bigcup_{p\in P}\operatorname{int}\mathcal M_p
\right)
\cup_{g_p}
\bigcup_{p\in P}\widetilde{\mathcal M}_p.
\end{equation}
We choose the smooth structure near each gluing torus so that the coordinates \((\xi,\eta,z)\) in \eqref{eq:coordinate-exchange} extend across \(r=r_1\) to the retained part of \(U_p\). 

Recall that $\Gamma=\bigcup_{p\in P}\Gamma_p$ is the union of the four old core orbits. Let $K\subset N$ be the union of the four new cores $\{\xi=\eta=0\}$.
We define
\[
\Phi:M\setminus\Gamma\longrightarrow N\setminus K
\]
by \eqref{eq:coordinate-exchange} when $0<r<r_1$, and by the natural
identification on the unchanged part of $M$. On $r=r_1$ the two definitions agree by the definition of the gluing map $g_p$.

\begin{proposition}
\label{prop:complement-map}
The space $N$ is a compact connected smooth three-dimensional manifold without boundary, and $\Phi$ is a diffeomorphism.
\end{proposition}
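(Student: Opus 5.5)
The plan is to exhibit $N$ as a union of open sets, each carrying a smooth chart, with smooth transition maps. Then compactness, connectedness and the diffeomorphism property of $\Phi$ follow from elementary topology.

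First the charts. I would cover $N$ by two open sets. The first is the image of the open set $W:=M\setminus\bigcup_p\{r\le r_1/2\}$, which carries the smooth structure inherited from $M$. The second is, for each $p$, the open solid torus $V_p:=\{(\xi,\eta,z):\xi^2+\eta^2<R^2\}$. Here the part with $\xi^2+\eta^2\le r_1^2$ is the new tube $\widetilde{\mathcal M}_p$. The part with $r_1\le r<R$ is identified, via \eqref{eq:coordinate-exchange}, with the shell $\{r_1\le r<R\}\subset U_p$ of the retained part of $M$.

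The key observation is that the map $(x,y,s)\mapsto(\xi,\eta,z)$ of \eqref{eq:coordinate-exchange} is a diffeomorphism from the open shell $\{0<r<R\}\times\mathbb S^1$ onto $\{0<\xi^2+\eta^2<R^2\}\times\mathbb R/2\pi\mathbb Z$. It is well defined, because $s$ enters only through $2\pi s$ and $\theta$ is defined modulo $2\pi$. Its inverse is smooth, given by \eqref{eq:inverse formula 1}--\eqref{eq:inverse formula 2}: on $r>0$ the function $\arg(\xi+i\eta)$ is smooth modulo $2\pi$, and $x,y$ are smooth in $(r,z)$. The overlap of the two open sets is $\{r_1/2<r<R\}$. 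On this overlap the transition map is exactly this diffeomorphism, so it is smooth. Moreover it restricts to $g_p$ on $r=r_1$, so the gluing \eqref{eq:def of N} really produces this atlas. Hausdorffness reduces to separating points in different charts, which is easy using $r$ as a continuous function on the overlap.

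Next the global properties. Compactness holds because $N$ is the union of the compact set $M\setminus\bigcup_p\operatorname{int}\mathcal M_p$ (closed in compact $M$) and the four compact solid tori $\widetilde{\mathcal M}_p$. There is no boundary, since every point lies in one of the open charts modelled on $\mathbb R^3$. For connectedness, $M\setminus\bigcup_p\operatorname{int}\mathcal M_p$ is connected: $M$ is a connected $3$-manifold, and removing tubular neighbourhoods of finitely many circles, which have codimension two, does not disconnect it. Each $\widetilde{\mathcal M}_p$ is connected and meets this set along the torus $\partial\widetilde{\mathcal M}_p$.

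Finally, $\Phi$. On $W\setminus\Gamma$ it is the identity in the $M$-chart. On $U_p\setminus\Gamma_p$ it is, in the $V_p$-chart, the shell diffeomorphism above. These descriptions agree on the overlap, so $\Phi$ is a smooth local diffeomorphism. Bijectivity is clear: $\Phi$ maps $\{0<r<r_1\}$ onto $\widetilde{\mathcal M}_p$ minus its core and is the identity elsewhere. The inverse is smooth by the same chart description.

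The main obstacle is purely bookkeeping. One must check that the smooth structure stated after \eqref{eq:def of N} is consistent, that is, that gluing along the closed torus $r=r_1$ is compatible with extending the $(\xi,\eta,z)$ coordinates across it. The point is that $g_p$ is the restriction of the open-shell diffeomorphism, so one can glue along open collars rather than along boundaries.
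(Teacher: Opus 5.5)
Your proposal is correct and follows essentially the same route as the paper. The paper cites the standard smooth gluing construction and then fixes the smooth structure so that the $(\xi,\eta,z)$ coordinates extend across $r=r_1$; that is exactly the open-collar atlas you write down, and the smoothness of $\Phi$ and $\Phi^{-1}$ is read off from the coordinate exchange and its inverse formulas as you do. The only minor difference is connectedness: the paper shows $M\setminus\Gamma$ is connected and transports this through $\Phi$ to $N\setminus K$, whose closure is $N$, whereas you attach the connected solid tori to the connected retained part along their boundary tori; both arguments work.
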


\begin{proof}
Since each $g_p$ is a smooth diffeomorphism, the standard smooth gluing construction gives $N$ the structure of a smooth three-dimensional manifold without
boundary; see \cite[Chapter~8, \S2]{Hirsch1976}. The set $M\setminus\bigcup_{p\in P}\operatorname{int}\mathcal M_p$
is compact, and each new tube $\widetilde{\mathcal M}_p$ is compact.
Their finite disjoint union is therefore compact. Since $N$ is its
quotient under the boundary identifications $g_p$ by \eqref{eq:def of N}, $N$ is compact.

The map $\Phi$ is given by \eqref{eq:coordinate-exchange} in each tube and by the natural identification outside the tubes. By the choice of the smooth gluing structure above, these two definitions fit together smoothly across each boundary $r=r_1$. The inverse formulas \eqref{eq:inverse formula 1} and \eqref{eq:inverse formula 2} give the same conclusion for $\Phi^{-1}$. Hence $\Phi$ is a diffeomorphism.

Finally, since a path in the
three-dimensional manifold $M$ can be perturbed to avoid the finite
union of embedded circles $\Gamma$, $M\setminus\Gamma$ is connected. Hence $N\setminus K$, and therefore $N$, is connected.
\end{proof}

\subsection{The new vector field and volume}

Since $\Phi$ is a diffeomorphism, it allows us to transfer both the vector field and the volume form to
$N\setminus K$. Define
\begin{equation}\label{eq:def of Y}
Y=\Phi_*X_0,
\quad
Y_{\Phi(q)}=D\Phi_q(X_0(q)),
\quad q \in M\setminus\Gamma,
\end{equation}
and
\begin{equation}\label{eq:def of volume}
    \Omega_N:=(\Phi^{-1})^*\Omega \quad\text{on }N\setminus K.
\end{equation}
We now compute these two objects in the new coordinates. The main point is that both formulas remain smooth at $\xi=\eta=0$, and hence extend across the new cores $K$.

\begin{proposition}
\label{prop:new-vector-field}
In each new tube, 
\begin{equation}
\label{eq:new-vector-field}
Y
=
\bigl(\lambda\cos(2z)\xi-2\pi\eta\bigr)\partial_\xi
+
\bigl(2\pi\xi+\lambda\cos(2z)\eta\bigr)\partial_\eta
-
\lambda\sin(2z)\partial_z.
\end{equation}
This formula extends $Y$ smoothly across the new core $\{\xi=\eta=0\}$. Hence $Y$ extends to a smooth vector field on all of $N$. Its flow $\Psi^t$ is defined for all $t\in\mathbb R$.
\end{proposition}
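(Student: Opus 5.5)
The plan is a direct chain-rule computation in the new tube, then a smoothness check at the core, then a compactness argument for completeness.

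\textbf{Step 1: the pushforward on the punctured tube.} Work in the tube \(0<r<r_1\) and use the normal form \eqref{eq:solid-torus-normal-form}, \(X_0=\lambda x\partial_x-\lambda y\partial_y+\partial_s\). In polar coordinates \(x=r\cos\theta\), \(y=r\sin\theta\), the linear part \(\lambda(x\partial_x-y\partial_y)\) gives
\[
\dot r=\frac{x\dot x+y\dot y}{r}=\lambda r(\cos^2\theta-\sin^2\theta)=\lambda r\cos(2\theta),
\]
\[
\dot\theta=\frac{x\dot y-y\dot x}{r^2}=-2\lambda\sin\theta\cos\theta=-\lambda\sin(2\theta),
\]
and of course \(\dot s=1\). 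Next differentiate \eqref{eq:coordinate-exchange} along the flow:
\[
\dot\xi=\dot r\cos(2\pi s)-2\pi r\sin(2\pi s)\,\dot s,\qquad
\dot\eta=\dot r\sin(2\pi s)+2\pi r\cos(2\pi s)\,\dot s,\qquad
\dot z=-\dot\theta.
\]
Now substitute \(\theta=-z\). This gives \(\cos(2\theta)=\cos(2z)\) and \(\sin(2\theta)=-\sin(2z)\), so \(\dot z=-\lambda\sin(2z)\). It also gives
\[
\dot r\cos(2\pi s)=\lambda\cos(2z)\,\xi,\qquad r\sin(2\pi s)=\eta,
\]
and the analogous identities for \(\dot\eta\). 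These yield \eqref{eq:new-vector-field} on the punctured new tube. On the unchanged part of \(M\), \(\Phi\) is the identity, so \(Y=X_0\) there. By construction of the smooth structure, the two descriptions agree across \(r=r_1\).

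\textbf{Step 2: extension across the core.} The coefficients in \eqref{eq:new-vector-field} are polynomial in \((\xi,\eta)\) and trigonometric in \(z\), so they are smooth on all of \(D_{r_1}\times\mathbb R/2\pi\mathbb Z\), including \(\xi=\eta=0\). They are also \(2\pi\)-periodic in \(z\), so the formula is well defined on the tube. Since \(N\setminus K\) is dense, the extension is unique. Doing this for each of the four tubes gives a smooth vector field \(Y\) on all of \(N\), with \(Y=\Phi_*X_0\) on \(N\setminus K\).

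\textbf{Step 3: completeness of the flow.} By Proposition~\ref{prop:complement-map}, \(N\) is a compact manifold without boundary. A smooth vector field on a compact manifold without boundary is complete, so \(\Psi^t\) exists for all \(t\in\mathbb R\).

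The only step needing care is the bookkeeping in Step 1, in particular the sign convention \(z=-\theta\) and the double-angle identities. The smoothness check and completeness are immediate once the formula is established.
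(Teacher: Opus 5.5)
Your proposal is correct and follows essentially the same route as the paper's proof: the polar computation of $\dot r=\lambda r\cos(2\theta)$ and $\dot\theta=-\lambda\sin(2\theta)$, the chain rule through the coordinate exchange with $z=-\theta$, smoothness of the coefficients at $\xi=\eta=0$, and completeness from compactness of $N$. Your extra remarks on $2\pi$-periodicity in $z$ and on uniqueness of the extension (by density of $N\setminus K$) are harmless refinements.
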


\begin{proof}
By \eqref{eq:solid-torus-normal-form}, \(\dot x=\lambda x\), \(\dot y=-\lambda y\), and \(\dot s=1\). Hence, for $r>0$,
\[
\dot r
=
\frac{x\dot x+y\dot y}{r}
=
\lambda r\cos(2\theta),
\qquad
\dot\theta
=
\frac{x\dot y-y\dot x}{r^2}
=
-\lambda\sin(2\theta).
\]
Since $z=-\theta$,
\[
\dot z=-\lambda\sin(2z).
\]
Using $\xi=r\cos(2\pi s)$ and $
\eta=r\sin(2\pi s),$ we obtain
\[
\dot\xi
=
\dot r\cos(2\pi s)
-
2\pi r\sin(2\pi s)\dot s =
\lambda\cos(2z)\xi-2\pi\eta,
\]
and
\[
\dot\eta
=
\dot r\sin(2\pi s)
+
2\pi r\cos(2\pi s)\dot s =
2\pi\xi+\lambda\cos(2z)\eta.
\]
This gives \eqref{eq:new-vector-field} on the new tube away from the core.

The coefficients in \eqref{eq:new-vector-field} are smooth functions of $(\xi,\eta,z)$ and remain well-defined when $\xi=\eta=0$.
Therefore the same formula defines a smooth extension of $Y$ across the new core.

Doing this in each of the four new tubes gives a smooth vector field on all of $N$. Since $N$ is compact, every smooth vector field on $N$ is complete. Hence its flow $\Psi^t$ is defined for all
$t\in\mathbb R$.
\end{proof}

The same phenomenon occurs for the volume form.
\begin{proposition}
\label{prop:new-volume}
In each new tube,
\begin{equation}\label{eq:new volume form}
\Omega_N
=
\frac{1}{2\pi}\,d\xi\wedge d\eta\wedge dz.
\end{equation}
This formula extends $\Omega_N$ smoothly across the new core $\{\xi=\eta=0\}$. Hence $\Omega_N$ extends to a smooth positive volume form on all of $N$. The extended volume form is invariant under \(\Psi^t\) and has total mass \(Z\) as defined in \eqref{eq:seed-volume-normalization}. Consequently, \(\nu:=Z^{-1}\Omega_N\) is a smooth invariant probability measure. Moreover, \(\nu=\Phi_*\mu\). 
\end{proposition}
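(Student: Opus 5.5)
The plan is a direct local computation in the new tube, followed by global bookkeeping.

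\emph{Local formula.} On the retained part $0<r<r_1$ of $U_p$, Proposition~\ref{prop:solid-torus-coordinates} gives $\Omega=dx\wedge dy\wedge ds = r\,dr\wedge d\theta\wedge ds$. From \eqref{eq:coordinate-exchange} we have $d\xi\wedge d\eta = r\,dr\wedge 2\pi\,ds$ and $dz=-d\theta$. Hence
\[
d\xi\wedge d\eta\wedge dz = -2\pi r\,dr\wedge ds\wedge d\theta = 2\pi r\,dr\wedge d\theta\wedge ds,
\]
using one transposition of $ds$ and $d\theta$. Therefore $(\Phi^{-1})^*\Omega=\frac1{2\pi}d\xi\wedge d\eta\wedge dz$ on the new tube minus its core. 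Since the coordinates $(\xi,\eta,z)$ extend smoothly across $r=r_1$ by the chosen smooth structure, the same formula holds on a neighbourhood of the gluing torus. The constant-coefficient expression is smooth and nonvanishing at $\xi=\eta=0$, so $\Omega_N$ extends to a smooth positive volume form on $N$. Positivity elsewhere follows because $\Phi$ is a diffeomorphism and $\Omega$ is positive. The orientation sign is the point to double-check; if it came out negative one would reorient the new tube, but the computation above shows it is positive.

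\emph{Invariance.} On $N\setminus K$, $\Phi$ conjugates $X_0$ to $Y$ and carries $\Omega$ to $\Omega_N$. Hence
\[
\mathcal L_Y\Omega_N=(\Phi^{-1})^*\mathcal L_{X_0}\Omega=0
\]
by Proposition~\ref{prop:seed-volume}. As an alternative check, in the new tube $\operatorname{div}Y$ with respect to $d\xi\wedge d\eta\wedge dz$ is $\lambda\cos2z+\lambda\cos2z-2\lambda\cos2z=0$ by \eqref{eq:new-vector-field}. Since $\mathcal L_Y\Omega_N$ is a smooth $3$-form vanishing on the dense open set $N\setminus K$, it vanishes identically, so $\Psi^t$ preserves $\Omega_N$.

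\emph{Mass and pushforward.} $K$ is a finite union of embedded circles and therefore has $\Omega_N$-measure zero. Likewise $\Gamma$ has $\Omega$-measure zero. By the change of variables for the diffeomorphism $\Phi:M\setminus\Gamma\to N\setminus K$,
\[
\int_N\Omega_N=\int_{N\setminus K}(\Phi^{-1})^*\Omega=\int_{M\setminus\Gamma}\Omega=Z.
\]
The same identity applied to $\int_{\Phi^{-1}(A\setminus K)}\Omega$ gives $\nu(A)=\mu(\Phi^{-1}(A))$ for every Borel set $A\subset N$, which is $\nu=\Phi_*\mu$. Here $\Phi$ is regarded as defined $\mu$-almost everywhere. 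It follows that $\nu=Z^{-1}\Omega_N$ is a smooth invariant probability measure.

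The only step needing care is the orientation and sign bookkeeping in the wedge computation. Everything else is formal.
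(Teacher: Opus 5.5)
Your proposal is correct and follows the paper's proof essentially step by step: the same wedge computation $\frac{1}{2\pi}\,d\xi\wedge d\eta\wedge dz=r\,dr\wedge d\theta\wedge ds=\Omega$, extension across the core because the coefficients are constant, and the measure-zero sets $\Gamma$ and $K$ to get total mass $Z$ and $\nu=\Phi_*\mu$. The only cosmetic difference is in the invariance step. You deduce it across $K$ from $\mathcal{L}_Y\Omega_N=0$ on the dense set $N\setminus K$ plus continuity, whereas the paper computes $\operatorname{div}_{\Omega_N}Y=0$ directly in the new tube; you also give that computation as a check.
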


\begin{proof}
On the punctured old tube, by \eqref{eq:solid-torus-normal-form}
\[
\Omega
=
dx\wedge dy\wedge ds
=
r\,dr\wedge d\theta\wedge ds.
\]
From \eqref{eq:coordinate-exchange},
\[
d\xi\wedge d\eta
=
2\pi r\,dr\wedge ds,
\qquad
dz=-d\theta.
\]
Hence
\[
\frac1{2\pi}
d\xi\wedge d\eta\wedge dz
=
r\,dr\wedge d\theta\wedge ds
=
\Omega.
\]
Therefore \eqref{eq:new volume form} is exactly the pushed-forward volume form on the punctured tube. Since its coefficients are constant, it extends smoothly and remains nonzero at $\xi=\eta=0$.
Hence, by \eqref{eq:new-vector-field} and direct computation, 
\[
\operatorname{div}_{\Omega_N} Y
=
\lambda\cos(2z)+\lambda\cos(2z)-2\lambda\cos(2z)
=0
\]
in every new tube. Outside the tubes $\operatorname{div}_{\Omega_N} Y= \operatorname{div}_{\Omega} X_0=0$ by $Y=\Phi_* X_0$. Both $\Gamma$ and $K$ are finite unions of smooth curves and therefore have zero volume. Hence
\[
\int_N\Omega_N
=
\int_{N\setminus K}\Omega_N
=
\int_{M\setminus\Gamma}\Omega
=
\int_M\Omega
=
Z.
\]
Hence, $\nu:=Z^{-1}\Omega_N$ defines an invariant probability measure on $N$ for $\Psi^t$. The change of variables gives
$\nu=\Phi_*\mu$.
\end{proof}

\subsection{The equilibria on the new cores}

The vector field $Y$ is now defined smoothly on all of $N$.
We next determine its equilibria and verify that they are hyperbolic.

\begin{proposition}
\label{prop:new-equilibria}
$Y$ has exactly $16$ hyperbolic equilibria. The zero set of $Y$ is
\[
\{e_{p,k}:p\in P,\ k=0,1,2,3\},
\quad
e_{p,k}=(0,0,k\pi/2),
\]
where \(e_{p,k}\) is expressed in the coordinates of the \(p\)-th new tube.
\end{proposition}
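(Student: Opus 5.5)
The plan is to split $N$ into the part $N\setminus K$ and the four new cores $K$, and to treat each separately.

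\emph{No zeros off the cores.} On $N\setminus K$ we have $Y=\Phi_*X_0$ with $\Phi$ a diffeomorphism by Proposition~\ref{prop:complement-map}. The seed field $X_0=\partial_s$ vanishes nowhere on $M$. Hence $Y$ has no zeros on $N\setminus K$, and every equilibrium lies on some core $\{\xi=\eta=0\}$.

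\emph{Zeros on a core.} In the coordinates of the $p$-th new tube, formula \eqref{eq:new-vector-field} at $\xi=\eta=0$ reduces to $Y=-\lambda\sin(2z)\,\partial_z$. Its zeros with $z\in\mathbb R/(2\pi\mathbb Z)$ are exactly $z=k\pi/2$ for $k=0,1,2,3$. This gives four zeros per tube, and sixteen in total since there are four tubes, disjoint because the $U_p$ are.

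\emph{Hyperbolicity.} At $e_{p,k}$ the linearization of \eqref{eq:new-vector-field} is computed directly. Since $\xi$ and $\eta$ vanish there, the $z$-derivatives of the $\xi$- and $\eta$-components vanish. The $\xi$- and $\eta$-components are linear in $(\xi,\eta)$, so the $(\xi,\eta)$-derivatives of the $z$-component vanish as well. With $c:=\cos(k\pi)=(-1)^k$, the Jacobian is therefore block diagonal,
\[
DY(e_{p,k})=\begin{pmatrix}\lambda c&-2\pi&0\\ 2\pi&\lambda c&0\\ 0&0&-2\lambda c\end{pmatrix}.
\]
Its eigenvalues are $\lambda c\pm 2\pi i$ and $-2\lambda c$. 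All have real part $\pm\lambda$ or $\mp2\lambda$, and these are nonzero since $\lambda=\log\Lambda>0$. Hence every equilibrium is hyperbolic. For even $k$ the index is a two-dimensional unstable focus with one stable direction, and for odd $k$ the roles are reversed. The eigenvalues sum to zero, which is consistent with $\operatorname{div}Y=0$.

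\emph{Main obstacle.} There is no serious obstacle. The only point needing care is the first step: one must use that $\Phi$ is a genuine diffeomorphism on all of $N\setminus K$, including across the gluing tori $r=r_1$. This is exactly the content of Proposition~\ref{prop:complement-map}.
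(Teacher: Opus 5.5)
Your proof is correct and essentially the paper's: zeros lie only on the cores, where $Y=-\lambda\sin(2z)\,\partial_z$ gives $z=k\pi/2$, and the block-diagonal linearization has eigenvalues $(-1)^k\lambda\pm2\pi i$ and $2(-1)^{k+1}\lambda$. The only difference is that you rule out zeros in the punctured new tubes via $Y=\Phi_*X_0$ on all of $N\setminus K$, while the paper uses the determinant $\lambda^2\cos^2(2z)+4\pi^2>0$ inside each tube; both work. One small slip: the $(\xi,\eta)$-derivatives of the $z$-component vanish because that component, $-\lambda\sin(2z)$, depends only on $z$, not because the $\xi$- and $\eta$-components are linear in $(\xi,\eta)$.
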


\begin{proof}
Let $q=(\xi,\eta,z)$ be an equilibrium of $Y$ in one of the new tubes. By \eqref{eq:new-vector-field}, the first two components give 
\[
\begin{pmatrix}
\lambda\cos(2z) & -2\pi\\
2\pi & \lambda\cos(2z)
\end{pmatrix}
\binom{\xi}{\eta}
=0.
\]
Since the determinant is $\lambda^2\cos^2(2z)+4\pi^2>0,$
the matrix is invertible. Hence, $\xi=\eta=0.$
The third component then gives $\sin(2z)=0$, so $
z=0,\frac{\pi}{2},\pi,\frac{3\pi}{2}\pmod{2\pi}.$ Outside the new tubes, $Y=\Phi_*X_0$, and $X_0$ has no zeros.
Hence these are all equilibria of $Y$.
By direct computation, the eigenvalues of $DY(e_{p,k})$ are $(-1)^k\lambda\pm2\pi i$ and $2 (-1)^{k+1}\lambda.$ 
Since $\lambda>0$, every equilibrium is hyperbolic.
\end{proof}

\subsection{Conjugacy with the same elapsed time}

The union \(\Gamma\) of the old core orbits is invariant under $\phi^t$. On each new core $K_p=\{\xi=\eta=0\}$, the vector field reduces to $Y=-\lambda\sin(2z)\partial_z$, so $K$ is invariant under $\Psi^t$. By uniqueness of solutions, their complements $M\setminus\Gamma$ and $N\setminus K$ are invariant as well. We now show that $\Phi$ sends each old trajectory to the corresponding new trajectory without changing the time parameter.

\begin{proposition}
\label{prop:same-time-conjugacy}
For every $x\in M\setminus\Gamma$ and $t\in\mathbb R$,
\begin{equation}\label{eq:two flows time relation}
\Phi(\phi^t x)=\Psi^t(\Phi(x)).
\end{equation}
Thus the two flows are conjugate on the complements of the cores with the same elapsed time.
\end{proposition}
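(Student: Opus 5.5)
The plan is to derive the conjugacy from the fact that $Y=\Phi_*X_0$ on $N\setminus K$, combined with uniqueness of integral curves. Fix $x\in M\setminus\Gamma$ and define the curve $c(t):=\Phi(\phi^t x)$ for $t\in\mathbb R$. This curve is well defined for all $t$, because $M\setminus\Gamma$ is $\phi^t$-invariant: $\Gamma$ is a union of periodic orbits, so by uniqueness no trajectory starting off $\Gamma$ can enter it. Since $\Phi$ is a diffeomorphism $M\setminus\Gamma\to N\setminus K$ by Proposition~\ref{prop:complement-map}, the curve $c$ is smooth and lies in $N\setminus K$. The chain rule and \eqref{eq:def of Y} then give
\[
c'(t)=D\Phi_{\phi^t x}\bigl(X_0(\phi^t x)\bigr)=Y_{\Phi(\phi^t x)}=Y(c(t)),
\qquad c(0)=\Phi(x).
\]

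Next, I would use that the formula \eqref{eq:new-vector-field} and the pushforward $\Phi_*X_0$ coincide on $N\setminus K$, since the extension in Proposition~\ref{prop:new-vector-field} only adds values on $K$. Hence $c$ is an integral curve of the smooth vector field $Y$ on $N$ passing through $\Phi(x)$ at time $0$. The curve $t\mapsto\Psi^t(\Phi(x))$ is another such integral curve, and it is defined for all $t$ because $N$ is compact. Uniqueness of solutions for smooth (in particular locally Lipschitz) ODEs then yields $c(t)=\Psi^t(\Phi(x))$ for all $t\in\mathbb R$, which is \eqref{eq:two flows time relation}. As a byproduct, this recovers the invariance of $N\setminus K$ under $\Psi^t$.

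The main point needing care is verifying that $D\Phi(X_0)=Y$ holds everywhere on $M\setminus\Gamma$, including across the gluing tori $r=r_1$, rather than only on the separate pieces. On the unchanged part, $\Phi$ is the identity, so the identity holds trivially. On $0<r<r_1$, it is precisely the computation in the proof of Proposition~\ref{prop:new-vector-field}. At $r=r_1$, the smooth structure was chosen so that the coordinates $(\xi,\eta,z)$ extend across the gluing tori, and continuity gives agreement there. With this in hand, the remaining argument is routine, and no reparametrization of time is needed.
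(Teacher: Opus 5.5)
Your proposal is correct and follows essentially the same argument as the paper. Both use the invariance of $M\setminus\Gamma$, then the chain rule with $Y=\Phi_*X_0$ to show that $t\mapsto\Phi(\phi^t x)$ is an integral curve of $Y$ through $\Phi(x)$, and then uniqueness of solutions; your extra check across the gluing tori is harmless but not needed, since $Y$ is defined as $\Phi_*X_0$ on all of $N\setminus K$ by \eqref{eq:def of Y}.
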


\begin{proof}
Fix $x\in M\setminus\Gamma$ and set
$\gamma(t)=\Phi(\phi^t x).$
Since $M\setminus\Gamma$ is invariant, $\gamma(t)$ is defined for all
$t\in\mathbb R$. Using $Y=\Phi_*X_0$,
\[
\dot\gamma(t)
=
D\Phi_{\phi^t x}\bigl(X_0(\phi^t x)\bigr)
=
Y(\gamma(t)).
\]
Moreover, $\gamma(0)=\Phi(x)$. By uniqueness of solutions of the ordinary differential equation generated by $Y$,
$\gamma(t)=\Psi^t(\Phi(x)).$
Hence \eqref{eq:two flows time relation} holds.
\end{proof}

\section{Identification of the surgered manifold}\label{sec:topology}

The local construction determines a smooth manifold $N$, but does not yet
identify its topology. We do this through the free action of $(\Z/2\Z)^2$. After removing the variable roof from the description of $M$, the quotient surgery becomes a familiar filling of a punctured-torus mapping torus. Its result is the Hantzsche--Wendt manifold $W$.
It remains to show that the particular four-sheeted cover $N\to W$ is the
translation cover, whose total space is $\T^3$. Notice that $W$ itself is
not a three-torus.

\subsection{The mapping torus and the filling slope}

For a torus $S$, a diffeomorphism $C:S\to S$, and a smooth positive function $h:S\to(0,\infty)$, write
\[
M_h(C;S)
=
\bigl\{(q,s):q\in S,\ 0\le s\le h(q)\bigr\}
\big/\bigl((q,h(q))\sim(Cq,0)\bigr),
\]
and denote its points by $[q,s]_h$. In particular,
\[
M_1(C;S)
=
\bigl(S\times[0,1]\bigr)
\big/\bigl((q,1)\sim(Cq,0)\bigr).
\]
In this notation,
\[
M=M_\rho(B;\widehat\Sigma).
\]
Set $\Sigma=\R^2/\Z^2$ and let $\pi:\widehat\Sigma\to\Sigma$ be the natural projection. Since $B\in\mathrm{SL}(2,\Z)$ and $\rho$ is $\Z^2$-periodic,
they induce a diffeomorphism and a smooth roof function on $\Sigma$, denoted by the same symbols. 
Thus
\[
\pi\circ B=B\circ\pi,
\qquad
\rho\circ\pi=\rho.
\]
The induced map $B:\Sigma\to\Sigma$ fixes $0$. 
By Lemma~\ref{lem:seed-roof}, the induced roof function $\rho$ satisfies $\rho\ge1$ on $\Sigma$ and $\rho\equiv1$ in a neighborhood of the origin.

\begin{proposition}\label{prop:top-roof}
	There is a smooth diffeomorphism
	\[
	F_\Sigma:M_1(B;\Sigma)\longrightarrow M_\rho(B;\Sigma)
	\]
	satisfying
	\[
	F_\Sigma([x,u]_1)=[x,u]_\rho
	\quad\text{whenever }\rho(x)=1.
	\]
	In particular, $F_\Sigma$ is the identity in the common
	suspension coordinates near the orbit over $0\in\Sigma$.
\end{proposition}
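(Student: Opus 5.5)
The plan is to build $F_\Sigma$ as a fibrewise rescaling of the suspension coordinate. The naive choice $[x,u]_1\mapsto[x,\rho(x)u]_\rho$ is a homeomorphism, but it is not smooth across the seam. Near $u=1$ it reads $u\mapsto\rho(x)u$, while near $u=0$ on the image side it reads $u\mapsto\rho(Bx)u$, and these need not match up to all orders. I will therefore replace linear scaling by a smooth monotone reparametrisation that is the identity near both ends of each fibre interval.

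\textbf{The reparametrisation.} Fix a smooth function $\sigma:\R\to[0,1]$ with
\[
\sigma\equiv0 \text{ on } (-\infty,1/4], \qquad \sigma\equiv1 \text{ on } [3/4,\infty), \qquad \sigma'\ge0 .
\]
On the strip $\Sigma\times\R$ define
\[
\widetilde F(x,u)=\bigl(x,\;u+(\rho(x)-1)\,\tau(u)\bigr),
\]
where $\tau:\R\to\R$ is smooth, satisfies $\tau'\ge0$, and obeys $\tau(u+1)=\tau(u)+1$. The simplest choice is
\[
\tau(u)=\lfloor u\rfloor+\sigma(u-\lfloor u\rfloor).
\]
This $\tau$ is smooth, because $\sigma$ is constant near $0$ and near $1$.

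\textbf{Descending to the quotient.} The cleanest way to check that $\widetilde F$ descends is to work on fundamental domains rather than with a deck-group intertwining. For $u\in[0,1]$ set
\[
f_x(u)=u+(\rho(x)-1)\sigma(u).
\]
This map has the following properties.
\begin{itemize}
\item It is smooth in $(x,u)$.
\item Its derivative is $f_x'=1+(\rho-1)\sigma'\ge1$, using $\rho\ge1$, so it is strictly increasing.
\item It sends $[0,1]$ onto $[0,\rho(x)]$.
\item It equals the identity $u$ near $u=0$ and equals $u+\rho(x)-1$ near $u=1$.
\end{itemize}
Now define $F_\Sigma([x,u]_1)=[x,f_x(u)]_\rho$.

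\textbf{Well-definedness and smoothness at the seam.} The point $[x,1]_1=[Bx,0]_1$ maps to $[x,\rho(x)]_\rho$ and to $[Bx,0]_\rho$, which agree. For smoothness, take a collar chart of $M_1$ near the seam, given by $(x,u)$ with $u\in(1-\epsilon,1+\epsilon)$, and use the corresponding collar of $M_\rho$ near $\{u=\rho(x)\}$. In these charts $F_\Sigma$ is $(x,u)\mapsto(x,u+\rho(x)-1)$ on both sides. The reason is that for $u$ slightly above $1$, the point is $[Bx,u-1]_1$, which maps to $[Bx,u-1]_\rho=[x,\rho(x)+u-1]_\rho$. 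Hence $F_\Sigma$ is smooth with smooth inverse across the seam.

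\textbf{Bijectivity.} On the interior of each fibre interval, $F_\Sigma$ is a diffeomorphism because $f_x$ is a strictly increasing smooth bijection with positive derivative. Bijectivity of $F_\Sigma$ then follows fibrewise.

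\textbf{The normalization.} Where $\rho(x)=1$ we have $f_x(u)=u$, so $F_\Sigma([x,u]_1)=[x,u]_\rho$, as required. Since $\rho\equiv1$ on a neighbourhood $V$ of $0$ and $B$ fixes $0$, the suspension coordinates near the orbit over $0$ use points $x$ in $V$ and in $B^{\pm1}V$ near the seam. There $F_\Sigma$ is literally the identity.

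\textbf{Main obstacle.} The only delicate point is the seam smoothness, namely that the collar charts on the two sides are compatible. This is exactly why $\sigma$ must be locally constant near $0$ and $1$. With that choice, $F_\Sigma$ reduces to the identity on one side and to translation by $\rho(x)-1$ on the other, and both pieces are smooth.
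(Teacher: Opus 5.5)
Your proof is correct and is essentially the paper's argument: the paper uses the same fibrewise reparametrisation $H(x,u)=u+(\rho(x)-1)c(u)$, with $c$ nondecreasing, zero near $0$ and one near $1$, and it checks smoothness in seam charts exactly as you do. One small remark: your preliminary lift $\widetilde F$ on $\Sigma\times\mathbb R$ built from $\tau$ does not actually descend, because it fails to intertwine $(x,u)\mapsto(Bx,u-1)$ with $(x,s)\mapsto(Bx,s-\rho(x))$ unless $\rho(Bx)=\rho(x)$; this is harmless, since you never use it and the fundamental-domain definition $F_\Sigma([x,u]_1)=[x,f_x(u)]_\rho$ does all the work.
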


\begin{proof}
	Choose a smooth nondecreasing function $c:[0,1]\to[0,1]$ that is zero near zero and one near one. Set
	\begin{equation}\label{eq:top-height}
		H(x,u)=u+(\rho(x)-1)c(u).
	\end{equation}
	Then
	\[
	\partial_uH(x,u)
	=
	1+(\rho(x)-1)c'(u)\ge1,
	\qquad
	H(x,0)=0,
	\qquad
	H(x,1)=\rho(x).
	\]
	Thus $H(x,\cdot)$ is a bijection from $[0,1]$ onto $[0,\rho(x)]$, with inverse depending smoothly on $(x,s)$.
	Since
	\[
	[x,H(x,1)]_\rho=[Bx,H(Bx,0)]_\rho,
	\]
	the formula
	\[
	F_\Sigma([x,u]_1)=[x,H(x,u)]_\rho
	\]
	defines a bijection, smooth with smooth inverse away from the suspension seam.
	
	For sufficiently small $\varepsilon>0$, use the seam charts
	\[
	\chi_h(x,t)=
	\begin{cases}
		[x,t]_h,
		& 0\le t<\varepsilon,\\[2pt]
		[B^{-1}x,h(B^{-1}x)+t]_h,
		& -\varepsilon<t<0,
	\end{cases}
	\qquad h\in\{1,\rho\},
	\]
	where $x$ ranges over a small coordinate neighborhood in $\Sigma$.
	The endpoint conditions on $c$ give
	\[
	H(x,t)=t
	\quad (0\le t<\varepsilon),
	\qquad
	H(B^{-1}x,1+t)=\rho(B^{-1}x)+t
	\quad (-\varepsilon<t<0).
	\]
	Hence
	\[
	\chi_\rho^{-1}\circ F_\Sigma\circ\chi_1(x,t)=(x,t).
	\]
	Therefore $F_\Sigma$ and its inverse are smooth across the seam.
	
	Finally, $\rho(x)=1$ implies $H(x,u)=u$. Since $\rho=1$
	near $0$, this proves the asserted local identity.
\end{proof}

The map $F_\Sigma$ is used only to identify the underlying smooth manifolds; it does not, in general, conjugate their natural suspension flows with the same time parameter.
The comparison of elapsed times and correlations uses the conjugacy $\Phi$ from Proposition~\ref{prop:same-time-conjugacy}.

For $p\in P$, use the coordinates $(x,y,s)$ of Proposition~\ref{prop:solid-torus-coordinates} and write
$x=r\cos\theta$, $y=r\sin\theta$, as in
Section~\ref{subsec:the coordinate exchange}.
On $\partial \mathcal M_p=\{r=r_1\}$, orient the meridian $m_{\mathrm{old}}$ by increasing $\theta$ at fixed $s$,
and the longitude $\ell_{\mathrm{old}}$ by increasing $s$ at fixed $\theta$.
On $\partial\widetilde { \mathcal{M}}_p=\{\xi^2+\eta^2=r_1^2\}$,
orient the meridian $m_{\mathrm{new}}$ counterclockwise in the $(\xi,\eta)$-plane at fixed $z$, and the longitude
$\ell_{\mathrm{new}}$ by increasing $z$ at fixed $(\xi,\eta)$.

A slope is the unoriented isotopy class of an essential simple closed curve on a boundary torus. Replacing $\Gamma_p$ by its circle of transverse directions
gives a boundary torus, which we identify with $\partial \mathcal M_p$ using $(\theta,s)$. The \emph{degeneracy slope} is the slope represented by the
suspension of a stable or unstable eigenray on this torus.

\begin{proposition}\label{prop:top-slope}
	For every $p\in P$, the inverse of the gluing map $g_p:\partial \mathcal M_p\longrightarrow\partial\widetilde{\mathcal{M}}_p$ defined in \eqref{eq:def of gp} induces an isomorphism
    \[
    (g_p^{-1})_*:    H_1(\partial\widetilde{\mathcal{M}}_p;\Z)
    \longrightarrow H_1(\partial \mathcal M_p;\Z)
    \]
    satisfying
	\begin{equation}
		\label{eq:top-slope}
		(g_p^{-1})_*[m_{\mathrm{new}}]
		=[\ell_{\mathrm{old}}],
		\qquad
		(g_p^{-1})_*[\ell_{\mathrm{new}}]
		=-[m_{\mathrm{old}}].       
	\end{equation}
	Consequently, the filling is along the primitive degeneracy
	slope represented by $\ell_{\mathrm{old}}$, with no
	meridional twist. 
	
	Moreover, under the three-sheeted suspension covering
	\[
	C_3:M_1(B;\Sigma)\longrightarrow M_1(A;\Sigma),
	\qquad B=A^3,
	\]
	the framing near the orbit over $0$ defined by \eqref{eq:solid-torus-chart} is the pullback of the
	corresponding framing for $A$.
\end{proposition}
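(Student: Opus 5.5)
The first claim, \eqref{eq:top-slope}, follows from evaluating $g_p^{-1}$ on representative loops. By \eqref{eq:def of gp}, $g_p^{-1}$ sends $(r_1\cos\phi, r_1\sin\phi, z)$ to the point with $s=\phi/2\pi$ and $\theta=-z$. Take $m_{\mathrm{new}}$: $\phi$ runs from $0$ to $2\pi$ with $z$ fixed. Its preimage has $\theta$ fixed and $s$ increasing once around $\mathbb R/\mathbb Z$, which is exactly $\ell_{\mathrm{old}}$ with its orientation. Take $\ell_{\mathrm{new}}$: $z$ increases with $(\xi,\eta)$ fixed. Its preimage has $s$ fixed and $\theta=-z$ decreasing, which is $-m_{\mathrm{old}}$. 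Since $g_p$ is a diffeomorphism of tori, these two classes form a basis, and $(g_p^{-1})_*$ is the isomorphism with matrix $\begin{pmatrix}0&-1\\1&0\end{pmatrix}$ in the bases $(m,\ell)$. The new meridian, which bounds a disk in $\widetilde{\mathcal M}_p$, is glued to the primitive class $[\ell_{\mathrm{old}}]$. So the filling slope is $\ell_{\mathrm{old}}$, and because the image is exactly $\pm\ell_{\mathrm{old}}$ with no multiple of $m_{\mathrm{old}}$ added, there is no meridional twist.

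Next, $\ell_{\mathrm{old}}$ must be identified with the degeneracy slope. The unstable eigenray at $p$ is $\{v=0,u>0\}$ in the chart $\kappa_p$, and by \eqref{eq:solid-torus-chart} this is $\{y=0,x>0\}$ for every $s$, because the scalings $e^{\mp\lambda s}$ preserve the coordinate axes and their signs. Its suspension is therefore the curve $\theta=0$, $s\in\mathbb S^1$, in the torus of transverse directions. With the identification by $(\theta,s)$, this curve is isotopic to $\ell_{\mathrm{old}}$. The stable ray $\theta=\pi/2$ gives the same slope. A point to note is that $B$ has positive eigenvalues, so the rays are not flipped after one period; this is why the suspended ray closes up after a single turn of $s$ and is primitive.

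For the covering statement, let $\lambda_A=\log\Lambda_A$ with $\Lambda_A=(3+\sqrt5)/2$, so that $\lambda=3\lambda_A$. The matrix $Q$ diagonalizes $A$ as well as $B$, since $A$ and $B$ commute and $B$ has distinct eigenvalues. The cover $C_3:[q,s]_1^{B}\mapsto[q,s \bmod 1]_1^{A}$ maps $s\in\mathbb R/3\mathbb Z$ to $\mathbb R/\mathbb Z$; the $B$-mapping torus is written with period $3$ in the $A$-normalization. The plan is to write the $A$-framing chart $\Psi^A(x,y,[s'])=[\kappa(e^{-\lambda_A s'}x,e^{\lambda_A s'}y),s']$, use $s'=3s$, and check that $\lambda_A s'=\lambda s$. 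Then $\Psi^A\circ(\mathrm{id}\times 3)$ and $C_3\circ\Psi_0$ agree, after reconciling the time normalization between the unit roof of $B$ and the three unit roofs of $A$.

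The main obstacle is this normalization. The covering must be set up so that one $B$-roof corresponds to three $A$-roofs, which means rescaling the suspension parameter by $3$. Under that rescaling the vector fields differ by the constant factor $3$, but the framing, meaning the trivialization $(x,y)$ of the normal bundle, is unchanged because the conjugating exponents match. I would state this carefully and only then conclude that the framing is a pullback.
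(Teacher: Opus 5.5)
Your proposal is correct and follows essentially the same route as the paper: the same loop-by-loop evaluation of $(g_p^{-1})_*$, the same identification of the suspended eigenrays $\theta=\mathrm{const}$ with $\ell_{\mathrm{old}}$, and the same exponent matching $\lambda_A\cdot 3s=\lambda s$ for the framing. (You read the eigenray identification directly off the chart, where the paper uses $\dot\theta=-\lambda\sin(2\theta)$ and Zung's convention $m_i=1$, $\omega_i=0$.) The only repair needed is in your formula for the cover, which must carry the monodromy, $C_3([q,s]_1)=[A^{j}q,\,3s-j]_1$ with $j=\lfloor 3s\rfloor$ as in the paper; your shorthand $[q,s\bmod 1]$ drops the factor $A^{j}$ and is not well defined across the seams.
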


\begin{proof}
	On $\partial M_p$, the coordinate
	exchange~\eqref{eq:coordinate-exchange} is
	\[
	\xi=r_1\cos(2\pi s),\qquad
	\eta=r_1\sin(2\pi s),\qquad
	z=-\theta.
	\]
	A new meridian pulls back to a curve increasing $s$ by one at fixed $\theta$, whereas a new longitude pulls back to a curve decreasing $\theta$ by $2\pi$ at fixed $s$.
	This proves~\eqref{eq:top-slope}.
	
	By~\eqref{eq:solid-torus-normal-form}, the induced flow on the circle of transverse directions satisfies
	\[
	\dot\theta=-\lambda\sin(2\theta),
	\qquad
	\dot s=1.
	\]
	The stable and unstable eigenrays occur at
	$\theta=0,\pi/2,\pi,3\pi/2$.
	Each suspends to a closed curve $s\mapsto(\theta_0,s)$,
	representing $[\ell_{\mathrm{old}}]$ under the boundary identification above.
	This class is primitive because
	$[m_{\mathrm{old}}]$ and $[\ell_{\mathrm{old}}]$
	form a basis of $H_1(\partial M_p;\mathbb{Z})$.
	Since $B$ fixes $p$ and preserves each eigenray, the period and prong rotation in \cite[Convention~2.4]{Zung} are $m_i=1$ and
	$\omega_i=0$. Hence the degeneracy slope is the primitive class $[\ell_{\mathrm{old}}]$,
	corresponding to the zero slope $(0;1)$ in that convention.
	
	For the framing comparison, use the matrix $Q$ of
	\eqref{eq:charts-diagonalization}.
	Since $A$ is symmetric positive definite and $B=A^3$, $Q^{-1}AQ
	=\operatorname{diag}(e^{\lambda/3},e^{-\lambda/3}).$
	For $0\le s<1$, set $j=\lfloor3s\rfloor$ and $t=3s-j$.
	The suspension covering is
	\[
	C_3([q,s]_1)=[A^j q,t]_1.
	\]
	It is well defined by the suspension identifications and the relation $A^3=B$.
	In the common eigenbasis $Q$, the base representative in~\eqref{eq:solid-torus-chart} transforms as
	\[
	A^j Q
	\begin{pmatrix}
		e^{-\lambda s}x\\
		e^{\lambda s}y
	\end{pmatrix}
	=
	Q
	\begin{pmatrix}
		e^{-(\lambda/3)t}x\\
		e^{(\lambda/3)t}y
	\end{pmatrix}.
	\]
	Thus, in the corresponding solid-torus coordinates,
	\[
	C_3(x,y,s)=(x,y,3s\bmod1).
	\]
	In particular, the covering preserves the transverse coordinates $(x,y)$, proving the claimed compatibility
	of framings.
\end{proof}

\subsection{A regular four-sheeted quotient}

We now use the integer translations on the base to reduce the four surgeries on $M$ to a single filling on a quotient.
Let
\[
G=\mathbb Z^2/(2\mathbb Z)^2
\cong(\mathbb Z/2\mathbb Z)^2.
\]
For $v\in G$, consider the translation
\[
\widetilde\tau_v:
\widehat\Sigma\times\mathbb R
\longrightarrow
\widehat\Sigma\times\mathbb R,
\qquad
\widetilde\tau_v(q,s)=(q+v,s).
\]
\begin{proposition}
\label{prop:top-translations}
These translations descend to a free smooth action of $G$ on $M$, given by
\[
\tau_v([q,s]_\rho)=[q+v,s]_\rho.
\]
The action preserves $\Omega$ and commutes with $\phi^t$.
\end{proposition}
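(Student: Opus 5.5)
The plan is to work entirely on the cover $\widehat\Sigma\times\mathbb R$ and check that $\widetilde\tau_v$ is compatible with the deck generator $F(q,s)=(Bq,s-\rho(q))$ from \eqref{eq:def of group}.

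\emph{Commutation with $F$.} Compute
\[
F\circ\widetilde\tau_v(q,s)=(Bq+Bv,\ s-\rho(q+v)),\qquad \widetilde\tau_v\circ F(q,s)=(Bq+v,\ s-\rho(q)).
\]
Since $\rho$ is $\mathbb Z^2$-periodic (Lemma~\ref{lem:seed-roof}), we have $\rho(q+v)=\rho(q)$. Since $B\in SL(2,\mathbb Z)$, we have $Bv\in\mathbb Z^2$. The remaining point is $Bv\equiv v \pmod{(2\mathbb Z)^2}$. This holds because $B\equiv I\pmod 2$ for $B=\begin{pmatrix}13&8\\8&5\end{pmatrix}$; equivalently, $B$ fixes every point of $P$ (Lemma~\ref{lem:seed-base}).

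\emph{Descent to an action.} From the commutation, $\widetilde\tau_v$ commutes with every element of $\langle F\rangle$. Hence it descends to a diffeomorphism $\tau_v$ of $M$, and $\tau_v([q,s]_\rho)=[q+v,s]_\rho$. Moreover $\widetilde\tau_v\widetilde\tau_w=\widetilde\tau_{v+w}$, and $\widetilde\tau_v$ is the identity exactly when $v\in(2\mathbb Z)^2$. So we get a smooth action of $G$. Smoothness of $\tau_v$ is automatic, since $\widetilde\tau_v$ is smooth and the quotient map is a local diffeomorphism.

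\emph{Freeness.} Suppose $\tau_v[q,s]=[q,s]$. Then $(q+v,s)=F^n(q,s)$ for some $n\in\mathbb Z$, so $q+v=B^nq$ and $s=s-\sum_{j=0}^{n-1}\rho(B^jq)$ (or the analogous sum for $n<0$). Since $\rho\ge1>0$, the second equation forces $n=0$. Then $q+v=q$ in $\widehat\Sigma$, so $v=0$ in $G$.

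\emph{Invariance properties.} Translations satisfy $\widetilde\tau_v^*(dq_1\wedge dq_2\wedge ds)=dq_1\wedge dq_2\wedge ds$, so $\tau_v$ preserves $\Omega$. Also $\widetilde\tau_v$ commutes with $T_t(q,s)=(q,s+t)$, so $\tau_v\circ\phi^t=\phi^t\circ\tau_v$.

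The only genuinely nonformal step is the mod-$2$ congruence $B\equiv I$. It is this congruence that makes integer translations commute with $B$ on $\widehat\Sigma$, and the rest of the argument is routine verification.
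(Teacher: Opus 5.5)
Your proposal is correct and follows essentially the same route as the paper: commutation of $\widetilde\tau_v$ with $F$ via $\mathbb Z^2$-periodicity of $\rho$ and $(B-I)\mathbb Z^2\subset(2\mathbb Z)^2$, freeness because $F^n$ shifts $s$ by a nonzero amount for $n\neq0$, and preservation of $\Omega$ and commutation with $\phi^t$ because the translations act only in $q$. Your explicit check that $B\equiv I \pmod 2$ spells out a congruence the paper states without verification.
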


\begin{proof}
We first verify the translations commute with the identification map $F$ defined in \eqref{eq:def of group}.
Since $\rho$ is $\mathbb Z^2$-periodic and $(B-I)v\in(2\mathbb Z)^2$ for $v\in\mathbb G$, this commutation holds.
Thus the translations descend to smooth maps on $M$. The identities
\[
\tau_0=\mathrm{id}_M,
\qquad
\tau_v\circ\tau_w=\tau_{v+w}
\]
show that they define a smooth action of $G$.

If $\tau_v([q,s]_\rho)=[q,s]_\rho$, then there exists $n\in\mathbb Z$ such that 
$(q+v,s)=F^n(q,s)$.
Since $\rho>0$, equality of the second coordinates implies $n=0$. Hence $v=0$ in $G$, so the action is free.

The action preserves $\Omega$ because it is induced by translations in $q$, and it commutes with $\phi^t$ because it leaves $s$ unchanged.
\end{proof}

Let $r:M\to M/G$ denote the quotient map.

\begin{proposition}
	\label{prop:quotient-original}
	There is a natural diffeomorphism
	\[
	\mathcal Q:M/G\longrightarrow M_\rho(B;\Sigma),
	\quad
	\mathcal Q(r([q,s]_\rho))=[\pi(q),s]_\rho.
	\]
	Consequently,
	\begin{equation}\label{eq:quotient-unit-roof}
		M/G\cong M_1(B;\Sigma).
	\end{equation}
	Under these identifications, the four selected periodic orbits project to the suspension orbit over $0\in\Sigma$.
\end{proposition}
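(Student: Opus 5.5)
The plan is to build $\mathcal Q$ from the covering map $\pi\times\mathrm{id}:\widehat\Sigma\times\R\to\Sigma\times\R$ and check it at the level of the group actions. Note that $M_\rho(B;\Sigma)=(\Sigma\times\R)/\langle F_\Sigma\rangle$, where $F_\Sigma(x,s)=(Bx,s-\rho(x))$ as in \eqref{eq:def of group}. Because $\pi\circ B=B\circ\pi$ and $\rho\circ\pi=\rho$, we have $(\pi\times\mathrm{id})\circ F=F_\Sigma\circ(\pi\times\mathrm{id})$. Hence $(\pi\times\mathrm{id})$ descends to a smooth map $\widetilde{\mathcal Q}:M\to M_\rho(B;\Sigma)$ with $[q,s]_\rho\mapsto[\pi(q),s]_\rho$. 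Since $\pi(q+v)=\pi(q)$ for $v\in G$, this map is constant on $G$-orbits. It therefore factors through $r$, giving the map $\mathcal Q$ of the statement.

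Next I will show that $\mathcal Q$ is a bijection. Surjectivity is immediate, since $\pi$ is onto. For injectivity, suppose $[\pi(q),s]_\rho=[\pi(q'),s']_\rho$. Then $(\pi(q'),s')=F_\Sigma^n(\pi(q),s)$ for some $n\in\Z$. Lifting, $F^n(q,s)=(B^nq,s'')$ with $\pi(B^nq)=\pi(q')$ and $s''=s'$, because the second coordinate only involves $\rho$, which is $\Z^2$-periodic. So $q'=B^nq+v$ for some $v\in\Z^2/(2\Z)^2=G$. This means $[q',s']_\rho=\tau_v[q,s]_\rho$ in $M$, so the two points have the same image under $r$.

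For smoothness, I will use that $G$ acts freely and smoothly on $M$ by Proposition~\ref{prop:top-translations}. Hence $r$ is a smooth covering and $M/G$ is a smooth manifold. The map $\widetilde{\mathcal Q}$ is a local diffeomorphism, since locally it is $\pi\times\mathrm{id}$ in suspension charts, and compatibility across the seam is guaranteed by the intertwining relation above. A bijective map $\mathcal Q$ with $\mathcal Q\circ r$ a local diffeomorphism is itself a local diffeomorphism, and hence a diffeomorphism. I expect this seam-chart bookkeeping to be the only mildly delicate point, and it is routine.

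To obtain \eqref{eq:quotient-unit-roof}, I compose with $F_\Sigma^{-1}$ from Proposition~\ref{prop:top-roof}. Finally, each $p\in P$ satisfies $\pi(p)=0$. Therefore $\mathcal Q(r(\Gamma_p))=\{[0,s]_\rho\}$, the orbit over $0$. Since $\rho\equiv1$ near $0$, $F_\Sigma$ is the identity near this orbit, so under \eqref{eq:quotient-unit-roof} it is the unit-roof suspension orbit over $0$. The four orbits $\Gamma_p$ are permuted simply transitively by $G$ through $\tau_v$, so they all project to this one orbit.
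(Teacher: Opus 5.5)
Your proposal is correct and follows essentially the same route as the paper. You descend $\pi\times\mathrm{id}$ through the suspension relation and the $G$-action, prove bijectivity, and conclude from the fact that $r$ and the induced map are local diffeomorphisms. You then compose with the inverse of the map of Proposition~\ref{prop:top-roof} and use $\pi(P)=\{0\}$ and $\rho(0)=1$ to locate the orbits. Your injectivity argument, which lifts $F_\Sigma^n$ to $F^n$, is a more explicit version of the paper's argument with representatives $0\le s<\rho(q)$.

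The only problem is notational. You use $F_\Sigma$ both for the generator $(x,s)\mapsto(Bx,s-\rho(x))$ on $\Sigma\times\R$ and for the diffeomorphism $M_1(B;\Sigma)\to M_\rho(B;\Sigma)$ of Proposition~\ref{prop:top-roof}, so you should rename one of them.
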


\begin{proof}

	The natural projection satisfies
	\[
	\pi^{-1}(\pi(q))=q+G,
	\qquad
	\pi(Bq)=B\pi(q),
	\qquad
	\rho(\pi(q))=\rho(q).
	\]
	Thus the map
	\[
	[q,s]_\rho\longmapsto[\pi(q),s]_\rho
	\]
	is invariant under the $G$-action and respects the suspension relation:
	\[
	[\pi(q),\rho(q)]_\rho
	=
	[B\pi(q),0]_\rho
	=
	[\pi(Bq),0]_\rho.
	\]
	It therefore induces $\mathcal Q$.
	
	Using representatives with $0\le s<\rho(q)$, the identity
	$\pi^{-1}(\pi(q))=q+G$ shows that $\mathcal Q$ is bijective.
	The induced map between the suspensions is a local
	diffeomorphism, since it is induced by
	$\pi\times\operatorname{id}$ in suspension charts.
	The quotient map $r$ is also a local diffeomorphism.
	Hence $\mathcal Q$ is a bijective local diffeomorphism, and therefore
	a diffeomorphism.

    By Proposition~\ref{prop:top-roof}, the map
    \[
    \Psi:=F_\Sigma^{-1}\circ \mathcal Q:
    M/G\longrightarrow M_1(B;\Sigma)
    \]
    is a diffeomorphism, proving~\eqref{eq:quotient-unit-roof}.	
	Finally,
	\[
	P=\pi^{-1}(0),
	\qquad
	\Gamma_p=\{[p,s]_\rho:s\in\R/\Z\},
	\qquad p\in P.
	\]
	Since $\rho(0)=1$ and $F_\Sigma([0,s]_1)=[0,s]_\rho$,
	\[
	\mathcal Q(r([p,s]_\rho))=[0,s]_\rho,
	\qquad
	\Psi(r([p,s]_\rho))=[0,s]_1.
	\]
	Thus all four selected periodic orbits have the asserted common image.
\end{proof}

We now recall some standard terminology from covering theory.

Let $p:E\to V$ be a covering map.
A \emph{deck transformation} of $p$ is a homeomorphism $d:E\to E$ satisfying $p\circ d=p$.
The covering $p$ is called \emph{regular} if its deck transformation group $\operatorname{Deck}(p)$ acts
transitively on each fiber $p^{-1}(v)$, $v\in V$. If $p$ is a smooth covering map, then every deck transformation of $p$ is a diffeomorphism. Throughout this paper, an unbranched covering is a covering map in the usual topological sense, including at the core circles.

We next show that the translation action extends across the surgery.

\begin{proposition}\label{prop:top-regular}
	The translation action on the retained part
	\[
	M\setminus\bigcup_{p\in P}\operatorname{int}\mathcal M_p
	\]
	extends to a free smooth action of $G$ on $N$, again denoted by $\tau_a$, $a\in G$.
	The manifold $N$ is connected, and the quotient map
	\begin{equation}\label{eq:top-regular-cover}
		p_N:N\longrightarrow W':=N/G
	\end{equation}
	is a smooth regular unbranched four-sheeted covering with deck transformation group $G$.
	Moreover, $W'$ is obtained from $M/G$ by the filling of
	Proposition~\ref{prop:top-slope} along the suspension orbit over $0\in\Sigma$.
\end{proposition}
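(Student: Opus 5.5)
The plan is to first understand how $G$ permutes the four tubes and their coordinates, and then to extend the action into the new tubes by the identity in $(\xi,\eta,z)$-coordinates. For $v\in G$ and $p\in P$, the translation $\tau_v$ maps $\Gamma_p$ to $\Gamma_{p+v}$. The charts \eqref{eq:def of kp} are built from lifts $\widetilde p$ and the common matrix $Q$. Choosing $\widetilde{p+v}=\widetilde p+\widetilde v$ modulo $(2\mathbb Z)^2$ gives $\kappa_{p+v}(u,v')=\kappa_p(u,v')+v$. Since $\rho\equiv1$ on all the chart images, \eqref{eq:solid-torus-chart} then gives
\[
\tau_v\circ\Psi_p=\Psi_{p+v}
\]
on $D_R\times\mathbb S^1$. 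So in the $(x,y,s)$ coordinates, $\tau_v$ is the identity map from $U_p$ to $U_{p+v}$. It preserves $r$, so it maps $\mathcal M_p$ onto $\mathcal M_{p+v}$ and the retained part to itself. Since $g_p$ and $g_{p+v}$ are given by the same formula \eqref{eq:def of gp}, we have $g_{p+v}\circ\tau_v=g_p$ on $\partial\mathcal M_p$.

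We define $\tau_v$ on $\widetilde{\mathcal M}_p$ as the identity map $(\xi,\eta,z)\mapsto(\xi,\eta,z)$ into $\widetilde{\mathcal M}_{p+v}$. By the compatibility above, this agrees with the old action across the gluing tori. Since the smooth structure near each gluing torus is defined via the coordinates $(\xi,\eta,z)$ of \eqref{eq:coordinate-exchange} extended into the retained part, the glued map is smooth. Equivalently, $\tau_v=\Phi\circ\tau_v\circ\Phi^{-1}$ on $N\setminus K$, and it extends smoothly by the identity in local coordinates across $K$. The group law $\tau_v\tau_w=\tau_{v+w}$ holds on the dense set $N\setminus K$, so it holds everywhere by continuity.

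Freeness follows from two facts. Off $K$, the action is conjugate via $\Phi$ to the free action of Proposition~\ref{prop:top-translations}. On $K$, a nonzero $v$ moves $K_p$ to $K_{p+v}\neq K_p$, because $G$ acts freely and transitively on $P$.

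A free smooth action of a finite group on a compact manifold is properly discontinuous. Hence $p_N$ is a smooth regular covering with deck group $G$, and it has four sheets because $|G|=4$. Connectedness of $N$ is already established in Proposition~\ref{prop:complement-map}.

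For the final claim, $N/G$ is the union of two pieces:
\[
\Bigl(M\setminus\bigcup_p\operatorname{int}\mathcal M_p\Bigr)\Big/G \qquad\text{and}\qquad \Bigl(\bigsqcup_p\widetilde{\mathcal M}_p\Bigr)\Big/G\cong\widetilde{\mathcal M}_0.
\]
These are glued by the single map induced by $g_0$. By Proposition~\ref{prop:quotient-original}, the first piece is $M/G$ with a tube about the orbit over $0\in\Sigma$ removed. That tube is the image of $\mathcal M_0$ in the coordinates of Proposition~\ref{prop:solid-torus-coordinates}, and these coordinates descend to the quotient. The gluing is therefore exactly the filling of Proposition~\ref{prop:top-slope}.

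The main obstacle I expect is the lift bookkeeping. We need $\widetilde{p+v}$ to be consistent with $\widetilde p+v$, so that the translated chart is literally the chart at $p+v$ and no extra rotation or deck shift by $(2\mathbb Z)^2$ appears. This should be resolvable because the charts are built from lifts with the common matrix $Q$, and because the charts only depend on $\widetilde p$ modulo $(2\mathbb Z)^2$.
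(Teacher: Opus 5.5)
Your proposal is correct and follows essentially the same route as the paper. Both arguments rest on the same steps:
\begin{itemize}
\item The relation $\kappa_{p+a}=\kappa_p+a$ gives $\tau_a\circ\Psi_p=\Psi_{p+a}$. As you note, this is automatic, since $\kappa_p$ depends only on $\widetilde p$ modulo $(2\mathbb Z)^2$.
\item The action is extended to the new tubes by the identity in $(\xi,\eta,z)$-coordinates, using that the gluing maps do not depend on $p$.
\item Freeness, the covering property and the identification of the deck group follow from a free action of a finite group on the connected manifold $N$.
\item The quotient is a single filling via $\bar g=g_0\circ(r|_{\partial\mathcal M_0})^{-1}$.
\end{itemize}
One cosmetic correction: your relation $g_{p+v}\circ\tau_v=g_p$ should read $g_{p+v}\circ\tau_v=\tau_v\circ g_p$, because the two sides land in different new tubes.
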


\begin{proof}
For $a\in G$, the base charts in \eqref{eq:def of kp} satisfy
	\[
	\kappa_{p+a}(u,v)=\kappa_p(u,v)+a.
	\]
	The formula for $\Psi_p$ therefore gives
	\[
	\tau_a\circ\Psi_p=\Psi_{p+a},
	\qquad
	\tau_a(U_p)=U_{p+a},
	\qquad
	\tau_a(\mathcal M_p)=\mathcal M_{p+a}.
	\]
	Thus the retained part of $M$ is $G$-invariant, and translation preserves the tube coordinates $(x,y,s)$.

Extend the action to the new tubes by
\[
\tau_a:
\widetilde{\mathcal M}_p\longrightarrow \widetilde{\mathcal M}_{p+a},
\qquad
(\xi,\eta,z)_p\longmapsto(\xi,\eta,z)_{p+a}.
\]
Since the coordinate exchange \eqref{eq:coordinate-exchange} is independent of $p$, the gluing maps satisfy
\[
g_{p+a}\circ\tau_a=\tau_a\circ g_p \qquad\text{on }\partial\mathcal M_p.
\]
Therefore the action extends smoothly to $N$ and the group laws hold by construction.

The action is free on the retained part by Proposition~\ref{prop:top-translations}.
On the new tubes, every nonidentity element sends $\widetilde{\mathcal M}_p$ to a distinct tube $\widetilde{\mathcal M}_{p+a}$, so it fixes no point. $N$ is connected by Proposition~\ref{prop:complement-map}.
Fix $x_0\in N$. Since $G$ is finite and acts freely, there is an open neighborhood $U$ of $x_0$ such that
	\[
	\tau_a(U)\cap\tau_b(U)=\varnothing
	\qquad(a\ne b).
	\]
	Then
	\[
	p_N^{-1}(p_N(U))
	=
	\bigsqcup_{a\in G}\tau_a(U).
	\]
The quotient charts give $W'$ a smooth structure for which each restriction
	\[	p_N|_{\tau_a(U)}:\tau_a(U)\longrightarrow p_N(U)
	\]
	is a diffeomorphism. Hence $p_N$ is a smooth four-sheeted covering. This argument applies at every point of $N$, including the new core circles, so the covering is unbranched.	
	Moreover,
	\[
	p_N\circ\tau_a=p_N,
	\qquad
	p_N^{-1}(p_N(x_0))
	=
	\{\tau_a(x_0):a\in G\}.
	\]
	Thus the $\tau_a$ are deck transformations and act
	transitively on each fiber, proving regularity.
	If $d\in\operatorname{Deck}(p_N)$, there is a unique $a\in G$
	such that $d(x_0)=\tau_a(x_0)$.
	Since $N$ is connected, uniqueness of lifts gives $d=\tau_a$.
	Therefore,
	\[
	\operatorname{Deck}(p_N)
	=
	\{\tau_a:a\in G\}\cong G.
	\]
	
	Finally, each restriction
	\[
	r|_{\mathcal M_p}:\mathcal M_p\longrightarrow r(\mathcal M_0)
	\]
	is a diffeomorphism preserving the tube coordinates. Hence
	\[
	\left(
	M\setminus\bigcup_{p\in P}\operatorname{int}\mathcal M_p
	\right)/G
	\cong	(M/G)\setminus\operatorname{int}r(\mathcal M_0),
	\]
	and
	\[
	\left(
	\bigsqcup_{p\in P}\widetilde{\mathcal M}_p
	\right)/G
	\cong
	\widetilde{\mathcal M}_0.
	\]
	The compatibility of the gluing maps with the action gives
	\[
	W'
	\cong	\bigl((M/G)\setminus\operatorname{int}r(\mathcal M_0)\bigr)
	\cup_{\bar g}\widetilde{\mathcal M}_0,
	\]
	where
	\[
	\bar g
	=
	g_0\circ
	\bigl(r|_{\partial\mathcal M_0}\bigr)^{-1}
	:
	\partial r(\mathcal M_0)
	\longrightarrow
	\partial\widetilde{\mathcal M}_0.
	\]
	
	By Proposition~\ref{prop:quotient-original}, $r(\mathcal M_0)$ is a tubular neighborhood of the suspension
	orbit over $0\in\Sigma$.
	In the inherited boundary coordinates, $\bar g$ has the same coordinate expression as $g_0$.
	Thus Proposition~\ref{prop:top-slope} gives
	\[
	(\bar g^{-1})_*[m_{\mathrm{new}}]
	=
	[\ell_{\mathrm{old}}],
	\qquad
	(\bar g^{-1})_*[\ell_{\mathrm{new}}]
	=
	-[m_{\mathrm{old}}].
	\]
	Consequently, the quotient is obtained by a single filling
	along the same primitive degeneracy slope, with no meridional twist.
\end{proof}

\subsection{Identification of the quotient}

We identify $W'=N/G$ with the Hantzsche--Wendt manifold using two classical results on cyclic branched covers of the figure-eight knot.

\begin{proposition}\label{prop:top-figure-eight}
	\cite[Example~2.1, proof of Corollary~1.3, and Convention~2.4]{Zung} The complement of an open tubular neighborhood of the suspension orbit over $0\in\Sigma$ in
	$M_1(A;\Sigma)$ is homeomorphic to the figure-eight knot exterior.
	
	For every integer $n\geq1$, filling the complement of an open tubular neighborhood of the suspension
	orbit over $0$ in $M_1(A^n;\Sigma)$ along the degeneracy slope gives the $n$-fold cyclic cover of $S^3$ branched over the figure-eight knot.
\end{proposition}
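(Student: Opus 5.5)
The plan is to reduce both assertions to two facts. The first is the classical description of the figure-eight knot exterior as a punctured-torus bundle. The second is the identification of the knot meridian with the degeneracy slope $[\ell_{\mathrm{old}}]$; I expect that identification to be the main obstacle.

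\textbf{Step 1: the exterior.} Blow up the fixed point $0\in\Sigma$ of $A$ to its circle of directions. The complement of an open tube around the orbit over $0$ in $M_1(A;\Sigma)$ is then the mapping torus of the compact punctured torus $\Sigma_0$ (a torus with an open disk removed). Its monodromy is the map induced by $A$, and its boundary torus is coordinatized by $(\theta,s)$ as in Section~\ref{subsec:the coordinate exchange}. The figure-eight knot is fibered of genus one, and its monodromy acts on $H_1$ of the fiber by a matrix conjugate in $SL(2,\Z)$ to $\begin{pmatrix}2&1\\1&1\end{pmatrix}$. Mapping tori of a punctured torus are determined up to homeomorphism by the conjugacy class of the induced action on $H_1$, because the mapping class group of the punctured torus is $SL(2,\Z)$. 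Hence the exterior is homeomorphic to the figure-eight knot exterior $E$. Under this homeomorphism the boundary of a fiber, namely the curve $m_{\mathrm{old}}$ ($\theta$ increasing at fixed $s$), is the Seifert longitude of the knot. The knot meridian $\mu$ meets the fiber once, so $\mu=\pm([\ell_{\mathrm{old}}]+k[m_{\mathrm{old}}])$ for some integer $k$.

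\textbf{Step 2: the main obstacle, $k=0$.} Homology cannot detect $k$. Indeed $A-I$ has determinant $-1$, so every filling along $\ell_{\mathrm{old}}+k\,m_{\mathrm{old}}$ is a homology sphere. I would instead use that the figure-eight knot is hyperbolic. By Gordon--Luecke exactly one slope (the meridian) yields $S^3$, so it suffices to show that filling $E$ along $\ell_{\mathrm{old}}$ gives $S^3$. First I would try Zung's convention (cited in Proposition~\ref{prop:top-slope}). There the degeneracy slope of the suspended fixed point with period $m_i=1$ and prong rotation $\omega_i=0$ is the slope $(0;1)$. That convention is set up so that filling the degeneracy slope reproduces the branched-cover description, and for $n=1$ this is $S^3$. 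As a fallback I would compute $\pi_1$ directly. Present the fundamental group of the punctured-torus bundle as $\langle a,b,t\mid tat^{-1}=A_*(a),\ tbt^{-1}=A_*(b)\rangle$. Choose the basepoint on the boundary so that $t$ is represented by the suspension of a fixed eigenray, since $A$ fixes the eigenrays. Killing $t$ then gives $\langle a,b\mid a=A_*(a),\,b=A_*(b)\rangle$, which for $A_*=(a\mapsto a^2b,\ b\mapsto ab)$ collapses to the trivial group. By geometrization the filled manifold is $S^3$. The delicate point is to check that the $\pi_1$ element $t$ determined by the eigenray suspension is exactly the free generator with no extra boundary twist.

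\textbf{Step 3: the $n$-fold covers.} The $n$-fold cyclic cover of $E$ is the cover associated with the kernel of $\pi_1(E)\to\Z\to\Z/n$, where the map to $\Z$ is linking number with the knot. This cover is obtained by unrolling the fibration $n$ times, so it is the mapping torus of $A^n$ on $\Sigma_0$, i.e.\ the exterior of the orbit over $0$ in $M_1(A^n;\Sigma)$. In tube coordinates the covering map is $(x,y,s)\mapsto(x,y,ns \bmod 1)$, exactly as computed for $C_3$ in Proposition~\ref{prop:top-slope}. Hence $m_{\mathrm{old}}$ lifts homeomorphically, and the preimage of $\ell_{\mathrm{old}}=\mu$ is the single curve $\ell_{\mathrm{old}}^{(n)}$, covering $\mu$ with degree $n$. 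The $n$-fold cyclic branched cover is by definition the filling of the cyclic cover along the connected preimage of the meridian. That preimage is the degeneracy slope $\ell_{\mathrm{old}}^{(n)}$ for $A^n$, which gives the second assertion.
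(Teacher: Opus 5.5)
Your Steps 1 and 3 are sound, and they go further than the paper, which gives no argument here and simply cites Zung. The gap is Step 2, which carries the entire content of the proposition: the knot meridian must be $\ell_{\mathrm{old}}$ itself, not $\ell_{\mathrm{old}}+k\,m_{\mathrm{old}}$ with $k\neq0$.

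Your first route (Zung's convention) is just the paper's citation restated. Your fallback asserts without justification that, with basepoint on an eigenray, the monodromy acts by $a\mapsto a^2b$, $b\mapsto ab$. That identification is exactly what determines $k$. Let $\phi$ be the automorphism at a boundary basepoint and $c$ the boundary loop there. Then every $\iota_{c^k}\circ\phi$ also induces $A$ on $H_1$. Its stable letter is $c^k t$, which is the peripheral class $\ell_{\mathrm{old}}+k\,m_{\mathrm{old}}$ up to orientation conventions. By Gordon--Luecke and the Poincar\'e conjecture, killing it gives the trivial group for only one value of $k$. So writing down some lift of $A$ and killing its stable letter presupposes the answer. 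You flag this yourself as the delicate point, but you leave it unresolved.

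The gap can be closed by a direct computation.
\begin{enumerate}
\item Take $x_0=\delta(1,\sigma)$ with $\sigma=(\sqrt5-1)/2$ and $\delta>0$ small. This point lies on the unstable eigenray, since $A(1,\sigma)=(2+\sigma)(1,\sigma)$. Let $a,b$ be the horizontal and vertical loops through $x_0$.
\item Pull $A\circ a$ back along the radial eigenray arc from $x_0$ to $Ax_0$. Then $A_*(a)$ is represented in the cover $\mathbb{R}^2\setminus\mathbb{Z}^2$ by the straight segment from $x_0$ to $x_0+(2,1)$.
\item This segment and the edge path of $aba$ cobound two triangles, meeting at a point, that contain no lattice point. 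So the two paths are homotopic rel endpoints. By contrast, the triangle between the segment and the edge path of $a^2b$ contains $(2,1)$.
\item Hence $A_*(a)=aba$, and the same check gives $A_*(b)=ab$. This automorphism fixes the boundary loop $a^{-1}b^{-1}ab$ exactly, as a boundary-based automorphism must.
\item Killing $t$ gives $b=ab$, so $a=1$ and then $b=1$.
\end{enumerate}
In the basis $(a,a^{-1}ba)$ this automorphism becomes $a\mapsto a^2b$, $b\mapsto ab$. So your group computation survives, but only once this verification is supplied. With it, your argument via Perelman and Gordon--Luecke becomes a complete, self-contained alternative to the citation.
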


\begin{proposition}
	\label{prop:top-HW-identification}
	\cite[p.~602, paragraph following Theorem~1.2]{Zimmermann}
	The three-fold cyclic cover of $S^3$ branched over the figure-eight knot is homeomorphic to the orientable flat
	Hantzsche--Wendt manifold $W$.
\end{proposition}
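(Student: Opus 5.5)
The plan is to identify the three-fold cyclic branched cover, call it $X_3$, through its geometry and its first homology, rather than through an explicit cut-and-paste model. I would take the following four steps in order.

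\textbf{Step 1 (the cover is flat).} Let $\mathcal O_3$ be the orbifold with underlying space $S^3$, singular locus the figure-eight knot $K_8$, and cone angle $2\pi/3$. The complete hyperbolic structure on $S^3\setminus K_8$ deforms through hyperbolic cone structures with cone angle $\alpha\in(0,2\pi/3)$, and it degenerates to a Euclidean structure exactly at $\alpha=2\pi/3$. This is classical (Thurston's notes, and the explicit computations of Hilden--Lozano--Montesinos). Concretely, I would exhibit the Euclidean structure by gluing the two regular ideal tetrahedra of the figure-eight complement, replaced by Euclidean tetrahedra with dihedral angles adjusted so that the angle sum around the edges coming from $K_8$ is $2\pi/3$. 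Since $X_3\to\mathcal O_3$ is an orbifold covering of degree $3$ whose local groups are exactly $\Z/3\Z$ along the branch locus, $X_3$ is a closed flat orientable $3$-manifold.

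\textbf{Step 2 (first homology).} I would compute $H_1(X_3;\Z)$ from the Alexander polynomial $\Delta(t)=t^2-3t+1$ of $K_8$. Its order is
\[
\bigl|\Delta(\omega)\,\Delta(\omega^2)\bigr|,\qquad \omega=e^{2\pi i/3}.
\]
Since $\Delta(\omega)=\omega^2-3\omega+1=-4\omega$, the order is $16$. For the group structure I would use the relation matrix of the branched cover, namely $V^{3}-(V-V^{T})^{3}$ built from a genus-one Seifert matrix $V$ of $K_8$, or equivalently the Fox-calculus relation matrix. This should show that the group is $\Z/4\Z\oplus\Z/4\Z$ rather than $\Z/16\Z$ or $\Z/2\Z\oplus\Z/8\Z$.

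\textbf{Step 3 (classification).} Among the six closed orientable flat $3$-manifolds, the only one with finite first homology is the Hantzsche--Wendt manifold $W$, and $H_1(W;\Z)\cong\Z/4\Z\oplus\Z/4\Z$. So $X_3$ is homeomorphic to $W$; Bieberbach's theorem even gives an affine diffeomorphism.

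\textbf{Step 4 (cross-check).} As an independent check, Wirtinger's presentation of $\pi_1(S^3\setminus K_8)$ together with Reidemeister--Schreier yields the Fibonacci group $F(2,6)=\pi_1(X_3)$. This group is known to be torsion-free, of order-$4$ holonomy $(\Z/2\Z)^2$, and isomorphic to the Hantzsche--Wendt group.

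The main obstacle is Step 1, namely producing an honest Euclidean cone structure of angle $2\pi/3$ on $(S^3,K_8)$ rather than quoting the hyperbolic Dehn filling degeneration. If the explicit tetrahedral gluing becomes unwieldy, I would instead rely on Step 4 and prove directly that $F(2,6)$ contains $\Z^3$ as a normal subgroup with quotient $(\Z/2\Z)^2$ acting by diagonal sign changes. Asphericity of $X_3$ together with geometrization for aspherical manifolds with virtually abelian $\pi_1$ then again gives $X_3\cong W$.
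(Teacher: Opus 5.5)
\textbf{Comparison with the paper.} The paper does not prove this statement. It imports it from Zimmermann, and only later, in Proposition~\ref{prop:top-Wprime}, upgrades the homeomorphism to a diffeomorphism using Moise and uniqueness of smoothings. Your route is a correct and standard derivation that takes a genuinely different path: a Euclidean structure on the cone-angle-$2\pi/3$ orbifold, then $b_1(X_3)=0$, then the classification of closed orientable flat $3$-manifolds. It also gives an affine diffeomorphism $X_3\cong W$ rather than only a homeomorphism. Step~3 only uses that $H_1(X_3;\Z)$ is finite, and this already follows from $\Delta(\omega)=-4\omega\neq0$. So the group-structure part of Step~2, and Step~4, are cross-checks (e.g.\ against $\Pi_{\mathrm{ab}}\cong(\Z/4\Z)^2$ in Proposition~\ref{prop:top-HW-group}), not needed steps.

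\textbf{Two local points to fix.}

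First, the tetrahedral construction in Step~1 does not work as described. In the two-tetrahedron decomposition of $S^3\setminus K_8$, the knot sits at the ideal vertices, so no edges ``come from $K_8$''. The Euclidean structure arises as the rescaled limit of the collapsing hyperbolic cone structures as the angle tends to $2\pi/3$, or from an explicit crystallographic description. It does not come from adjusting the angles of the same tetrahedra. So Step~1 remains a citation (Thurston, Hilden--Lozano--Montesinos), at the same level as the paper's citation of Zimmermann.

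Second, $V^3-(V-V^T)^3$ is not a presentation matrix for $H_1(X_3;\Z)$. For the Seifert matrix $V=\bigl(\begin{smallmatrix}1&1\\0&-1\end{smallmatrix}\bigr)$ of $K_8$ it is unimodular, so it would present the trivial group. The correct matrix is $\Gamma^3-(\Gamma-I)^3$ with $\Gamma=(V^T-V)^{-1}V^T$, which here equals $4I$. Equivalently, since $K_8$ is fibered with monodromy $A$, one has $H_1(X_3;\Z)\cong\operatorname{coker}(A^3-I)=\operatorname{coker}(B-I)$. The matrix $B-I=\bigl(\begin{smallmatrix}12&8\\8&4\end{smallmatrix}\bigr)$ has Smith form $\operatorname{diag}(4,4)$, which gives $(\Z/4\Z)^2$.
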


\begin{proposition}
	\label{prop:top-Wprime}
	The manifolds \(W'\) and \(W\) are diffeomorphic.
\end{proposition}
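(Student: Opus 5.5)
The plan is to chain the earlier identifications and then upgrade the resulting homeomorphism to a diffeomorphism.

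\textbf{Step 1: a filling description of $W'$.} By Proposition~\ref{prop:top-regular}, $W'$ is obtained from $M/G$ by filling along the degeneracy slope $[\ell_{\mathrm{old}}]$, with no meridional twist, at the suspension orbit over $0\in\Sigma$. Proposition~\ref{prop:quotient-original} supplies the diffeomorphism $\Psi=F_\Sigma^{-1}\circ\mathcal Q:M/G\to M_1(B;\Sigma)$. This map sends the image of the four orbits $\Gamma_p$ to the orbit $\{[0,s]_1\}$. Near that orbit it is the identity in the common suspension coordinates, because $F_\Sigma$ is the identity where $\rho=1$ and $\mathcal Q$ is induced by $\pi\times\mathrm{id}$. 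Hence $\Psi$ carries the tube $r(\mathcal M_0)$, its coordinates $(x,y,s)$ from \eqref{eq:solid-torus-chart}, and the curves $m_{\mathrm{old}},\ell_{\mathrm{old}}$ to the corresponding objects in $M_1(B;\Sigma)$. Consequently $W'$ is diffeomorphic to the filling of the exterior of the orbit over $0$ in $M_1(B;\Sigma)=M_1(A^3;\Sigma)$ along the degeneracy slope.

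\textbf{Step 2: matching the slope conventions.} The main obstacle is to check that the slope we fill along is exactly the degeneracy slope in the convention of \cite{Zung} used in Proposition~\ref{prop:top-figure-eight}. The first concern is that this slope is measured relative to a framing. Proposition~\ref{prop:top-slope} addresses this: the framing \eqref{eq:solid-torus-chart} for $B$ is the pullback under $C_3$ of the framing for $A$. The second concern is the prong rotation. Proposition~\ref{prop:top-slope} also gives $m_i=1$ and $\omega_i=0$, so the filling is the zero slope $(0;1)$, namely the suspension of an eigenray. Since this slope is defined intrinsically by the eigenrays, and the eigenray curve $s\mapsto(\theta_0,s)$ is identified in the proof of Proposition~\ref{prop:top-slope}, I expect no meridional correction to arise. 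Any ambiguity in the sign or orientation conventions is harmless, because a slope is an unoriented class.

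\textbf{Step 3: identification with $W$.} Apply Proposition~\ref{prop:top-figure-eight} with $n=3$: the filling from Step 1 is homeomorphic to the three-fold cyclic cover of $S^3$ branched over the figure-eight knot. Proposition~\ref{prop:top-HW-identification} then shows this cover is homeomorphic to $W$. Therefore $W'$ is homeomorphic to $W$.

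\textbf{Step 4: from homeomorphism to diffeomorphism.} $W'$ is a smooth closed $3$-manifold, being a quotient of $N$ by a free smooth action. $W$ is a smooth flat manifold. By Moise's theorem, homeomorphic smooth $3$-manifolds are diffeomorphic: every topological $3$-manifold has a unique smooth structure up to diffeomorphism. Hence $W'\cong W$ as smooth manifolds.
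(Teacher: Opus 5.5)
Your proposal is correct and follows essentially the same route as the paper: you transfer the filling from $M/G$ to $M_1(B;\Sigma)$ via $F_\Sigma^{-1}\circ\mathcal Q$, which is the identity near the orbit over $0$; you use Proposition~\ref{prop:top-slope} for the slope and framing; you apply Propositions~\ref{prop:top-figure-eight} (with $n=3$) and~\ref{prop:top-HW-identification}; and you upgrade the resulting homeomorphism to a diffeomorphism. The only nuance is that Moise's theorem by itself gives triangulability and the Hauptvermutung, i.e.\ uniqueness of the PL structure, and passing to uniqueness of smooth structures in dimension three needs the additional PL/smooth comparison, which is why the paper cites both Moise and Thurston's uniqueness-of-smoothings theorem.
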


\begin{proof}
	By Propositions~\ref{prop:quotient-original} and~\ref{prop:top-roof}, $M/G$ is diffeomorphic to
	$M_1(B;\Sigma)$. Since $\rho=1$ near $0$, this diffeomorphism preserves the chosen tube coordinates near the suspension orbit over $0$.
	
	By Propositions~\ref{prop:top-regular}
	and~\ref{prop:top-slope}, $W'$ is obtained by filling the exterior of this orbit along the primitive
	degeneracy slope represented by $\ell_{\mathrm{old}}$, with the framing induced from $A$ under the
	three-sheeted suspension covering.
	
	Since $B=A^3$, Proposition~\ref{prop:top-figure-eight} with $n=3$ identifies $W'$ with the three-fold cyclic cover of $S^3$ branched over the figure-eight knot. Proposition~\ref{prop:top-HW-identification} then shows that $W'$ is homeomorphic to $W$.
	The three-dimensional Hauptvermutung
	\cite[Chapter~36, Theorem~2]{Moise}
	and the uniqueness of smoothings in dimension three
	\cite[Theorem~3.10.9]{Thurston}
	imply that $W'$ and $W$ are diffeomorphic.
\end{proof}

\subsection{The translation cover of the Hantzsche--Wendt manifold}

We identify the covering constructed in
Proposition~\ref{prop:top-regular} with the translation cover of the Hantzsche--Wendt manifold.

\begin{proposition}
	\label{prop:top-HW-group}
	The Hantzsche--Wendt manifold admits a flat realization
	\[
	W=\R^3/\Pi,
	\]
	where $\Pi$ is a Bieberbach group with presentation
	\[
	\Pi=
	\left\langle a,b,c\ \middle|\
	ab^2a^{-1}b^2=1,\;
	ba^2b^{-1}a^2=1,\;
	abc=1
	\right\rangle.
	\]
	Its translation subgroup $L$ is a full-rank lattice in $\R^3$ and satisfies
	\[
	L\triangleleft\Pi,
	\qquad
	\Pi/L\cong(\Z/2\Z)^2.
	\]
	Moreover,
	\[
	\Pi_{\mathrm{ab}}
	:=\Pi/[\Pi,\Pi]
	\cong(\Z/4\Z)^2.
	\]
\end{proposition}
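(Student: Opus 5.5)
We realize $W$ explicitly as a quotient of $\R^3$ by affine isometries, verify the stated relations and the lattice structure by direct computation, and then argue separately that the given presentation defines exactly this group. Write elements of $\mathrm{Isom}(\R^3)$ as pairs $(D,w)$ acting by $x\mapsto Dx+w$. Set
\[
a=\bigl(\operatorname{diag}(1,-1,-1),(\tfrac12,\tfrac12,0)\bigr),\qquad
b=\bigl(\operatorname{diag}(-1,1,-1),(0,\tfrac12,\tfrac12)\bigr),
\]
and $c:=(ab)^{-1}$. Let $\Pi$ be the subgroup of $\mathrm{Isom}(\R^3)$ generated by $a$ and $b$. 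The relation $abc=1$ holds by definition of $c$.

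\emph{Step 1: relations and the lattice.} A direct computation gives
\[
a^2=(I,(1,0,0)),\qquad b^2=(I,(0,1,0)),\qquad (ab)^2=(I,(0,0,\pm1)).
\]
Conjugation by a nontrivial linear part $D$ sends a translation by $w$ to translation by $Dw$. Hence $ab^2a^{-1}=b^{-2}$ and $ba^2b^{-1}=a^{-2}$, which are the two stated relations.

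Let $L=\Z^3$ be generated by these three translations. The linear-part homomorphism $\Pi\to O(3)$ has image the Klein four-group $\{I,\operatorname{diag}(1,-1,-1),\operatorname{diag}(-1,1,-1),\operatorname{diag}(-1,-1,1)\}\cong(\Z/2\Z)^2$. I will check that its kernel is exactly $L$: every element of $\Pi$ is one of $a,b,ab$, or $1$ times an element of $L$, and the translation parts stay in $\tfrac12\Z^3$ with the forced parity pattern. Thus $L$ is the translation subgroup, $L\triangleleft\Pi$ (as a kernel), and $\Pi/L\cong(\Z/2\Z)^2$. Since $\Pi$ is an extension of a finite group by a cocompact lattice, it is discrete and cocompact.

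\emph{Step 2: freeness, so that $\R^3/\Pi$ is the Hantzsche--Wendt manifold.} An element $(D,w)$ with $D$ a rotation by $\pi$ about a coordinate axis acts freely exactly when the component of $w$ along that axis is not an integer. For the cosets $aL$, $bL$, $abL$ this component lies in $\tfrac12+\Z$: the $x$-component for $a$, the $y$-component for $b$, and the $z$-component for $ab$. Hence $\Pi$ is a torsion-free Bieberbach group with holonomy $(\Z/2\Z)^2$ and diagonal linear parts. By the classical classification (Hantzsche--Wendt; Wolf), the only orientable flat $3$-manifold with this holonomy is $W$, so $W=\R^3/\Pi$.

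\emph{Step 3: the presentation.} This is the main obstacle, because the relations can be checked in $\Pi$ directly, but injectivity from the abstract group needs an argument. Let $\widehat\Pi$ be the group with the stated presentation; it surjects onto $\Pi$. In $\widehat\Pi$, set $x=a^2$, $y=b^2$, $z=(ab)^2$.
\begin{enumerate}
\item Using the relations, I will show that conjugation by $a$ or $b$ sends each of $x,y,z$ to itself or its inverse, so that $H=\langle x,y,z\rangle$ is normal.
\item I will then show that $x,y,z$ commute pairwise, so $H$ is abelian.
\item Since $\widehat\Pi/H$ is generated by two commuting involutions, $[\widehat\Pi:H]\le4$.
\end{enumerate}
The surjection $\widehat\Pi\to\Pi$ maps $H$ onto $L\cong\Z^3$. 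An abelian group generated by three elements that surjects onto $\Z^3$ is free abelian of rank three, so $H\to L$ is an isomorphism. Together with the index count this forces $\widehat\Pi\cong\Pi$. If the commutation computations in (ii) become unwieldy, my fallback is to cite the standard presentation of the Hantzsche--Wendt group (Zimmermann) and match generators.

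\emph{Step 4: abelianization.} Abelianizing the relations gives $4b=0$, $4a=0$, and $c=-a-b$. Therefore $\Pi_{\mathrm{ab}}\cong(\Z/4\Z)^2$.
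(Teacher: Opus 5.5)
Your proof is correct, and it takes a genuinely different, more self-contained route than the paper. The paper cites Charlap for three things: the flat realization, the presentation, and the structure of the translation subgroup. It only computes the abelianization, and that computation is identical to your Step~4.

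You instead proceed as follows:
\begin{enumerate}
\item You build $\Pi$ from explicit screw motions.
\item You verify by hand the relations and the coset decomposition $\Pi=L\sqcup aL\sqcup bL\sqcup abL$ with $L=\Z^3$, and check freeness.
\item You prove that the abstract presentation is faithful. This uses the normal abelian subgroup $H=\langle a^2,b^2,(ab)^2\rangle$, the bound $[\widehat\Pi:H]\le 4$, and the Hopfian property of $\Z^3$.
\end{enumerate}
The only external input left is the classification of orientable flat $3$-manifolds, which identifies $\R^3/\Pi$ with $W$.

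The computations you postponed in Step~3 do go through. Write $x=a^2$, $y=b^2$, $z=(ab)^2$. The relations give $ab^2=b^{-2}a$ and $b^{-1}a^2b=a^{-2}$. From these, $aza^{-1}z=a^2bab^2ab=a^2b^{-1}a^2b=1$. Symmetrically, $bzb^{-1}z=bab^{-1}\cdots$ reduces to $ba^{-1}b^2ab=1$.

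Your item (ii) then follows at once from (i), so no fallback citation is needed. Conjugation by $a$ fixes $x$ and inverts $y$ and $z$, so conjugation by $x=a^2$ fixes $y$ and $z$. Likewise conjugation by $y=b^2$ fixes $z$.

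One small imprecision is in Step~2. A single element $(D,w)$, with $D$ the half-turn about the $e_1$-axis, is fixed-point-free on $\R^3$ if and only if $w_1\neq 0$, not if and only if $w_1\notin\Z$. Non-integrality is the condition that makes every element of the whole coset free. That is how you actually use it, so the conclusion stands.

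Your route gives an explicit model in which $L$, $\Pi/L\cong(\Z/2\Z)^2$, and $\R^3/L\cong\T^3$ are all visible directly. These are exactly the data used later in Proposition~\ref{prop:top-torus}, and the model costs only some extra length. The paper's route gains brevity by leaving everything except the abelianization to the literature.
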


\begin{proof}
	The flat realization, the presentation of $\Pi$,
	and the structure of its translation subgroup are classical; see
	\cite[Chapter~I, Example~2.4 and Exercise~2.2;
	Chapter~V, Section~6, Example~6.1]{Charlap}.
	
	It remains to compute the abelianization.
	Writing the group operation additively in $\Pi_{\mathrm{ab}}$, the defining relations give
	\[
	4[b]=0,\qquad
	4[a]=0,\qquad
	[a]+[b]+[c]=0.
	\]
	Eliminating $[c]$ yields
	\[
	\Pi_{\mathrm{ab}}
	=
	\langle [a],[b]\mid
	4[a]=0,\ 4[b]=0\rangle
	\cong(\Z/4\Z)^2.
	\]
\end{proof}

\begin{proposition}
	\label{prop:top-torus}
	Let $h:W'\to W$ be a diffeomorphism as in Proposition~\ref{prop:top-Wprime}.
	Then the covering
	\[
	h\circ p_N:N\longrightarrow W
	\]
	is smoothly equivalent to the natural projection
	\[
	p_L:\R^3/L\longrightarrow\R^3/\Pi=W.
	\]
	In particular, $N$ is diffeomorphic to $\mathbb T^3$.
\end{proposition}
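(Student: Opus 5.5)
The plan is to use the classification of connected coverings through subgroups of the fundamental group, and to show that $\Pi$ has only one normal subgroup with quotient $(\Z/2\Z)^2$. Identify $\pi_1(W,w_0)$ with the deck group $\Pi$ of the universal cover $\R^3\to W$, after choosing a basepoint $\tilde w_0\in\R^3$ over $w_0$. By Proposition~\ref{prop:top-regular}, $N$ is connected and $p_N$ is a regular four-sheeted covering with deck group $G\cong(\Z/2\Z)^2$. Composing with the diffeomorphism $h$ of Proposition~\ref{prop:top-Wprime}, the map $h\circ p_N:N\to W$ is again a connected regular covering with deck group $G$. Choose $n_0\in N$ over $w_0$. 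The image $H:=(h\circ p_N)_*\pi_1(N,n_0)$ is then a normal subgroup of $\Pi$ with $\Pi/H\cong G\cong(\Z/2\Z)^2$. By Proposition~\ref{prop:top-HW-group}, the translation lattice $L$ is also normal in $\Pi$ with $\Pi/L\cong(\Z/2\Z)^2$, and it is the subgroup corresponding to the covering $p_L$.

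The key algebraic step is uniqueness. Any surjection $\Pi\to(\Z/2\Z)^2$ factors through
\[
\Pi_{\mathrm{ab}}/2\Pi_{\mathrm{ab}}\cong(\Z/4\Z)^2/2(\Z/4\Z)^2\cong(\Z/2\Z)^2,
\]
using the abelianization computed in Proposition~\ref{prop:top-HW-group}. The induced map is a surjection between groups of order four, hence an isomorphism. Its kernel is therefore always the preimage of $2\Pi_{\mathrm{ab}}$ under $\Pi\to\Pi_{\mathrm{ab}}$, which does not depend on the surjection. So there is exactly one normal subgroup of $\Pi$ with quotient $(\Z/2\Z)^2$, namely the kernel of $\Pi\to\Pi_{\mathrm{ab}}\otimes\Z/2$. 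It follows that $H=L$. Since both subgroups are normal, no conjugacy or basepoint issue arises.

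By the lifting criterion, the equality of subgroups gives a homeomorphism $\Theta:N\to\R^3/L$ with $p_L\circ\Theta=h\circ p_N$. Both $h\circ p_N$ and $p_L$ are smooth local diffeomorphisms. Hence, in local sheets, $\Theta$ is written as $(p_L|_{\text{sheet}})^{-1}\circ h\circ p_N$, which is smooth, and the same argument applies to $\Theta^{-1}$. Thus $\Theta$ is a diffeomorphism realizing the smooth equivalence of coverings.

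Finally, $L$ is a full-rank lattice in $\R^3$, so a linear isomorphism sending $L$ to $\Z^3$ gives $\R^3/L\cong\T^3$. Therefore $N\cong\T^3$.

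The only delicate point is making sure the covering hypotheses are applied exactly as stated: the total space $N$ must be connected, and the deck group must be exactly $G\cong(\Z/2\Z)^2$, so that $\Pi/H$ really is $(\Z/2\Z)^2$ rather than some other group of order four. Proposition~\ref{prop:top-regular} provides both facts.
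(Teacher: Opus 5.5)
Your proposal is correct and follows the paper's proof essentially step for step. You show that the subgroup $H$ of the regular cover is normal with $\Pi/H\cong(\Z/2\Z)^2$, use $\Pi_{\mathrm{ab}}/2\Pi_{\mathrm{ab}}\cong(\Z/2\Z)^2$ to force $H=L$, then upgrade the covering equivalence to a diffeomorphism and identify $\R^3/L$ with $\T^3$ by a linear map.
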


\begin{proof}
	Set $p=h\circ p_N$.
	By Proposition~\ref{prop:top-regular}, $p$ is a
	connected smooth regular four-sheeted covering with deck group $(\Z/2\Z)^2$.
	Let $H\leq\Pi$ be the subgroup corresponding to $p$. By
	\cite[Proposition~1.39(a),(b)]{Hatcher},
	\[
	H\triangleleft\Pi,
	\qquad
	\Pi/H\cong(\Z/2\Z)^2.
	\]
	
	Since both $\Pi/H$ and $\Pi/L$ are abelian groups of exponent two, their quotient maps factor through
	\[
	q_2:\Pi\longrightarrow
	\Pi_{\mathrm{ab}}/2\Pi_{\mathrm{ab}}.
	\]
	By Proposition~\ref{prop:top-HW-group},
	\[
	\Pi_{\mathrm{ab}}/2\Pi_{\mathrm{ab}}
	\cong(\Z/2\Z)^2.
	\]
	The induced quotient maps are surjective homomorphisms between groups of order four, and hence are isomorphisms. Therefore,
	\[
	H=\ker q_2=L.
	\]
	
	By the classification of covering spaces \cite[Theorem~1.38]{Hatcher}, there is a covering equivalence
	\[
	F:N\longrightarrow\R^3/L
	\]
	satisfying
	\[
	p_L\circ F=p.
	\]
	Since both covering maps are smooth local diffeomorphisms, $F$ is a local diffeomorphism. Being a covering equivalence, $F$ is therefore
	a diffeomorphism.
	
	Finally, since $L$ is a full-rank latticein $\R^3$, there is a linear isomorphism
	$J:\R^3\to\R^3$ with $J(L)=\Z^3$.
	It induces a diffeomorphism
	\[
	\R^3/L\longrightarrow
	\R^3/\Z^3=\mathbb T^3.
	\]
	Thus $N$ is diffeomorphic to $\mathbb T^3$.
\end{proof}

\section{Quantitative transfer of mixing}\label{sec:transfer}

We now formulate the general transfer result used to pass exponential mixing through the singular conjugacy constructed above.

\begin{theorem}\label{thm:transfer}
Let $\varphi^t$ and $\Psi^t$ be complete smooth flows on $M$ and $N$, preserving probabilities $\mu$ and $\nu$. Let $S\subset M$ be a nonempty closed embedded smooth submanifold without boundary, and let $K\subset N$
be closed. Assume that:

\begin{enumerate}
\item
The sets $S$ and $K$ are invariant and satisfy $\mu(S)=\nu(K)=0.$

\item
There is a $C^1$ diffeomorphism
$\Phi:M\setminus S\longrightarrow N\setminus K
$ such that $\Phi\circ\varphi^t=\Psi^t\circ\Phi,$ and $\nu=\Phi_*\mu.$

\item There exist $\kappa>0$, $C_1>0$, and $r_*>0$ such that
\begin{equation}\label{eq:intro-small-mass}
\mu\{x:d_M(x,S)<r\}
\le C_1 r^\kappa,
\quad 0<r<r_*.
\end{equation}

\item There exist $p\ge1$ and $C_2>0$ such that
\begin{equation}\label{eq:intro-derivative}
\|D\Phi_x\|
\le
C_2d_M(x,S)^{-p},
\quad x\in M\setminus S.
\end{equation}
\end{enumerate}

Fix $0<\alpha<1$. Suppose that there exist
$C_\alpha,\gamma_\alpha>0$ such that
\begin{equation}\label{ineq:seed estimate}
|\Corr_{\mu,\varphi}(h,l;t)|
\le
C_\alpha e^{-\gamma_\alpha t}
\|h\|_{C^\alpha(M)}
\|l\|_{C^\alpha(M)}
\end{equation}
for all $h,l\in C^\alpha(M)$ and $t\ge0$.
Then there exists $C'_\alpha>0$ such that
\begin{equation}\label{eq:intro-transfer-estimate}
|\Corr_{\nu,\Psi}(f,g;t)|
\le
C'_\alpha
e^{-\kappa\gamma_\alpha t/(\kappa+2p\alpha)}
\|f\|_{C^\alpha(N)}
\|g\|_{C^\alpha(N)}
\end{equation}
for all $f,g\in C^\alpha(N)$ and $t\ge0$.
\end{theorem}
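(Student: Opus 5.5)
We pull observables back through $\Phi$ and cut them off near $S$. Given $f,g\in C^\alpha(N)$, set $\tilde f:=f\circ\Phi$ and $\tilde g:=g\circ\Phi$ on $M\setminus S$. By (i) and (ii), with $\nu=\Phi_*\mu$ and $\Phi\circ\varphi^t=\Psi^t\circ\Phi$, we get
\[
\Corr_{\nu,\Psi}(f,g;t)=\Corr_{\mu,\varphi}(\tilde f,\tilde g;t),
\]
since $\mu(S)=0$. The functions $\tilde f,\tilde g$ are bounded by $\|f\|_{L^\infty},\|g\|_{L^\infty}$ but need not be H\"older on $M$. Fix a scale $0<\varepsilon<r_*$ to be optimized. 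We replace $\tilde f$ by $h_\varepsilon:=\tilde f\,\psi_\varepsilon$, where $\psi_\varepsilon=\beta_0(d_M(\cdot,S)/\varepsilon)$ is a cutoff. Here $\beta_0$ is Lipschitz, equal to $0$ on $[0,1]$ and to $1$ on $[2,\infty)$, so $\psi_\varepsilon$ vanishes on the $\varepsilon$-neighborhood $S_\varepsilon$ of $S$, equals $1$ off $S_{2\varepsilon}$, and has Lipschitz constant $\lesssim\varepsilon^{-1}$. We define $l_\varepsilon$ from $\tilde g$ in the same way. The cutoff may be taken Lipschitz rather than smooth, because the H\"older norm only needs Lipschitz bounds.

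\textbf{Step 1: H\"older bound.} Take $x,y\notin S_\varepsilon$ and suppose first that $d_M(x,y)<\varepsilon/2$. Then the minimizing geodesic from $x$ to $y$ stays at distance $\ge\varepsilon/2$ from $S$, so by (iv) $\|D\Phi\|\le C_2 2^p\varepsilon^{-p}$ along it. Hence
\[
d_N(\Phi x,\Phi y)\lesssim \varepsilon^{-p}d_M(x,y)\quad\text{and}\quad |\tilde f(x)-\tilde f(y)|\le [f]_{C^\alpha}\,C\varepsilon^{-p\alpha}d_M(x,y)^\alpha .
\]
If $d_M(x,y)\ge\varepsilon/2$, we use instead the trivial bound $2\|f\|_\infty\le C\varepsilon^{-\alpha}d_M(x,y)^\alpha$. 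Combining these with the Lipschitz bound on $\psi_\varepsilon$ (interpolated as $\min(2,\varepsilon^{-1}d)\le C\varepsilon^{-\alpha}d^\alpha$), and handling points in $S_\varepsilon$ by the fact that $h_\varepsilon$ vanishes there, gives
\[
\|h_\varepsilon\|_{C^\alpha(M)}\le C\varepsilon^{-p\alpha}\|f\|_{C^\alpha(N)}.
\]
This uses $p\ge1$, so that $\varepsilon^{-\alpha}\le\varepsilon^{-p\alpha}$ for small $\varepsilon$; for $\varepsilon$ bounded below the constants are absorbed. The same bound holds for $l_\varepsilon$. This step is the main obstacle. The delicate points are the geodesic argument and the treatment of pairs $x,y$ straddling the cutoff region, where one point lies near $S$ and the other does not. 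If geodesics do cause trouble, the fallback is to use (iv) only within normal charts of $M$ on the region $d\ge\varepsilon/2$, whose size is uniform by compactness.

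\textbf{Step 2: error terms.} We have $|\tilde f-h_\varepsilon|\le\|f\|_\infty\mathbf 1_{S_{2\varepsilon}}$, and the same for $g$. Since $\mu$ is $\varphi^t$-invariant, (iii) bounds the errors from the product term and from both mean terms:
\[
|\Corr(\tilde f,\tilde g;t)-\Corr(h_\varepsilon,l_\varepsilon;t)|\le C\,\varepsilon^\kappa\|f\|_\infty\|g\|_\infty .
\]
Applying \eqref{ineq:seed estimate} to the truncated observables then gives
\[
|\Corr_{\nu,\Psi}(f,g;t)|\le C\bigl(\varepsilon^\kappa+e^{-\gamma_\alpha t}\varepsilon^{-2p\alpha}\bigr)\|f\|_{C^\alpha}\|g\|_{C^\alpha}.
\]

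\textbf{Step 3: optimization.} Choose $\varepsilon=c\,e^{-\gamma_\alpha t/(\kappa+2p\alpha)}$ with $c<r_*$, so that the two terms balance. Both terms are then of order $e^{-\kappa\gamma_\alpha t/(\kappa+2p\alpha)}$. For small $t$ the trivial bound $|\Corr|\le 2\|f\|_\infty\|g\|_\infty$ suffices and is absorbed into $C'_\alpha$. This yields \eqref{eq:intro-transfer-estimate}.
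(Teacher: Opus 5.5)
Your proposal is correct and follows essentially the same route as the paper: pull $f,g$ back through $\Phi$, cut off at scale $\varepsilon$ near $S$, bound the H\"older norm by $C\varepsilon^{-p\alpha}$ via the same minimizing-geodesic argument, bound the truncation error by $C\varepsilon^\kappa$ using invariance of $\mu$ and (iii), and balance with $\varepsilon\sim e^{-\gamma_\alpha t/(\kappa+2p\alpha)}$. The differences are minor. Your Lipschitz cutoff $\beta_0(d_M(\cdot,S)/\varepsilon)$ replaces the paper's smooth tubular-neighborhood cutoff, so the smoothness of $S$ is never actually used. You should take $c\le\min\{1,r_*/2\}$ rather than $c<r_*$, so that (iii) applies at radius $2\varepsilon$ and $\varepsilon^{-\alpha}\le\varepsilon^{-p\alpha}$.
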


The rest of this section is devoted to the proof of Theorem~\ref{thm:transfer}. We write $d_S(x)=d_M(x,S)$ and fix $0<\alpha<1$. Constants may depend on the geometric data and the constants in the hypotheses, but not on the observables, time or cutoff scale.

\begin{lemma}\label{lem:transfer-cutoff}
There exists $\delta_*>0$ such that for every $0<\delta<\delta_*$,
there exists $\chi_\delta\in C^\infty(M;[0,1])$ satisfying
\begin{equation*}
 \chi_\delta=0\text{ on }\{d_S\le\delta\},\qquad
 \chi_\delta=1\text{ on }\{d_S\ge2\delta\},
\end{equation*}
and
\begin{equation*}
 \Lip(\chi_\delta)\le C\delta^{-1},\qquad
 [\chi_\delta]_{C^\alpha}\le C_\alpha\delta^{-\alpha}.
\end{equation*}
\end{lemma}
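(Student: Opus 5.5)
The natural construction is to compose a fixed smooth step function with a regularized distance to $S$. Since $S$ is a compact embedded submanifold, it has a tubular neighborhood: there is $\delta_0>0$ such that $d_S$ is smooth on $\{0<d_S<4\delta_0\}$ with $|\nabla d_S|=1$ there. This is the standard regularity of the distance function to a compact smooth submanifold inside its normal injectivity radius. Set $\delta_*:=\delta_0$.

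Fix once and for all $\beta_0\in C^\infty(\R;[0,1])$ with $\beta_0(t)=0$ for $t\le 1$ and $\beta_0(t)=1$ for $t\ge 2$. For instance, one can build it from the function $a$ of Section~\ref{sec:seed}, as was done for $\beta$. Define
\[
\chi_\delta(x):=\beta_0\bigl(d_S(x)/\delta\bigr).
\]
This function is identically $0$ on $\{d_S\le\delta\}$ and identically $1$ on $\{d_S\ge 2\delta\}$. Its only nonconstant region is $\{\delta<d_S<2\delta\}\subset\{0<d_S<4\delta_0\}$, where $d_S$ is smooth. Hence $\chi_\delta\in C^\infty(M)$ with values in $[0,1]$. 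If one prefers to avoid smoothness of $d_S$, one can replace $d_S$ by a smooth function $\tilde d$ with $\tilde d$ comparable to $d_S$ and $|\nabla\tilde d|\le C$. The thresholds then change by constant factors, which are absorbed by rescaling $\beta_0$.

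For the Lipschitz bound, note that $d_S$ is $1$-Lipschitz for $d_M$. Therefore
\[
|\chi_\delta(x)-\chi_\delta(y)|\le \|\beta_0'\|_\infty\,\delta^{-1}|d_S(x)-d_S(y)|\le C\delta^{-1}d_M(x,y).
\]
This needs no smoothness at all. For the Hölder seminorm, interpolate between the Lipschitz bound and the trivial bound $|\chi_\delta(x)-\chi_\delta(y)|\le 1$. If $d_M(x,y)\le\delta$, then
\[
|\chi_\delta(x)-\chi_\delta(y)|\le C\delta^{-1}d_M(x,y)\le C\delta^{-1}d_M(x,y)^{\alpha}\delta^{1-\alpha}=C\delta^{-\alpha}d_M(x,y)^\alpha .
\]
If $d_M(x,y)>\delta$, then
\[
|\chi_\delta(x)-\chi_\delta(y)|\le 1\le \delta^{-\alpha}d_M(x,y)^\alpha .
\]
Together these give $[\chi_\delta]_{C^\alpha}\le C_\alpha\delta^{-\alpha}$ with $C_\alpha=\max(C,1)$.

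The only non-routine point is the smoothness of $\chi_\delta$, that is, the smoothness of $d_S$ on a punctured tubular neighborhood. This is standard via the normal exponential map of $S$, which is a diffeomorphism from a small normal disk bundle onto a neighborhood of $S$. In that neighborhood $d_S$ equals the fiber norm, which is smooth away from the zero section. Everything else is elementary.
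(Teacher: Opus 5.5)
Your proof is correct and follows the paper's argument: you compose a fixed step function with $d_S/\delta$, get smoothness from the normal exponential map on a tubular neighborhood, and obtain the H\"older bound by interpolating the $C\delta^{-1}$ Lipschitz bound with the trivial bound $1$. The differences are minor. You get the Lipschitz bound directly from the fact that $d_S$ is $1$-Lipschitz, whereas the paper uses the chain rule with $\|Dd_S\|\le 1$ on the transition band. Also, your optional aside about replacing $d_S$ by a merely comparable $\tilde d$ only works if the comparability ratio is below $2$, but your main argument does not use it.
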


\begin{proof}
By the tubular neighborhood theorem, there exists \(r_0>0\) such that the normal exponential map
\[
\exp^\perp:
\{(y,v)\in NS:\ |v|<r_0\}
\longrightarrow
U_{r_0}(S):=\{x\in M:d_S(x)<r_0\}
\]
is a diffeomorphism. In these normal coordinates,
\[
d_S(\exp_y^\perp v)=|v|.
\]
Hence \(d_S\) is smooth on \(U_{r_0}(S)\setminus S\), while \(d_S^2\) is smooth on all of \(U_{r_0}(S)\).

Choose
\[
\zeta\in C^\infty([0,\infty);[0,1])
\]
such that $\zeta\equiv0 $ on $[0,1]$ and $
\zeta\equiv1$ on $[2,\infty)$. 
Fix $0<\delta_*<\frac{r_0}{2}.$
For \(0<\delta<\delta_*\), define
\[
\chi_\delta(x)=
\begin{cases}
\zeta\!\left(d_S(x)/\delta\right),
& x\in U_{r_0}(S),\\[1mm]
1,
& x\notin U_{r_0}(S).
\end{cases}
\]
Since \(\zeta\equiv0\) on \([0,1]\), one has $\chi_\delta\equiv0$ on $\{d_S\le\delta\}.$ Thus \(\chi_\delta\) is constant on a neighborhood of \(S\), so the possible nonsmoothness of \(d_S\) at \(S\) does not affect the smoothness of \(\chi_\delta\). Moreover, since \(2\delta<r_0\) and
\(\zeta\equiv1\) on \([2,\infty)\),
$\chi_\delta\equiv1$ on $\{2\delta\le d_S<r_0\}.$ Hence the two definitions above agree on a neighborhood of
\(\partial U_{r_0}(S)\), and therefore
$\chi_\delta\in C^\infty(M).$

Now we estimate the Lipschitz norm of $\chi_\delta$. On the transition region
\[
A_\delta
:=
\{x\in M:\delta<d_S(x)<2\delta\},
\]
the distance function \(d_S\) is smooth. Hence, for \(x\in A_\delta\), the chain rule gives
\begin{equation}\label{eq:chain rule for dchidelta}
D\chi_\delta(x)
=
\frac{1}{\delta}
\zeta'\!\left(\frac{d_S(x)}{\delta}\right)
Dd_S(x).
\end{equation}
We now use the fact that the distance to a closed set is \(1\)-Lipschitz:
\[
|d_S(x)-d_S(y)|
\le
d_M(x,y),
\quad x,y\in M.
\]
Since \(d_S\) is smooth at every point of \(A_\delta\), this implies
\[
\|Dd_S(x)\|\le 1,
\quad x\in A_\delta.
\]
Therefore, by \eqref{eq:chain rule for dchidelta},
\[
\|D\chi_\delta(x)\|
\le
\frac{\|\zeta'\|_{L^\infty}}{\delta}
\|Dd_S(x)\| 
\le \frac{\|\zeta'\|_{L^\infty}}{\delta},
\quad x\in A_\delta.
\]
Since \(\zeta'\) vanishes outside \((1,2)\), \(D\chi_\delta\) vanishes outside \(A_\delta\) as well. Therefore,
\[
\|D\chi_\delta\|_{L^\infty(M)}
\le
\|\zeta'\|_{L^\infty}\delta^{-1}.
\]
In particular,
\[
\operatorname{Lip}(\chi_\delta)
\le
\|D\chi_\delta\|_{L^\infty(M)}
\le
C\delta^{-1},
\]
where \(C=\|\zeta'\|_{L^\infty}\) is independent of \(\delta\).

Finally, since \(0\le\chi_\delta\le1\), we have
\[
|\chi_\delta(x)-\chi_\delta(y)|
\le
\min\left\{
1,\,
C\delta^{-1}d_M(x,y)
\right\}
\le C_\alpha\delta^{-\alpha}d_M(x,y)^\alpha,
\]
where the second inequality follows from the fact
\[
\min\{1,a\}\le a^\alpha,
\quad a\ge0, \quad 0<\alpha<1.
\]
Thus, we have 
\[
[\chi_\delta]_{C^\alpha}
\le
C_\alpha\delta^{-\alpha},
\]
which completes the proof.
\end{proof}

For $f\in C^\alpha(N)$, define $h=f\circ\Phi$ off $S$, and set $h=0$ on $S$. This gives a bounded measurable function on $M$. Let $h_\delta=\chi_\delta h$.

\begin{lemma}\label{lem:transfer-norms}
Let \(\kappa>0\) and \(p\ge1\) be the exponents in \eqref{eq:intro-small-mass} and \eqref{eq:intro-derivative}. There exist \(C>0\) and \(0<\delta_0<\min\{\delta_*,r_*/2,1\}\) such that for every \(0<\delta\le\delta_0\),
\begin{enumerate}
    \item [(1)] \( \|h_\delta\|_\infty\le\|f\|_\infty, \) \label{eq:transfer-sup}

    \item[(2)] \( \|h-h_\delta\|_{L^1(\mu)}\le C\delta^\kappa\|f\|_\infty,\) \label{eq:transfer-L1}

    \item[(3)] \(\|h_\delta\|_{C^\alpha(M)}\le
       C\delta^{-p\alpha}\|f\|_{C^\alpha(N)}. \) 
       \label{eq:transfer-holder}
\end{enumerate}
\end{lemma}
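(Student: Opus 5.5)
Items (1) and (2) follow directly from the cutoff of Lemma~\ref{lem:transfer-cutoff}. Since $0\le\chi_\delta\le1$ and $|h|\le\|f\|_\infty$ everywhere (with $h=0$ on $S$), item (1) is immediate. For (2), $h-h_\delta=(1-\chi_\delta)h$ vanishes on $\{d_S\ge2\delta\}$ and is bounded by $\|f\|_\infty$. Hence
\[
\|h-h_\delta\|_{L^1(\mu)}\le\|f\|_\infty\,\mu\{d_S<2\delta\}\le C_1 2^\kappa\delta^\kappa\|f\|_\infty ,
\]
by \eqref{eq:intro-small-mass}, provided $2\delta<r_*$. This is why we take $\delta_0<r_*/2$.

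The substance is (3). I would first prove a local Lipschitz bound for $\Phi$ on the region $\{d_S>\delta\}$. The plan is as follows.

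\begin{enumerate}
\item Fix $x,y$ with $d_S(x),d_S(y)\ge\delta$ and $d_M(x,y)<\delta/2$.
\item Take a minimizing geodesic $\sigma$ from $x$ to $y$. Choosing $\delta_0$ small relative to the injectivity radius makes such a geodesic exist with length $d_M(x,y)$.
\item Every point of $\sigma$ satisfies $d_S\ge\delta-d_M(x,y)>\delta/2$, since $d_S$ is $1$-Lipschitz. So $\sigma\subset M\setminus S$.
\item Integrating \eqref{eq:intro-derivative} along $\sigma$ gives
\[
d_N(\Phi x,\Phi y)\le \operatorname{length}(\Phi\circ\sigma)\le C_2 2^p\delta^{-p}d_M(x,y).
\]
\end{enumerate}

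With this bound I would estimate $[h_\delta]_{C^\alpha}$ by cases on $x,y$.

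\begin{enumerate}
\item \emph{Both points in $\{d_S\le\delta\}$.} Then $h_\delta(x)=h_\delta(y)=0$.
\item \emph{Both points have $d_S\ge\delta$ and $d_M(x,y)<\delta/2$.} Write
\[
h_\delta(x)-h_\delta(y)=\chi_\delta(x)\bigl(h(x)-h(y)\bigr)+h(y)\bigl(\chi_\delta(x)-\chi_\delta(y)\bigr).
\]
The first term is bounded by
\[
[f]_{C^\alpha}\,d_N(\Phi x,\Phi y)^\alpha\le C\delta^{-p\alpha}[f]_{C^\alpha}\,d_M(x,y)^\alpha .
\]
The second is bounded by $\|f\|_\infty C_\alpha\delta^{-\alpha}d_M(x,y)^\alpha$, using Lemma~\ref{lem:transfer-cutoff}. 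Since $p\ge1$ and $\delta<1$, we have $\delta^{-\alpha}\le\delta^{-p\alpha}$.
\item \emph{Both points have $d_S\ge\delta$ and $d_M(x,y)\ge\delta/2$.} Here $|h_\delta(x)-h_\delta(y)|\le2\|f\|_\infty\le 2^{1+\alpha}\|f\|_\infty\delta^{-\alpha}d_M(x,y)^\alpha$.
\item \emph{Mixed case: $d_S(x)\le\delta<d_S(y)$.} Then $h_\delta(x)=0$, so this reduces to bounding $|h_\delta(y)|$. Along the geodesic from $x$ to $y$ there is a point $x'$ with $d_S(x')=\delta$, by continuity of $d_S$. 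Then $h_\delta(x')=0$ and $d_M(x',y)\le d_M(x,y)$. The cases with both points in $\{d_S\ge\delta\}$ applied to $(x',y)$ give the bound. Alternatively, one can directly compare $|\chi_\delta(y)|\le C\delta^{-1}(d_S(y)-\delta)\le C\delta^{-1}d_M(x,y)$ and interpolate with $\min\{1,\cdot\}$, as in Lemma~\ref{lem:transfer-cutoff}.
\end{enumerate}

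Combining the cases with (1) yields $\|h_\delta\|_{C^\alpha}\le C\delta^{-p\alpha}\|f\|_{C^\alpha(N)}$.

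The main obstacle is the Lipschitz estimate for $\Phi$ away from $S$. Hypothesis \eqref{eq:intro-derivative} is only pointwise, and we must ensure that the connecting curves stay at distance comparable to $\delta$ from $S$. I would handle this exactly by restricting to $d_M(x,y)<\delta/2$, where short geodesics automatically avoid $\{d_S\le\delta/2\}$. Pairs at larger distance are absorbed by the sup bound at cost $\delta^{-\alpha}$. I also need to take $\delta_0$ below the injectivity radius of $M$, so that minimizing geodesics exist, and below $1$, so that $\delta^{-\alpha}\le\delta^{-p\alpha}$.
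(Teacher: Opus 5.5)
Your proposal is correct and follows essentially the same route as the paper: the sup bound when $d_M(x,y)\ge\delta/2$, and for $d_M(x,y)<\delta/2$ a minimizing geodesic that stays in $\{d_S>\delta/2\}$, along which the derivative bound is integrated and then combined with the product splitting of $h_\delta(x)-h_\delta(y)$. The differences are only cosmetic: the paper absorbs your mixed case into the short-distance case, because $\chi_\delta(x)>0$ already forces the whole geodesic, including $y$, into $\{d_S>\delta/2\}$; and your injectivity-radius restriction on $\delta_0$ is unnecessary, since Hopf--Rinow gives minimizing geodesics on the compact manifold $M$.
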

\begin{proof}
The first bound follows from $0\le\chi_\delta\le1$. Since \(2\delta\le2\delta_0<r_*\), assumption \eqref{eq:intro-small-mass} gives
\[
\begin{aligned}
\|h-h_\delta\|_{L^1(\mu)}
&=
\int_M (1-\chi_\delta(x))|h(x)|\,d\mu(x)\\
&\le
\|f\|_\infty\,\mu\{d_S<2\delta\} \le
C_1(2\delta)^\kappa\|f\|_\infty \le
C\delta^\kappa\|f\|_\infty.
\end{aligned}
\]
To prove the H\"older estimate, fix $x,y\in M$ and let $d=d_M(x,y)>0$. If $d\ge\delta/2$, 
\begin{equation}\label{ineq:h delta large}
 |h_\delta(x)-h_\delta(y)|
 \le2\|f\|_\infty
 \le2^{1+\alpha}\delta^{-\alpha}\|f\|_\infty d^\alpha
 \le2^{1+\alpha}\delta^{-p\alpha}\|f\|_\infty d^\alpha.
\end{equation}
The final inequality uses $p\ge1$ and $\delta\le1$.

Now suppose $d<\delta/2$. If both cutoff values vanish, the difference is zero. We may assume $\chi_\delta(x)>0$. Then $d_S(x)>\delta$. Let $\sigma$ be a minimizing geodesic from $x$ to $y$ satisfying $\sigma(0)=x,
\sigma(d)=y, |\dot\sigma(t)|=1$ and
$\operatorname{length}(\sigma)=d.$ It exists because $M$ is compact. Every point $w$ of this geodesic satisfies
\begin{equation}\label{ineq:w distance}
 d_S(w)\ge d_S(x)-d_M(x,w)>\delta-d>\delta/2.
\end{equation}
The geodesic therefore lies in the domain of $\Phi$. Thus, we have
\begin{align*}
 d_N(\Phi x,\Phi y)
 & \le\operatorname{length}(\Phi\circ\sigma) =\int_0^d
\left|
\frac{d}{dt}\Phi(\sigma(t))
\right|_N\,dt \\
&\leq \int_0^d \|D\Phi_{\sigma(t)}\|
\,|\dot\sigma(t)|_M\,dt
 \le C\delta^{-p}d,
\end{align*}
where the last inequality holds due to the assumption \eqref{eq:intro-derivative} and \eqref{ineq:w distance}. We then deduce that
\begin{align*}
 |h_\delta(x)-h_\delta(y)|
 &\le\chi_\delta(x)|h(x)-h(y)|
      +|\chi_\delta(x)-\chi_\delta(y)|\,|h(y)|\\
 &\le [f]_{C^\alpha} d_N^\alpha(\Phi x,\Phi y) + [\chi_\delta]_{C^\alpha} d_M^\alpha(x,y) \|h\|_{\infty} \\
 &\le C[f]_{C^\alpha}\delta^{-p\alpha}d^\alpha
      +C\|f\|_\infty\delta^{-\alpha}d^\alpha
\le C\delta^{-p\alpha}\|f\|_{C^\alpha(N)}d^\alpha.
\end{align*}
Together with \eqref{ineq:h delta large}, the third bound holds.
\end{proof}

We now transfer the correlation function for the flow \(\Psi^t\) on \(N\) to the original flow \(\varphi^t\) on \(M\).

\begin{lemma}\label{lem:transfer-error}
Let $f,g\in C^\alpha(N)$ satisfy $\nu(f)=\nu(g)=0$. Define $h=f\circ\Phi$, $l=g\circ\Phi$ off $S$, and extend both by zero on $S$. Let
$h_\delta=\chi_\delta h$ and $l_\delta=\chi_\delta l$. Then, for every \(0<\delta\le\delta_0\) and every $t\ge0$,
\begin{equation}\label{eq:transfer-correlation-identity}
 \Corr_{\nu,\Psi}(f,g;t)=\Corr_{\mu,\varphi}(h,l;t),
\end{equation}
and there exists $C>0$, independent of $t$, such that 
\begin{equation}\label{ineq:correlation gap}
 |\Corr_{\mu,\varphi}(h,l;t)
   -\Corr_{\mu,\varphi}(h_\delta,l_\delta;t)|
 \le C\delta^\kappa\|f\|_\infty\|g\|_\infty,
\end{equation}
where $\kappa>0$ is the exponent in \eqref{eq:intro-small-mass}.
\end{lemma}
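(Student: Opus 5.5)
The identity \eqref{eq:transfer-correlation-identity} follows from change of variables via hypothesis (ii). Since $\nu=\Phi_*\mu$, $\mu(S)=0$ and $\nu(K)=0$, we have $\int_M h\,d\mu=\int_{N\setminus K} f\,d\nu=\nu(f)=0$, and likewise $\mu(l)=0$. Because $S$ is invariant, so is $M\setminus S$, and for $x\in M\setminus S$ the conjugacy gives
\[
l(\varphi^t x)=g(\Phi(\varphi^t x))=g(\Psi^t\Phi x).
\]
Hence
\[
\int_M h\cdot l\circ\varphi^t\,d\mu=\int_{M\setminus S}(f\circ\Phi)\,(g\circ\Psi^t\circ\Phi)\,d\mu=\int_{N\setminus K} f\cdot g\circ\Psi^t\,d\nu .
\]
The right-hand side equals $\int_N f\cdot g\circ\Psi^t\,d\nu$ because $\nu(K)=0$. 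Both means vanish, so the two correlations coincide. The functions $h,l$ are bounded and measurable, so $\Corr_{\mu,\varphi}(h,l;t)$ is defined even though $h,l$ need not be H\"older.

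For \eqref{ineq:correlation gap}, I would split the difference of correlations into a product term and a mean term. The product term is
\[
\int h\, l\circ\varphi^t-\int h_\delta\, l_\delta\circ\varphi^t=\int (h-h_\delta)\, l\circ\varphi^t+\int h_\delta\,(l-l_\delta)\circ\varphi^t .
\]
The first integral is bounded by $\|l\|_\infty\|h-h_\delta\|_{L^1(\mu)}\le C\delta^\kappa\|f\|_\infty\|g\|_\infty$, using Lemma~\ref{lem:transfer-norms}(2) and $\|l\|_\infty\le\|g\|_\infty$. The second is bounded by $\|h_\delta\|_\infty\|(l-l_\delta)\circ\varphi^t\|_{L^1(\mu)}$. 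Here the $\varphi^t$-invariance of $\mu$ gives $\|(l-l_\delta)\circ\varphi^t\|_{L^1(\mu)}=\|l-l_\delta\|_{L^1(\mu)}\le C\delta^\kappa\|g\|_\infty$, and $\|h_\delta\|_\infty\le\|f\|_\infty$ by Lemma~\ref{lem:transfer-norms}(1).

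For the mean term, $\mu(h)=\mu(l)=0$ gives
\[
\mu(h_\delta)\mu(l_\delta)-\mu(h)\mu(l)=\mu(h_\delta-h)\,\mu(l_\delta).
\]
Its absolute value is at most $C\delta^\kappa\|f\|_\infty\cdot\|g\|_\infty$, again by Lemma~\ref{lem:transfer-norms}(1),(2). Adding the two estimates gives \eqref{ineq:correlation gap} with a constant independent of $t$ and $\delta$.

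There is no serious obstacle. The only points needing care are that the constants do not depend on $t$, which comes entirely from invariance of $\mu$, and that the measure-zero sets $S$ and $K$, where $h,l$ were set to $0$, do not affect any integral.
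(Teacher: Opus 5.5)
Your proposal is correct and follows the paper's argument. The identity is proved by the same change of variables. For the gap, your telescoping $(h-h_\delta)\,l\circ\varphi^t+h_\delta\,(l-l_\delta)\circ\varphi^t$ is the paper's pointwise bound $1-\chi_\delta(x)\chi_\delta(\varphi^t x)\le(1-\chi_\delta(x))+(1-\chi_\delta(\varphi^t x))$ in different form, with $\varphi^t$-invariance of $\mu$ handling the shifted term. Your mean-term bound of order $\delta^\kappa$, rather than the paper's $\delta^{2\kappa}$, is already enough.
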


\begin{proof}
Since \(\nu=\Phi_*\mu\) and \(\mu(S)=\nu(K)=0\), it follows that $\mu(h)=\mu(l)=0$. Hence,
\[
\begin{aligned}
\operatorname{Corr}_{\mu,\varphi}(h,l;t)
&=
\int_M h(x)\,l(\varphi^t x)\,d\mu(x)=
\int_{M\setminus S}
f(\Phi x)\,
g\!\left(\Phi(\varphi^t x)\right)\,d\mu(x)\\
&=
\int_{M\setminus S}
f(\Phi x)\,
g\!\left(\Psi^t(\Phi x)\right)\,d\mu(x)=
\int_{N\setminus K}
f(y)\,g(\Psi^t y)\,d\nu(y)\\
&=
\int_N
f(y)\,g(\Psi^t y)\,d\nu(y)=
\operatorname{Corr}_{\nu,\Psi}(f,g;t).
\end{aligned}
\]
For \eqref{ineq:correlation gap}, we have
\begin{align}\label{eq:correlation split}
&\operatorname{Corr}_{\mu,\varphi}(h,l;t)
-
\operatorname{Corr}_{\mu,\varphi}(h_\delta,l_\delta;t) \notag
\\
&\quad=
\int_M
h(x)l(\varphi^t x)\,d\mu(x)
-
\int_M
h_\delta(x)l_\delta(\varphi^t x)\,d\mu(x)
+
\left(\int_M h_\delta\,d\mu\right)
\left(\int_M l_\delta\,d\mu\right) \notag
\\
&\quad=
\int_M
h(x)l(\varphi^t x)
\Bigl[
1-\chi_\delta(x)\chi_\delta(\varphi^t x)
\Bigr]
\,d\mu(x)
+
\left(\int_M h_\delta\,d\mu\right)
\left(\int_M l_\delta\,d\mu\right). 
\end{align}
Since \(0\le \chi_\delta\le1\),
\[
1-\chi_\delta(x)\chi_\delta(\varphi^t x)
\le
\bigl(1-\chi_\delta(x)\bigr)
+
\bigl(1-\chi_\delta(\varphi^t x)\bigr).
\]
Hence
\[
\begin{aligned}
&
\left|
\int_M
h(x)l(\varphi^t x)
\Bigl[
1-\chi_\delta(x)\chi_\delta(\varphi^t x)
\Bigr]
\,d\mu(x)
\right|
\\
&\qquad\le
\|h\|_\infty\|l\|_\infty
\left[
\int_M (1-\chi_\delta(x))\,d\mu(x)
+
\int_M (1-\chi_\delta(\varphi^t x))\,d\mu(x)
\right].
\end{aligned}
\]
By the \(\varphi^t\)-invariance of \(\mu\),
\[
\int_M (1-\chi_\delta(\varphi^t x))\,d\mu(x)
=
\int_M (1-\chi_\delta(x))\,d\mu(x)
\leq \mu(U_{2\delta}(S))
\le
C\delta^\kappa.
\]
Here \(1-\chi_\delta\) vanishes outside \(U_{2\delta}(S)\), and the last inequality follows from \eqref{eq:intro-small-mass}. Consequently,
\begin{equation}\label{ineq:correlation sum 1}
\left|
\int_M
h(x)l(\varphi^t x)
\Bigl[
1-\chi_\delta(x)\chi_\delta(\varphi^t x)
\Bigr]
\,d\mu(x)
\right|
\le
C\delta^\kappa
\|f\|_\infty\|g\|_\infty.
\end{equation}
For the second term in \eqref{eq:correlation split}, 
\[
\left|\int_M h_\delta\,d\mu\right|
=
\left|
\int_M(\chi_\delta-1)h\,d\mu
\right|\le
\|h\|_\infty
\mu(U_{2\delta}(S))
\le
C\delta^\kappa\|h\|_\infty.
\]
The same estimate holds for $\mu(l_\delta)$. Thus,
\begin{equation}\label{ineq:h delta mean}
\left|
\left(\int_M h_\delta\,d\mu\right)
\left(\int_M l_\delta\,d\mu\right)
\right|
\le
C\delta^{2\kappa}
\|h\|_\infty\|l\|_\infty
\le
C\delta^\kappa
\|f\|_\infty\|g\|_\infty.
\end{equation}
Combining \eqref{eq:correlation split}, \eqref{ineq:correlation sum 1} and \eqref{ineq:h delta mean}, \eqref{ineq:correlation gap} holds. The constant is independent of $t$.
\end{proof}

Now we can prove the general transfer theorem as stated in Theorem~\ref{thm:transfer}.
\begin{proof}[Proof of Theorem~\ref{thm:transfer}]
We may assume $\nu(f)=\nu(g)=0$. Let $h_\delta,l_\delta$ be defined as in Lemma~\ref{lem:transfer-error}. 
By \eqref{eq:transfer-correlation-identity} and \eqref{ineq:correlation gap} we have
\begin{align}\label{ineq:correlation final estimate}
|\operatorname{Corr}_{\nu,\Psi}(f,g;t)|
&\le
\left|
\operatorname{Corr}_{\mu,\varphi}(h,l;t)
-
\operatorname{Corr}_{\mu,\varphi}(h_\delta,l_\delta;t)
\right|
+
|\operatorname{Corr}_{\mu,\varphi}(h_\delta,l_\delta;t)| \notag \\
& \leq C\delta^\kappa
\|f\|_\infty\|g\|_\infty + Ce^{-\gamma_\alpha t}
\|h_\delta\|_{C^\alpha(M)}
\|l_\delta\|_{C^\alpha(M)} \notag \\
& \leq C\left(
\delta^\kappa
+
e^{-\gamma_\alpha t}\delta^{-2p\alpha}
\right)
\|f\|_{C^\alpha(N)}
\|g\|_{C^\alpha(N)}.
\end{align}
The second inequality holds due to \eqref{ineq:seed estimate} and the third follows from Lemma~\ref{lem:transfer-norms}. Choose a fixed admissible $\delta_0\le1$ and put
\begin{equation*}
 \delta=\delta_0
   \exp\left(-\frac{\gamma_\alpha}{\kappa+2p\alpha}t\right).
\end{equation*}
The two terms in
\eqref{ineq:correlation final estimate} become:
\begin{align*}
 \delta^\kappa
 &=\delta_0^\kappa
   e^{-\kappa\gamma_\alpha t/(\kappa+2p\alpha)},\\
 e^{-\gamma_\alpha t}\delta^{-2p\alpha}
 &=\delta_0^{-2p\alpha}
   e^{-\kappa\gamma_\alpha t/(\kappa+2p\alpha)}.
\end{align*}
Absorb their fixed prefactors into $C'_\alpha$. This is exactly
\eqref{eq:intro-transfer-estimate}.
\end{proof}

\section{Completion on the standard three-torus}\label{sec:completion}

Section~\ref{sec:transfer} provides an abstract criterion for transferring exponential mixing through the singular conjugacy. We now verify that the present surgery satisfies the two quantitative
hypotheses of that criterion. This yields
exponential mixing for the smooth flow on \(N\).

It then remains to pass from the abstract manifold \(N\) to $\mathbb{T}^3$. Using the diffeomorphism \(N\cong\mathbb T^3\) obtained in Section~\ref{sec:topology} and a volume-normalizing smooth change of coordinates, we transport the flow to \(\mathbb T^3\) with normalized Lebesgue volume.
This completes the construction of the vector field in Theorem~\ref{thm:main}.

\subsection{Verification of the transfer assumptions}
\begin{proposition}
\label{prop:completion-bounds}
The surgery map satisfies the hypotheses
\eqref{eq:intro-small-mass}, \eqref{eq:intro-derivative} with
$S=\Gamma,\kappa=2$ and $p=1.$
\end{proposition}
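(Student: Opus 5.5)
Both estimates are local near $\Gamma$, so I will work in the solid-torus charts $\Psi_p$ of Proposition~\ref{prop:solid-torus-coordinates}. The first step is to compare $d_M(\cdot,\Gamma_p)$ with the chart radius $r=(x^2+y^2)^{1/2}$. The metric $d_M$ comes from a fixed smooth Riemannian metric, and $\Psi_p$ is a diffeomorphism on the closure of a slightly smaller tube $\overline{D_{R'}}\times\mathbb S^1$. By compactness, the pulled-back metric is therefore uniformly equivalent to the Euclidean product metric $dx^2+dy^2+ds^2$ there. Since $\Gamma_p=\{r=0\}$, this gives constants $c_0,C_0>0$ and a radius $r_\sharp>0$ with
\[
c_0\,r(x)\le d_M(x,\Gamma)\le C_0\,r(x)
\]
on $\{d_M(\cdot,\Gamma)<r_\sharp\}$. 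For $r_\sharp$ small, this set is contained in $\bigcup_p\Psi_p(D_{R'}\times\mathbb S^1)$, because the $U_p$ are disjoint open neighborhoods of the compact orbits.

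\emph{Small mass, $\kappa=2$.} For $r<r_*:=r_\sharp$, the set $\{d_M(\cdot,\Gamma)<r\}$ is contained in $\bigcup_p\Psi_p(\{r(x)<r/c_0\}\times\mathbb S^1)$. By \eqref{eq:solid-torus-normal-form}, $\Omega=dx\wedge dy\wedge ds$, so each such tube has $\Omega$-volume $\pi r^2/c_0^2$. Hence $\mu\{d_M(\cdot,\Gamma)<r\}\le 4\pi Z^{-1}c_0^{-2}r^2$, which is \eqref{eq:intro-small-mass}.

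\emph{Derivative bound, $p=1$.} Away from the tubes, $\Phi$ is the identity on the retained part. On the compact set $\{d_M(\cdot,\Gamma)\ge r_\sharp\}$, $\Phi$ is smooth, and the new tubes $\{r\le r_1\}$ sit inside $\{r<R\}$. So $\|D\Phi\|$ is bounded on this set, and since $d_M\le\operatorname{diam}M$, the bound $C\,d_M^{-1}$ holds trivially there. Inside the tubes I use the explicit formula \eqref{eq:coordinate-exchange} written in coordinates $(x,y,s)\mapsto(\xi,\eta,z)$. In these coordinates $\xi,\eta$ are smooth functions of $(r,s)$ with bounded derivatives, and $r$ is $1$-Lipschitz in $(x,y)$. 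The angle $z=-\theta$ satisfies $|\nabla_{(x,y)}\theta|=1/r$. Hence the Euclidean operator norm of the Jacobian is at most $C(1+r^{-1})\le C'r^{-1}$ for $r<R$.

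The new tube coordinates $(\xi,\eta,z)$ are a smooth chart on $N$ on the compact set $\{\xi^2+\eta^2\le r_1^2\}$, up to a little beyond $r_1$ by the chosen gluing structure. Therefore the metric $d_N$ is uniformly equivalent to $d\xi^2+d\eta^2+dz^2$ there. Combining this with the comparison $d_M\asymp r$ gives $\|D\Phi_x\|\le C_2\,d_M(x,\Gamma)^{-1}$, which is \eqref{eq:intro-derivative} with $p=1$.

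The main point to handle carefully is the overlap region $r_1\le r<R$, where $\Phi$ is the natural identification. There $D\Phi$ is bounded simply because $r$ is bounded below, so no singular behaviour arises. What must be checked is that the uniform metric-comparison constants hold on compact sets containing both regimes, so that the old and new chart estimates patch together. Finally, I need $\Gamma$ to be a nonempty closed embedded submanifold, which holds because it is a union of four disjoint periodic orbits.
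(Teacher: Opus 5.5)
Your proposal is correct and follows essentially the same route as the paper: you compare $g_M$ with $dx^2+dy^2+ds^2$ on compact tube closures to get $d_M(\cdot,\Gamma)\asymp r$; you use the exact tube volume $\pi\delta^2$ coming from $\Omega=dx\wedge dy\wedge ds$ to get $\kappa=2$; you bound the Jacobian of the coordinate exchange by $C/r$, driven by $|dz(v)|\le|v|/r$; and away from $\Gamma$ you use compactness together with $d_M\le\operatorname{diam}M$. The one step the paper spells out that you leave implicit is the lower bound $d_M\ge c_0 r$. The paper gets it from a curve argument: a curve that leaves the tube $\mathcal M_p(R_0)$ has length at least $c_0(R_0-r)$, which is at least $c_0r$ once $r<R_0/2$. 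For the same reason you should choose $r_\sharp$ so that $\{d_M<r_\sharp\}$ lies in a tube of half the comparison radius, and not merely inside $D_{R'}\times\mathbb S^1$ as you stated.
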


\begin{proof}
For \(p\in P\) and \(0<R'<R\), write
\[
\mathcal M_p(R')
=
\bigl\{(x,y,s)\text{ in the tube at }p:
x^2+y^2<R'^2\bigr\}.
\]
Choose $R_0,R_1$ such that
\[
0<R_1<\frac{R_0}{2}<R_0<R.
\]
Hence the closures of the four tubes
\(\mathcal M_p(R_0)\) are pairwise disjoint. For \(0<\delta<R_1\), let $U_\delta=\bigcup_{p\in P} \mathcal M_p(\delta)$. By \eqref{eq:solid-torus-normal-form} and the normalization in \eqref{eq:seed-volume-normalization}, we have $\mu(U_\delta)=\frac{4\pi}{Z}\delta^2$.

To obtain \eqref{eq:intro-small-mass}, we need to establish the bound between $r(q)=(x^2+y^2)^{1/2}$ and $d_M(q,\Gamma)$ for $q$ in these tubes. We claim that there exist $c_0>0,C_0>0$ such that 
\[
c_0 r(q) \leq d_M(q,\Gamma) \leq C_0 r(q).
\]
Let \(g_M\) and \(g_N\) denote the fixed smooth Riemannian metrics on \(M\) and \(N\), respectively. Let $g_{0}=dx^2+dy^2+ds^2$ denote the product metric in the tube coordinates on $M$. Since $\bigcup_{p\in P}\overline{\mathcal M_p(R_0)}$ is compact and the metric \(g_M\) is smooth and positive definite, there exist constants $0<c_0\le C_0<\infty$ such that, for every tangent vector \(v\) based in these tubes,
\[
c_0 |v|_{g_0}
\le
|v|_{g_M}
\le
C_0 |v|_{g_0}.
\]
Consequently, for every piecewise \(C^1\) curve \(\sigma\) contained in one of these tubes,
\[
c_0 L_{g_0}(\sigma)
\le
L_{g_M}(\sigma)
\le
C_0 L_{g_0}(\sigma).
\]
Here \(L_g(\sigma)\) denotes the length of the curve \(\sigma\) with respect to the Riemannian metric \(g\).
Fix $q=(x,y,s)\in \mathcal M_p(R_1),$
consider the radial curve
\[
\sigma_{\mathrm{rad}}(u)
:=
((1-u)x,(1-u)y,s),
\qquad
0\le u\le1.
\]
Then $\sigma_{\mathrm{rad}}(0)=q,
\sigma_{\mathrm{rad}}(1)\in\Gamma_p$ and $
L_{g_0}(\sigma_{\mathrm{rad}})=r.$
Therefore
\[
d_M(q,\Gamma)
\le
L_{g_M}(\sigma_{\mathrm{rad}})
\le
C_0 r.
\]
For the reverse inequality, let $\sigma$ be any piecewise \(C^1\) curve satisfying $\sigma(0)=q, \sigma(1)\in\Gamma$.
If $\sigma([0,1])\subset \mathcal M_p(R_0)$, writing
\[
\sigma(u)
=
(x(u),y(u),s(u)),
\qquad
w(u)=(x(u),y(u)),
\]
we have $|w(0)|=r,w(1)=0.$ Thus, we have
\[
L_{g_M}(\sigma)
\ge
c_0L_{g_0}(\sigma)
\ge
c_0\int_0^1 |\dot w(u)|\,du
\ge
c_0|w(1)-w(0)|
=c_0r.
\]
If \(\sigma\) leaves \(\mathcal M_p(R_0)\), define
\[
u_*=
\inf\{u\in[0,1]:
\sigma(u)\notin \mathcal M_p(R_0)\}.
\]
Then $|w(u_*)|=R_0$. Therefore,
\[
\begin{aligned}
L_{g_M}(\sigma)
&\ge
L_{g_M}\bigl(\sigma|_{[0,u_*]}\bigr)
\ge
c_0
\int_0^{u_*}|\dot w(u)|\,du \\
&\ge
c_0|w(u_*)-w(0)|
\geq
c_0(R_0-r)
\ge
c_0r,
\end{aligned}
\]
where the last inequality follows from
$r<R_1<\frac{R_0}{2}.$ Therefore,
\[
c_0r
\le
d_M(q,\Gamma)
\le
C_0r
\quad
\text{for }
q\in
\bigcup_{p\in P}\mathcal M_p(R_1).
\]

Set
\[
E
=
M\setminus
\bigcup_{p\in P}\mathcal M_p(R_1).
\]
Since \(E\) and \(\Gamma\) are disjoint compact sets,
\[
d_*
:=
d_M(E,\Gamma)
>0.
\]
Thus, for \(0<\varepsilon< \min \{d_*,c_0 R_1\}\), if $d_M(q,\Gamma)<\varepsilon$, then $q\in
\bigcup_{p\in P}\mathcal M_p(R_1)$. Using the lower bound above, we have $c_0r(q)<\varepsilon$. Therefore, 
\[
\mu\{q\in M:
d_M(q,\Gamma)<\varepsilon\}
\le
\mu(U_{\varepsilon/c_0})
=
\frac{4\pi}{Zc_0^2}\varepsilon^2.
\]
Thus \eqref{eq:intro-small-mass} holds with $\kappa=2.$

We next estimate \(D\Phi_q\) for $q \in M\setminus\Gamma$. For $q \in \mathcal M_p(R_1)\setminus\Gamma_p$, let $g_{0,N}=d\xi^2+d\eta^2+dz^2$ be the product metric in the new tube coordinates. For any $v\in T_qM,$ we have
\[
D\Phi_q(v)
=
d\xi(v)\,\partial_\xi
+
d\eta(v)\,\partial_\eta
+
dz(v)\,\partial_z,
\]
and hence
\begin{equation}\label{eq:dphiq}
|D\Phi_q(v)|_{g_{0,N}}^2
=
|d\xi(v)|^2
+
|d\eta(v)|^2
+
|dz(v)|^2.
\end{equation}
On a punctured old tube,
\[
r=(x^2+y^2)^{1/2},
\qquad
z=-\arg(x+iy),
\]
and
\[
\xi=r\cos(2\pi s),
\qquad
\eta=r\sin(2\pi s).
\]
Differentiation gives
\[
dr
=
\frac{x\,dx+y\,dy}{r},
\qquad
dz
=
\frac{y\,dx-x\,dy}{r^2},
\]
and
\[
d\xi
=
\cos(2\pi s)\,dr
-
2\pi r\sin(2\pi s)\,ds,
\]
\[
d\eta
=
\sin(2\pi s)\,dr
+
2\pi r\cos(2\pi s)\,ds.
\]
For $v=v_x\partial_x+v_y\partial_y+v_s\partial_s$, by Cauchy--Schwarz inequality, we have
\[
|dr(v)|
=
\left|
\frac{xv_x+yv_y}{r}
\right|
\le
(v_x^2+v_y^2)^{1/2}
\le
|v|_{g_0},
\]
and
\[
|dz(v)|
=
\left|
\frac{yv_x-xv_y}{r^2}
\right|
\le
\frac{(v_x^2+v_y^2)^{1/2}}{r}
\le
\frac{|v|_{g_0}}{r}.
\]
Moreover,
\[
|d\xi(v)|
\le
|dr(v)|+2\pi r|v_s|,
\quad
|d\eta(v)|
\le
|dr(v)|+2\pi r|v_s|.
\]
Substituting these estimates into \eqref{eq:dphiq} and shrinking \(R_1\) if necessary so that \(R_1\le1\), we have
\begin{equation}\label{ineq:dphiq}
|D\Phi_q(v)|_{g_{0,N}}
\le
\frac{C_1}{r(q)}
|v|_{g_0},
\quad
q\in
\mathcal M_p(R_1)\setminus\Gamma_p.
\end{equation}

As in the distance comparison above, smooth Riemannian metrics are uniformly equivalent to the corresponding product metrics on the closures of these fixed coordinate tubes. Thus there exist constants \(c_M,C_N>0\) such that
\[
|v|_{g_{0}}
\le
c_M^{-1}|v|_{g_M},
\qquad
|w|_{g_N}
\le
C_N|w|_{g_{0,N}}.
\]
Combining these inequalities with \eqref{ineq:dphiq}, we obtain
\[
|D\Phi_q(v)|_{g_N}
\le
\frac{C_NC_1}{c_M\,r(q)}|v|_{g_M}.
\]
Taking the supremum over \(v\neq0\) therefore gives
\begin{equation}\label{ineq:dphiq near gamma}
\|D\Phi_q\|
\le
\frac{C_2}{r(q)} \leq C_3\,d_M(q,\Gamma)^{-1}, 
\quad
q\in \mathcal M_p(R_1)\setminus\Gamma_p,
\end{equation}
where the second inequality holds by $d_M(q,\Gamma)\le C_0r(q)$.

Finally, since \(\Phi\) is smooth on the compact set $M':=M\setminus
\bigcup_{p\in P}\mathcal M_p(R_1/2),$
\[
B:=
\sup
\left\{
\|D\Phi_q\|:
q\in M'
\right\}
<\infty.
\]
Hence, for every $q \in M'$,
\[
\|D\Phi_q\|
\le B 
\le
B\,\operatorname{diam}(M)\,
d_M(q,\Gamma)^{-1}.
\]
Combining with \eqref{ineq:dphiq near gamma}, \eqref{eq:intro-derivative} holds with $p=1.$
\end{proof}

Proposition~\ref{prop:seed-volume} gives the invariant probability measure $\mu$ for the seed flow.
Propositions~\ref{prop:complement-map},
\ref{prop:new-vector-field}, \ref{prop:new-volume}
and \ref{prop:same-time-conjugacy}, together with the invariance of $\Gamma$ and $K$ established in Section~\ref{sec:surgery}, verify the dynamical assumptions of Theorem~\ref{thm:transfer}.
In particular, $\mu(\Gamma)=\nu(K)=0$.
Proposition~\ref{prop:completion-bounds} verifies the two quantitative assumptions with $(\kappa,p)=(2,1)$.
Applying Theorem~\ref{thm:transfer} together with the seed estimate in Proposition~\ref{prop:seed-mixing}
yields the following corollary.
\begin{corollary}
\label{cor:completion-mixing}
The flow of $Y$ on $(N,\nu)$ is exponentially mixing. If
$\gamma_\alpha$ is the seed exponent in
Proposition~\ref{prop:seed-mixing}, then
\begin{equation*}
 |\Corr_{\nu,\Psi}(f,g;t)|
 \le C'_\alpha e^{-\gamma_\alpha t/(1+\alpha)}
\|f\|_{C^\alpha(N)}\|g\|_{C^\alpha(N)}.
\end{equation*}
\end{corollary}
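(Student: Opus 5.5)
The corollary follows by applying Theorem~\ref{thm:transfer} with the flows $\varphi^t=\phi^t$ on $(M,\mu)$ and $\Psi^t$ on $(N,\nu)$, the singular sets $S=\Gamma$ and $K$ (the four new cores), and the complement map $\Phi$ of Proposition~\ref{prop:complement-map}. The plan is to check the hypotheses in order, citing the earlier results, and then substitute the exponents.

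\textbf{Structural hypotheses.} The set $\Gamma$ is a finite disjoint union of four embedded periodic orbits, so it is a nonempty closed embedded smooth submanifold without boundary. The set $K$ is a finite union of core circles, hence closed.

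\begin{enumerate}
\item \emph{Invariance and null sets.} $\Gamma$ and $K$ are invariant, as noted before Proposition~\ref{prop:same-time-conjugacy}. Both are finite unions of smooth curves in smooth-volume manifolds, so $\mu(\Gamma)=\nu(K)=0$.
\item \emph{Conjugacy.} $\Phi$ is a $C^\infty$ (in particular $C^1$) diffeomorphism $M\setminus\Gamma\to N\setminus K$ by Proposition~\ref{prop:complement-map}. It satisfies $\Phi\circ\phi^t=\Psi^t\circ\Phi$ by Proposition~\ref{prop:same-time-conjugacy}, and $\nu=\Phi_*\mu$ by Proposition~\ref{prop:new-volume}. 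The flows are complete and smooth, and they preserve the probability measures $\mu$ and $\nu$ by Propositions~\ref{prop:seed-volume}, \ref{prop:new-vector-field} and \ref{prop:new-volume}.
\item \emph{Quantitative bounds.} Proposition~\ref{prop:completion-bounds} gives \eqref{eq:intro-small-mass} and \eqref{eq:intro-derivative} with $\kappa=2$ and $p=1$.
\end{enumerate}

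\textbf{Seed estimate and conclusion.} The seed estimate \eqref{ineq:seed estimate} is exactly Proposition~\ref{prop:seed-mixing}, with the uniform constant $C_\alpha$ and exponent $\gamma_\alpha$. Theorem~\ref{thm:transfer} then gives decay at rate
\[
\frac{\kappa\gamma_\alpha}{\kappa+2p\alpha}=\frac{2\gamma_\alpha}{2+2\alpha}=\frac{\gamma_\alpha}{1+\alpha},
\]
which is the stated exponent. Since this holds for every $0<\alpha<1$ with a positive rate, the flow of $Y$ is exponentially mixing on $(N,\nu)$ in the sense of Definition~\ref{def:exponential-mixing}.

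The only step needing care is matching the metric conventions. The Hölder norms on $N$ in the corollary must be those of the same fixed smooth metric $g_N$ used in Proposition~\ref{prop:completion-bounds}, where $\|D\Phi_q\|$ was measured from $g_M$ to $g_N$. Any other choice of smooth metric on the compact manifold $N$ is uniformly equivalent to $g_N$, so changing it only alters constants. Everything else is bookkeeping.
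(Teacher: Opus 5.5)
Your proposal is correct and follows essentially the same route as the paper: you verify the hypotheses of Theorem~\ref{thm:transfer} with $S=\Gamma$ and the new cores $K$, citing Propositions~\ref{prop:complement-map}, \ref{prop:new-vector-field}, \ref{prop:new-volume}, \ref{prop:same-time-conjugacy} and \ref{prop:completion-bounds}. You then combine this with the seed estimate of Proposition~\ref{prop:seed-mixing} and substitute $(\kappa,p)=(2,1)$ to get the exponent $\gamma_\alpha/(1+\alpha)$. Your remarks that $\Gamma$ is an embedded submanifold, and that the H\"older norms on $N$ use the metric $g_N$ from Proposition~\ref{prop:completion-bounds}, are consistent with the paper and only make explicit details the paper leaves implicit.
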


\subsection{The final vector field}

The invariant probability $\nu$ on $N$ is smooth and positive, but an arbitrary diffeomorphism $N\to\mathbb T^3$ need not send it to normalized Lebesgue volume. We use Moser's volume-form theorem to correct this.

\begin{proposition} \cite[Theorem~2]{Moser}
\label{prop:volume-normalization}
Let $\omega_0$ and $\omega_1$ be smooth positive volume forms on
$\mathbb T^3$ satisfying
\[
\int_{\mathbb T^3}\omega_0
=
\int_{\mathbb T^3}\omega_1.
\]
Then there exists an orientation-preserving smooth diffeomorphism $F:\mathbb T^3\longrightarrow\mathbb T^3$
such that $F^*\omega_1=\omega_0.$ 
\end{proposition}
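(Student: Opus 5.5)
We prove the statement by Moser's path method. Write $\omega_t=(1-t)\omega_0+t\omega_1$ for $t\in[0,1]$. Each $\omega_t$ is a convex combination of positive volume forms, so it is again a smooth positive volume form on $\mathbb T^3$. Since $\int_{\mathbb T^3}(\omega_1-\omega_0)=0$ and $H^3_{\mathrm{dR}}(\mathbb T^3)\cong\R$ via integration, the closed $3$-form $\omega_1-\omega_0$ is exact: there is a smooth $2$-form $\beta$ with $d\beta=\omega_1-\omega_0$. This is the only topological input. One can even write $\beta$ explicitly by solving $\Delta u=$ (density of $\omega_1-\omega_0$) on the flat torus and taking $\beta=\star du$ up to sign, but de Rham's theorem suffices.

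We then look for an isotopy $F_t$ with $F_0=\mathrm{id}$ and $F_t^*\omega_t=\omega_0$ for all $t$. Differentiating, this holds if and only if
\[
F_t^*\Bigl(\mathcal L_{V_t}\omega_t+\tfrac{d}{dt}\omega_t\Bigr)=0,
\]
where $V_t$ is the time-dependent vector field generating $F_t$. By Cartan's formula and $d\omega_t=0$, we have $\mathcal L_{V_t}\omega_t=d(\iota_{V_t}\omega_t)$. It therefore suffices to solve
\[
\iota_{V_t}\omega_t=-\beta .
\]
Since $\omega_t$ is nondegenerate, the map $V\mapsto\iota_V\omega_t$ is a pointwise linear isomorphism from vectors to $2$-forms, smooth in $(t,x)$. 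Hence $V_t$ is uniquely determined and smooth in $(t,x)$.

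Because $\mathbb T^3$ is compact, the nonautonomous ODE generated by $V_t$ has solutions for all $t\in[0,1]$, and its flow $F_t$ consists of smooth diffeomorphisms depending smoothly on $t$. The computation above gives $\frac{d}{dt}F_t^*\omega_t=0$, so $F_t^*\omega_t=F_0^*\omega_0=\omega_0$. Setting $F:=F_1$ gives $F^*\omega_1=\omega_0$.

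Finally, $F$ is isotopic to the identity, so it preserves orientation. This also follows directly, since it pulls back the positive form $\omega_1$ to the positive form $\omega_0$.

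The main point needing care is the exactness of $\omega_1-\omega_0$, which is where the equal-total-mass hypothesis enters. The remaining steps are routine: completeness on a compact manifold and smooth dependence of the flow on parameters.
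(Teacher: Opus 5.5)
Your proof is correct. It is the standard Moser path method: interpolate $\omega_t=(1-t)\omega_0+t\omega_1$, take a primitive $\beta$ of $\omega_1-\omega_0$ using $H^3_{\mathrm{dR}}(\mathbb T^3)\cong\R$, and flow along $V_t$ defined by $\iota_{V_t}\omega_t=-\beta$. This is the deformation argument behind the cited theorem of Moser; the paper gives no proof of its own and simply quotes that result.
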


We now have all the ingredients needed for the final construction.
\begin{proposition}
\label{prop:completion-conjugacy}
There is an orientation-preserving diffeomorphism
$J:N\to\T^3$ such that $J_*\nu=m$. For any such $J$, define
\begin{equation}\label{eq:completion-final-field}
 X=J_*Y,\qquad \psi^t=J\circ\Psi^t\circ J^{-1}.
\end{equation}
Then $X$ is smooth and autonomous, and $\operatorname{div}_mX=0$.
It has exactly sixteen hyperbolic equilibria. Its flow satisfies \eqref{eq:intro-exponential} with
$c_\alpha=\gamma_\alpha/(1+\alpha)$.
\end{proposition}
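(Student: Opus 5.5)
First I construct $J$. Proposition~\ref{prop:top-torus} gives a diffeomorphism $J_0:N\to\T^3$. If $J_0$ reverses orientation, I compose it with the reflection $(x_1,x_2,x_3)\mapsto(-x_1,x_2,x_3)$ of $\T^3$, which is an orientation-reversing diffeomorphism preserving $m$. The orientation on $N$ is the one determined by the positive volume form $\Omega_N$ from Proposition~\ref{prop:new-volume}. Set $\omega_1:=(J_0^{-1})^*\nu$. By Proposition~\ref{prop:new-volume} it is a smooth positive volume form on $\T^3$ of total mass $1$. So are $\omega_0:=m$, the normalized Lebesgue form, with $\int\omega_0=\int\omega_1$. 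Proposition~\ref{prop:volume-normalization} then yields an orientation-preserving diffeomorphism $F$ of $\T^3$ with $F^*\omega_1=\omega_0$. I put $J:=F^{-1}\circ J_0$. Then $(J^{-1})^*\nu=(J_0^{-1}\circ F)^*\nu=F^*\omega_1=m$, that is, $J_*\nu=m$.

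Next, fix any such $J$ and define $X$ and $\psi^t$ as in \eqref{eq:completion-final-field}. The field $X$ is smooth because $Y$ is smooth on $N$ (Proposition~\ref{prop:new-vector-field}) and $J$ is a diffeomorphism. It is autonomous by construction, and its flow is $\psi^t$. Divergence-freeness follows from naturality of the Lie derivative:
\[
\mathcal L_X m=(J^{-1})^*\mathcal L_Y\nu=0,
\]
since $\nu$ is $\Psi^t$-invariant by Proposition~\ref{prop:new-volume}. Hence $\operatorname{div}_mX=0$.

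The equilibria of $X$ are the points $J(e_{p,k})$, and there are exactly sixteen of them. At each one, $DX(J e)=DJ_e\,DY(e)\,DJ_e^{-1}$. This conjugate has the same eigenvalues as $DY(e)$, so hyperbolicity transfers from Proposition~\ref{prop:new-equilibria}.

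For mixing, the change of variables $J_*\nu=m$ and $\psi^t\circ J=J\circ\Psi^t$ give
\[
\Corr_{m,\psi}(f,g;t)=\Corr_{\nu,\Psi}(f\circ J,g\circ J;t).
\]
Because $J$ is a diffeomorphism between compact Riemannian manifolds, it is bi-Lipschitz. Therefore $\|f\circ J\|_{C^\alpha(N)}\le \max\{1,\Lip(J)^\alpha\}\|f\|_{C^\alpha(\T^3)}$. Applying Corollary~\ref{cor:completion-mixing} gives \eqref{eq:intro-exponential} with $c_\alpha=\gamma_\alpha/(1+\alpha)$ and $C_\alpha=C'_\alpha\max\{1,\Lip(J)^{2\alpha}\}$.

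The main point needing care is the application of Moser's theorem, which needs matching orientations. This is why I normalize $J_0$ to be orientation-preserving before invoking Proposition~\ref{prop:volume-normalization}. The remaining steps are routine transport of structure.
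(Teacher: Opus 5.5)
Your proposal is correct and follows essentially the same route as the paper: you obtain $N\cong\T^3$, correct the volume with Moser's theorem, transport equilibria and their linearizations by conjugation, and pull the mixing estimate back from Corollary~\ref{cor:completion-mixing} using the Lipschitz bound for $J$. The differences are cosmetic. You swap the roles of $\omega_0,\omega_1$ and set $J=F^{-1}\circ J_0$ where the paper uses $J=F\circ D$. You make the orientation fix explicit via the reflection, which the paper only asserts. You check $\operatorname{div}_mX=0$ via naturality of $\mathcal L$ rather than via $(\psi^t)_*m=m$.
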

\begin{proof}
By Proposition~\ref{prop:top-torus}, we can choose a smooth and orientation-preserving diffeomorphism $D:N\to\T^3$. Then $D_*\nu=\rho_1\,m$ with a smooth positive density $\rho_1$. Apply Proposition~\ref{prop:volume-normalization} with $\omega_0=\rho_1 m, \omega_1=m$ and set $J=F\circ D$. This gives $J_*\nu=m$.

Define the vector field on \(\mathbb T^3\) by $X:=J_*Y$. That is,
\begin{equation}\label{eq:vector field x}
X(Jy):=D J_y\,Y(y),
\quad y\in N.
\end{equation}
Since \(J\) and \(Y\) are smooth and \(J\) is a diffeomorphism,
\(X\) is a smooth autonomous vector field on \(\mathbb T^3\).

We first identify its flow. Define
$\psi^t
=
J\circ \Psi^t\circ J^{-1}.$ For \(x=Jy\), we have
\[
\frac{d}{dt}\psi^t(x)
=
\frac{d}{dt}J(\Psi^t y)
=
D J_{\Psi^t y}\,Y(\Psi^t y)
=
X(J(\Psi^t y))
=
X(\psi^t(x)).
\]
Since \(\psi^0(x)=x\), uniqueness of solutions shows that
\(\psi^t\) is precisely the flow generated by \(X\).

Next we show that \(X\) is divergence-free with respect to normalized Lebesgue volume \(m\). Since \(J_*\nu=m\) and \(\Psi^t\) preserves \(\nu\), we have
\[
\begin{aligned}
(\psi^t)_*m
&=
(J\circ\Psi^t\circ J^{-1})_*m
=
J_*(\Psi^t)_*(J^{-1})_*m
\\
&=
J_*(\Psi^t)_*\nu
=
J_*\nu
=m.
\end{aligned}
\]
Thus the flow of \(X\) preserves \(m\), which is equivalent to $\operatorname{div}_m X=0$.

It remains to compare the equilibria of $Y$ and $X$. Since $X(J(y))=DJ(y)Y(y)$
and $DJ(y)$ is invertible,
\[
X(J(y))=0
\quad\Longleftrightarrow\quad
Y(y)=0.
\]
Thus $J$ gives a bijection between the equilibria of $Y$ and those of $X$.
By Proposition~\ref{prop:new-equilibria}, $X$ therefore has exactly sixteen equilibria. Moreover, let $y$ be an equilibrium of $Y$. Working in local coordinates around \(y\) and \(J(y)\), differentiating \eqref{eq:vector field x} at \(y\) gives 
\[
DX(J(y))
=
DJ(y)\,DY(y)\,DJ(y)^{-1}.
\]
Hence \(DX(J(y))\) and \(DY(y)\) are similar and have the same spectrum. Thus hyperbolicity is preserved under the change of coordinates.

Finally, we transfer the exponential mixing estimate. Let
\(f,g\in C^\alpha(\mathbb T^3)\). Since \(J_*\nu=m\) and $\psi^t\circ J=J\circ\Psi^t,$ a change of variables gives
\[
\operatorname{Corr}_{m,\psi}(f,g;t)
=
\operatorname{Corr}_{\nu,\Psi}
(f\circ J,g\circ J;t).
\]
Since \(J\) is smooth on the compact manifold \(N\), it is Lipschitz.
Hence, there exists a constant \(C_J\) depending only on \(J\) such that
\[
\|f\circ J\|_{C^\alpha(N)}
\le
C_J\|f\|_{C^\alpha(\mathbb T^3)}
\]
The same estimate holds for \(g\). Applying Corollary~\ref{cor:completion-mixing} yields
\[
\begin{aligned}
\bigl|
\operatorname{Corr}_{m,\psi}(f,g;t)
\bigr|
&\le
C'_\alpha
e^{-\gamma_\alpha t/(1+\alpha)}
\|f\circ J\|_{C^\alpha(N)}
\|g\circ J\|_{C^\alpha(N)}
\\
&\le
C''_\alpha
e^{-\gamma_\alpha t/(1+\alpha)}
\|f\|_{C^\alpha(\mathbb T^3)}
\|g\|_{C^\alpha(\mathbb T^3)}.
\end{aligned}
\]
Thus the flow of \(X\) is exponentially mixing on \((\mathbb T^3,m)\), with decay exponent
$c_\alpha=\frac{\gamma_\alpha}{1+\alpha}.$
This proves the required estimate.
\end{proof}

\begin{proof}[Proof of Theorem~\ref{thm:main}]
Propositions~\ref{prop:seed-volume} and~\ref{prop:seed-mixing}
give the seed flow and its exponential correlation bound.
Propositions~\ref{prop:new-vector-field}--\ref{prop:new-equilibria}
construct the new smooth volume-preserving flow and its sixteen
hyperbolic equilibria. Proposition~\ref{prop:top-torus} identifies its manifold with $\T^3$. Corollary~\ref{cor:completion-mixing}
and Proposition~\ref{prop:completion-conjugacy} then give the claimed field on the standard torus with normalized Lebesgue volume.
\end{proof}

\appendix
\section{H\"older--Sobolev transfer for exponential mixing}
\label{app:regularity-transfer}

\begin{lemma}
\label{lem:holder-to-sobolev}
Let \(\mathbb T^d\) be equipped with its standard flat metric and normalized Lebesgue measure \(m\). Let
$\varphi^t:\mathbb T^d\to\mathbb T^d$ be a measurable measure-preserving flow. Fix $0<\alpha<1$ and assume that there exist
constants $C_\alpha>0,
\gamma_\alpha>0$
such that
\begin{equation}
\label{eq:holder-mixing-assumption}
\left|
\operatorname{Corr}_{m,\varphi}(F,G;t)
\right|
\le
C_\alpha e^{-\gamma_\alpha t}
\|F\|_{C^\alpha(\mathbb T^d)}
\|G\|_{C^\alpha(\mathbb T^d)}
\end{equation}
for every $F,G\in C^\alpha(\mathbb T^d)$ and every $t\ge0$.

Then, for every $s>0$, there exist constants $C_{s,\alpha,d}>0,
\gamma_{s,\alpha,d}>0$ such that
\begin{equation}
\label{eq:sobolev-mixing-conclusion}
\left|
\operatorname{Corr}_{m,\varphi}(f,g;t)
\right|
\le
C_{s,\alpha,d}
e^{-\gamma_{s,\alpha,d}t}
\|f\|_{H^s(\mathbb T^d)}
\|g\|_{H^s(\mathbb T^d)}
\end{equation}
for every $f,g\in H^s(\mathbb T^d)$ and every $t\ge0$.
\end{lemma}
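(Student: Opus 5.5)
We use a Fourier cutoff (mollification) argument, balancing a smooth approximation error in $L^2$ against the growth of its H\"older norm, in the same way as the cutoff balance in Theorem~\ref{thm:transfer}. Since correlations are unchanged by adding constants, we may assume $m(f)=m(g)=0$. For $N\ge1$ let $P_N$ denote the Fourier projection onto frequencies $|k|\le N$. Write $f_N=P_Nf$ and $g_N=P_Ng$; these are trigonometric polynomials, hence smooth.

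\emph{Step 1: approximation error.} By Plancherel,
\[
\|f-f_N\|_{L^2}\le N^{-s}\|f\|_{H^s}.
\]
Since $\varphi^t$ preserves $m$, the Koopman operator is an isometry of $L^2$. Expanding the bilinear form, we get
\[
|\Corr(f,g;t)-\Corr(f_N,g_N;t)|\le 2N^{-s}\|f\|_{H^s}\|g\|_{H^s}.
\]
The mean terms need no separate treatment, because $f_N$ and $g_N$ still have zero mean.

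\emph{Step 2: H\"older norm of the truncation.} We bound $\|f_N\|_{C^\alpha}\le\|f_N\|_{C^1}$, up to a constant factor. Using $\|\partial^j f_N\|_\infty\le\sum_{|k|\le N}|k|^j|\hat f(k)|$ and Cauchy--Schwarz, we obtain
\[
\sum_{|k|\le N}|k|^j|\hat f(k)|\le\Bigl(\sum_{|k|\le N}|k|^{2j}(1+|k|^2)^{-s}\Bigr)^{1/2}\|f\|_{H^s}\le CN^{q}\|f\|_{H^s},
\]
with $q=\max\{0,\,1+d/2-s\}+1$, say. The extra $1$ in $q$ is there to absorb logarithmic factors; we only need some polynomial power $N^{q}$ with $q$ depending on $s,d$. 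Applying \eqref{eq:holder-mixing-assumption} to $f_N,g_N$ then gives
\[
|\Corr(f_N,g_N;t)|\le C C_\alpha e^{-\gamma_\alpha t}N^{2q}\|f\|_{H^s}\|g\|_{H^s}.
\]

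\emph{Step 3: optimization.} We choose $N=\lceil e^{\gamma_\alpha t/(s+2q)}\rceil$. Then both bounds are $O\bigl(e^{-s\gamma_\alpha t/(s+2q)}\bigr)\|f\|_{H^s}\|g\|_{H^s}$, which gives \eqref{eq:sobolev-mixing-conclusion} with $\gamma_{s,\alpha,d}=s\gamma_\alpha/(s+2q)$.

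The only genuinely delicate point is Step 2. Low $s$ (where $H^s$ does not embed in $C^\alpha$) is exactly why truncation is needed, and we must make sure the polynomial growth in $N$ is uniform. The Cauchy--Schwarz count of lattice points, $\#\{|k|\le N\}\lesssim N^d$, handles this crudely but adequately. Measurability of the flow suffices throughout, since we only use the invariance of $m$ and the $L^2$ isometry.
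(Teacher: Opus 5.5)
Your proof is correct and follows essentially the same route as the paper: Fourier truncation $P_N$, the $L^2$ error $N^{-s}$ controlled through the $L^2$-isometry of the Koopman operator, a polynomial-in-$N$ bound on $\|f_N\|_{C^\alpha}$, and the exponential choice of $N$ in terms of $t$. The only difference is in Step 2. The paper uses the unweighted lattice-point count in terms of $\|f\|_{L^2}$ together with the interpolation $[h]_{C^\alpha}\le 2^{1-\alpha}\|h\|_{L^\infty}^{1-\alpha}\|\nabla h\|_{L^\infty}^{\alpha}$, which gives $N^{d/2+\alpha}$ and the rate $s\gamma_\alpha/(s+d+2\alpha)$. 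Your $H^s$-weighted Cauchy--Schwarz combined with the $C^1$ bound gives a different but equally valid exponent.
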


\begin{proof}
Here, we adopt the following notation:
\[
\widehat f(k)
:=
\int_{\mathbb T^d}f(x)e^{-2\pi i k\cdot x}\,dm(x),
\quad
\|f\|_{H^s}^2
:=
\sum_{k\in\mathbb Z^d}
(1+4\pi^2|k|^2)^s|\widehat f(k)|^2.
\]
We may assume $
\int_{\mathbb T^d}f\,dm
=
\int_{\mathbb T^d}g\,dm
=0.$ For every $h\in L^2(m)$, define $U_t h:=h\circ\varphi^t.$ Then, for every $t \geq 0$, $\|U_t h\|_{L^2(m)}=\|h\|_{L^2(m)}$.
In particular,
\[
\operatorname{Corr}_{m,\varphi}(f,g;t)
=
\langle f,U_tg\rangle_{L^2(m)}.
\]
For $N\ge1$, define the Fourier truncation
\[
P_N f(x)
:=
\sum_{k\in\mathbb Z^d,|k|\le N}
\widehat f(k)e^{2\pi i k\cdot x}.
\]
Since $P_N$ leaves the zero mode unchanged, it preserves the mean. Let $f_N:=P_Nf, g_N:=P_Ng.$ Parseval's identity gives
\begin{equation}\label{eq:fourier-approximation}
\|f-f_N\|_{L^2}^2
=
\sum_{|k|>N}|\widehat f(k)|^2
\le
(2\pi N)^{-2s}
\sum_{|k|>N}
(2\pi|k|)^{2s}
|\widehat f(k)|^2
\le
(2\pi N)^{-2s}
\|f\|_{H^s}^2.
\end{equation}
The same estimate holds for $g$. Since
\[
\#\{k\in\mathbb Z^d:|k|\le N\}
\le
(2N+1)^d
\le
(3N)^d,
\qquad N\ge1,
\]
the Cauchy--Schwarz inequality and Parseval's identity imply
\begin{equation}\label{eq:fourier-Linfty}
\|f_N\|_{L^\infty}
\le
\sum_{|k|\le N}|\widehat f(k)|
\le
(3N)^{\frac{d}{2}}
(\sum_{|k|\le N}|\widehat f(k)|^2)^{1/2} \le
3^{\frac{d}{2}}N^{\frac{d}{2}}\|f\|_{L^2}.
\end{equation}
Similarly,
\begin{equation}\label{eq:fourier-gradient}
\|\nabla f_N\|_{L^\infty}
\le 2\pi
\sum_{|k|\le N}|k|\,|\widehat f(k)|
\le
2\pi\,3^{\frac{d}{2}}
N^{\frac{d}{2}+1}
\|f\|_{L^2}.
\end{equation}

For every $h \in C^1(\mathbb T^d)$, we have
\[
|h(x)-h(y)|
\le
\min
\left\{
2\|h\|_{L^\infty},
\|\nabla h\|_{L^\infty}d_{\mathbb{T}^d}(x,y)
\right\}.
\]
Since $
\min\{a,b\}
\le
a^{1-\alpha}b^\alpha$ holds for $a,b>0$ and $0<\alpha<1$, we deduce that
\begin{equation}
\label{eq:holder-interpolation}
[h]_{C^\alpha}
\le
2^{1-\alpha}
\|h\|_{L^\infty}^{1-\alpha}
\|\nabla h\|_{L^\infty}^{\alpha}.
\end{equation}
Applying \eqref{eq:fourier-Linfty} and
\eqref{eq:fourier-gradient} to
\eqref{eq:holder-interpolation} yields
\begin{equation*}
[f_N]_{C^\alpha}
\leq 
2^{1-\alpha}(2\pi)^\alpha
3^{\frac{d}{2}}
N^{\frac{d}{2}+\alpha}
\|f\|_{L^2}.
\end{equation*}
Combining this with \eqref{eq:fourier-Linfty}, we obtain
\begin{equation}
\label{eq:fourier-holder}
\|f_N\|_{C^\alpha}
\le
A_{d,\alpha}
N^{\frac{d}{2}+\alpha}
\|f\|_{L^2}
\leq A_{d,\alpha}
N^{\frac{d}{2}+\alpha}
\|f\|_{H^s},
\end{equation}
where $A_{d,\alpha}=3^{\frac{d}{2}}(
1+2^{1-\alpha}(2\pi)^\alpha
).$ The same estimates hold for $g_N$.

We now decompose the correlation. Using \eqref{eq:fourier-approximation}, we obtain
\begin{align*}
\left|
\langle f,U_tg\rangle_{L^2}
-
\langle f_N,U_tg_N\rangle_{L^2}
\right|
& = \left| \langle f-f_N,U_tg\rangle_{L^2}
+
\langle f_N,U_t(g-g_N)\rangle_{L^2}  \right| \nonumber\\
&\le
\|f-f_N\|_{L^2}\|g\|_{L^2}
+
\|f_N\|_{L^2}\|g-g_N\|_{L^2}
\nonumber\\
&\le
2(2\pi)^{-s}
N^{-s}
\|f\|_{H^s}
\|g\|_{H^s}.
\end{align*}
Hence
\begin{equation}
\label{eq:correlation-approx-error}
\left|
\operatorname{Corr}_{m,\varphi}(f,g;t)
-
\operatorname{Corr}_{m,\varphi}(f_N,g_N;t)
\right|
\le
2(2\pi)^{-s}
N^{-s}
\|f\|_{H^s}\|g\|_{H^s}.
\end{equation}
For the truncated functions, the assumed H\"older mixing estimate
\eqref{eq:holder-mixing-assumption} and
\eqref{eq:fourier-holder} give
\begin{align*}
\left|
\operatorname{Corr}_{m,\varphi}(f_N,g_N;t)
\right|
&\le
C_\alpha e^{-\gamma_\alpha t}
\|f_N\|_{C^\alpha}
\|g_N\|_{C^\alpha}
\nonumber\\
&\le
C_\alpha A_{d,\alpha}^2
e^{-\gamma_\alpha t}
N^{d+2\alpha}
\|f\|_{H^s}
\|g\|_{H^s}.
\end{align*}
Combining this with
\eqref{eq:correlation-approx-error}, we obtain
\begin{equation}
\left|
\operatorname{Corr}_{m,\varphi}(f,g;t)
\right|
\le
\Big[
2(2\pi)^{-s}N^{-s}
+
C_\alpha A_{d,\alpha}^2
e^{-\gamma_\alpha t}
N^{d+2\alpha}
\Big]
\|f\|_{H^s}
\|g\|_{H^s}.
\label{eq:before-balancing}
\end{equation}

Set $q:=s+d+2\alpha,
N(t):=\left\lceil e^{\frac{\gamma_\alpha t}{q}}\right\rceil.$ 
Since $
e^{\frac{\gamma_\alpha t}{q}}
\le N(t)
\le 2e^{\frac{\gamma_\alpha t}{q}}$, 
we have
\[
N(t)^{-s}
\le e^{-s\frac{\gamma_\alpha t}{q}},
\qquad
e^{-\gamma_\alpha t}N(t)^{d+2\alpha}
\le
2^{d+2\alpha}e^{-s\frac{\gamma_\alpha t}{q}}.
\]
Substituting these estimates into \eqref{eq:before-balancing} gives
\[
\left|
\operatorname{Corr}_{m,\varphi}(f,g;t)
\right|
\le
C_{s,\alpha,d}
e^{-\gamma_{s,\alpha,d}t}
\|f\|_{H^s}
\|g\|_{H^s},
\]
where
\[
\gamma_{s,\alpha,d}
=
\frac{s}{s+d+2\alpha}\gamma_\alpha,
\quad
C_{s,\alpha,d}
=
2(2\pi)^{-s}
+
2^{d+2\alpha}C_\alpha A_{d,\alpha}^{\,2}.
\]
This proves \eqref{eq:sobolev-mixing-conclusion}.
\end{proof}

\paragraph{Acknowledgements}
Nianci Jiang acknowledges support from the Key Laboratory of Random Complex Structures and Data Science, Academy of
Mathematics and Systems Science, Chinese Academy of Sciences, and the National Natural Science Foundation of China
(Grant No.12090010, 12090014, 12471138).
Weili Zhang acknowledges support from the New Cornerstone Investigator Program led by Jinxin Xue.

\paragraph{AI Declaration}
The authors used ChatGPT as an auxiliary tool in this work. The authors verified and completed all mathematical arguments and take full responsibility for the content of this paper.

\end{document}